\newtheorem{theorem}{Theorem}[section]
\newtheorem{lemma}[theorem]{Lemma}
\newtheorem{proposition}[theorem]{Proposition}
\theoremstyle{corollary}
\newtheorem{corollary}[theorem]{Corollary}
\newtheorem{problem}[theorem]{Problem}
\theoremstyle{definition} 
\newtheorem{definition}[theorem]{Definition}
\newtheorem{definition-lemma}[theorem]{Definition-Lemma}
\newtheorem{example}[theorem]{Example}
\newtheorem{remark}[theorem]{Remark}
\theoremstyle{remark}
\numberwithin{equation}{section}
\newcommand{\R}{\mathbb{R}}
\newcommand{\Z}{\mathbb{Z}}
\newcommand{\N}{\mathbb{N}}
\newcommand{\Q}{\mathbb{Q}}
\newcommand{\mc}{\mathcal}
\def\P{\mathbb{P}}
\def\B{\operatorname{\bf{B}}}  
\def\Pic{\operatorname{Pic}}
\def\mult{\operatorname{mult}}
\def\sigmanum{\sigma_{\text{num}}}
\def\Cent{\operatorname{Cent}}
\def\Supp{\operatorname{Supp}}
\def\oa{\overline{a}}
\def\nnef{\operatorname{Nnef}}
\def\nklt{\operatorname{Nklt}}
\def\pnklt{\operatorname{pNklt}}
\def\epnklt{\epsilon\operatorname{-pNklt}}
\def\spnklt{\operatorname{spNklt}}
\def\espnklt{\epsilon\operatorname{-spNklt}}
\title[Potentially non-klt locus and its applications]{Potentially non-klt locus and its applications}
\begin{document}

\author{Sung Rak Choi}
\address{Center for Geometry and Physics, Institute for Basic Science (IBS), Pohang 37673, Republic of Korea}
\curraddr{Department of Mathematics, Yonsei University, Seoul 03722, Republic of Korea}
\email{sungrakc@yonsei.ac.kr}

\author{Jinhyung Park}
\address{School of Mathematics, Korea Institute for Advanced Study (KIAS), Seoul 02455, Republic of Korea}
\email{parkjh13@kias.re.kr}


\subjclass[2010]{Primary 14E30, Secondary 14J17, 14M22, 14J45.}
\date{\today}
\keywords{singularity of a pair, non-klt locus, variety of Fano type, rational connectedness, Zariski decomposition.}

\begin{abstract}
We introduce the notion of potentially klt pairs for normal projective varieties with pseudoeffective anticanonical divisor.
The potentially non-klt locus is a subset of $X$ which is birationally transformed precisely into the non-klt locus on a $-K_X$-minimal model of $X$.
We prove basic properties of potentially non-klt locus in comparison with those of classical non-klt locus.
As applications, we give a new characterization of varieties of Fano type,
and we also improve results on the rational connectedness of uniruled varieties with pseudoeffective anticanonical divisor.
\end{abstract}

\maketitle
\tableofcontents

\section{Introduction}
Fano varieties appear naturally in various areas such as algebraic geometry, representation theory, and theoretical physics. It is well known that varieties of Fano type share many of the properties of Fano varieties. Furthermore, they play very important roles in birational geometry, and the characterization problem has attracted considerable attentions in a variety of flavors (see e.g., \cite{SS}, \cite{B}, \cite{GOST}, \cite{HP2}, \cite{CG}, \cite{CHP}).
Note that a smooth projective surface $S$ is of Fano type if and only if $-K_S$ is big and $(S, N)$ is a klt pair where $-K_S = P+N$ is the Zariski decomposition. One may wonder whether the analogous characterization of Fano type varieties via Zariski decompositions exists in higher dimensions.
An obvious obstruction is the fact that the Zariski decompositions do not exist in general
(see e.g., \cite{Cu}, \cite{Na}, \cite{John}).
Among the various generalizations of Zariski decompositions, the \emph{divisorial Zariski decompositions} have the feature that they always exist.
It is tempting to expect for a variety $X$ to be of Fano type if $-K_X$ is big and $(X,N)$ is klt where $N$ is the negative part of the divisorial Zariski decomposition of $-K_X$.
However, it is false in general even if $X$ is smooth, $N=0$, and we can run the $-K_X$-minimal model program.
Any end product of this program has nef and big anticanonical divisor, but the resulting model can contain singularities worse than klt or even lc singularities  (see \cite[Example 5.1]{CG} and Example \ref{semifano example}).
One can check that such a variety $X$ is not of Fano type.
We note that the $-K_X$-minimal model program modifies the locus where $-K_X$ is not nef, but if such locus is deeply embedded in $X$, then the modification creates bad singularities.
To remedy this situation we define the potentially klt pairs as a variant of klt pairs.

In this paper, we work over an algebraically closed field of characteristic zero.

\begin{definition}
Let $(X, \Delta)$ be a pair such that $-(K_X+\Delta)$ is a pseudoeffective $\R$-Cartier divisor. For a log resolution $f : Y \to X$ of $(X,\Delta)$ and for a prime divisor $E$ on $Y$, we define the \emph{potential discrepancy} of $(X,\Delta)$ at $E$ as
$$
\oa(E; X,\Delta):=a(E;X,\Delta)-\mult_E N
$$
where $a(E;X,\Delta)$ denotes the usual discrepancy and $-f^*(K_X + \Delta)=P+N$ is the divisorial Zariski decomposition.
We call $(X, \Delta)$  a \emph{potentially klt} (resp. \emph{potentially lc}) pair if
$$
\mathfrak{A}(X, \Delta):=\inf_{f,E} \{ \oa(E; X, \Delta) \} >-1 ~~(\text{resp.} \geq -1)
$$
where $\inf$ is taken over the log resolutions $f:Y\to X$ of $(X, \Delta)$ and prime divisors $E$ on $Y$.
The \emph{potentially non-klt locus} $\pnklt(X,\Delta)$ is defined as
$$
\pnklt(X, \Delta) := \bigcap_{\epsilon>0} \espnklt(X, \Delta)
$$
where $\espnklt(X, \Delta) :=  \bigcup_{f, E}\{ \Cent_X E\;|\;\oa(E; X, \Delta)  \leq -1+\epsilon\}$.
\end{definition}

Note that the potential discrepancies $\oa(E; X,\Delta)$
not only measure the singularities of the pair $(X,\Delta)$ but also capture the positivity of $-(K_X+\Delta)$.
Thus their behavior is more subtle in nature than the usual singularities of pairs.
However, it turns out that the potentially klt, lc pairs share many similar properties
of the usual klt, lc pairs (see Remarks \ref{rmk-subtle1} and \ref{rmk-subtle2}).
Furthermore, unlike the typical singularities which reflect the local properties,
our definition of potential pairs encode the global property of the variety as well.

One of the basic properties of potentially non-klt locus $\pnklt(X,\Delta)$
(which is actually the motivation of the definition) is
that it is precisely the locus on $X$ that becomes the non-klt locus on a $-(K_X+\Delta)$-minimal model of $(X,\Delta)$ (see Corollary \ref{cor-MMP on pnklt}).
For this reason, they are said to be \emph{potential}.
We refer to Sections \ref{sec-pklt} and \ref{sec-pnklr locus} for more details on
the potential pairs.


As the first application of potentially klt pairs, we give a new characterization of Fano type varieties
using the divisorial Zariski decompositions.

\begin{theorem}[=Corollary \ref{cor-FT=pklt(X,N)}]\label{main thrm1}
A smooth projective variety $X$ is of Fano type if and only if $-K_X$ is big and $(X, N)$ is potentially klt where $-K_X=P+N$ is the divisorial Zariski decomposition.
\end{theorem}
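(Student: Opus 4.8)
The plan is to prove both implications by relating the potentially klt condition on $(X,N)$ to the singularities of a $-K_X$-minimal model. First, I would recall the characterization of Fano type varieties that is presumably established earlier in the paper (or follows from \cite{GOST}, \cite{B}): $X$ is of Fano type if and only if $-K_X$ is big and there exists an effective $\R$-divisor $\Delta$ with $-K_X \sim_\R \Delta$ such that $(X,\Delta)$ is klt. So for the "only if" direction, assuming $X$ is of Fano type, I would use that $-K_X$ is big, write the divisorial Zariski decomposition $-K_X = P+N$, and show $(X,N)$ is potentially klt. The key point is that for a Fano type variety the $-K_X$-MMP terminates with a model $X'$ on which $-K_{X'}$ is nef and big, and $X'$ is still of Fano type, hence klt; by the motivational property recorded in Corollary \ref{cor-MMP on pnklt}, the potentially non-klt locus of $(X,N)$ maps to the non-klt locus of $X'$, which is empty, so $\mathfrak{A}(X,N) > -1$ and $(X,N)$ is potentially klt.

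For the "if" direction, which I expect to be the substantive half, I would assume $-K_X$ is big and $(X,N)$ is potentially klt, and produce an effective $\Delta \sim_\R -K_X$ with $(X,\Delta)$ klt. The idea is to run the $-K_X$-MMP; since $-K_X$ is big this program terminates (this requires invoking the relevant MMP results, e.g. \cite{BCHM}-type existence of minimal models when the divisor is big) at a model $\pi : X \dashrightarrow X'$ on which $-K_{X'} = \pi_*(-K_X)$ is nef and big. Because $(X,N)$ is potentially klt, Corollary \ref{cor-MMP on pnklt} (the non-klt locus on the minimal model equals the birational transform of $\pnklt(X,N)$, which is empty) tells us $X'$ is klt. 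Then $-K_{X'}$ nef and big plus klt singularities gives, by Kawamata–Shokurov base-point-freeness and general position of a member of a large multiple, an effective $\Delta' \sim_\R -K_{X'}$ with $(X', \Delta')$ klt, so $X'$ is of Fano type. Finally, since Fano type is preserved under the inverse birational maps occurring in a $-K_X$-MMP (the steps are $-K_X$-negative, so one pulls back and uses negativity of contraction to keep klt-ness and effectivity), $X$ itself is of Fano type.

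The main obstacle is making the last pull-back step rigorous: one must check that a klt boundary $\Delta' \sim_\R -K_{X'}$ pulls back through the (inverse of the) divisorial contractions and flips of the $-K_X$-MMP to a klt boundary $\Delta \sim_\R -K_X$ on $X$. For a divisorial contraction $g : X_i \to X_{i+1}$ contracting $E$, the negativity lemma applied to $-K_{X_i} = g^*(-K_{X_{i+1}}) + aE$ with $a > 0$ (the step being $-K$-negative) shows that if $\Delta_{i+1}$ is klt then its strict transform plus a suitable multiple of $E$ is klt and still $\R$-linearly equivalent to $-K_{X_i}$; flips are handled analogously since they are isomorphisms in codimension one and do not affect the discrepancy comparison. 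Threading this through all steps of the MMP, together with the observation that $\pnklt(X,N) = \emptyset$ is exactly what guarantees no bad singularities are created along the way, completes the argument; the technical care is entirely in bookkeeping the discrepancies, which is where I would expect to spend the most effort. One should also double-check the edge behavior when $N \neq 0$: the potentially klt hypothesis on $(X,N)$ rather than on $X$ is precisely what compensates for the negative part, so the reduction to the $N=0$ case on the minimal model must be done via the potential discrepancies, not the ordinary ones.
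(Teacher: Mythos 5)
The central step of your ``if'' direction --- ``run the $-K_X$-MMP; since $-K_X$ is big this program terminates\dots at a model $X'$ on which $-K_{X'}$ is nef and big'' --- is a genuine gap, and in fact is essentially circular. The results of \cite{BCHM} give a $(-K_X)$-MMP only after writing $-K_X \sim_{\R} K_X+\Delta$ for an effective divisor $\Delta \sim_{\R} -2K_X$ with $(X,\Delta)$ klt; but the existence of such a $\Delta$ already exhibits $(X,\tfrac12\Delta)$ as a klt Calabi--Yau pair with $-K_X$ big, i.e.\ it already proves $X$ is of Fano type. For a general smooth $X$ with $-K_X$ big one cannot run a $(-K_X)$-MMP (this is exactly why Example \ref{semifano example} has to assume $X$ is a Mori dream space). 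The paper avoids the MMP entirely: the substantive input is Proposition \ref{prop-pnkltQ}, which directly constructs an effective $D\sim_{\Q}-(K_X+\Gamma)$ with $\nklt(X,\Gamma+D)=\pnklt(X,\Gamma)$ by comparing the fixed parts $E_m$ of $|{-m(K_X+\Gamma)}|$ with the negative part $N$ via asymptotic multiplier ideals (\cite[Theorem 11.2.21]{posII}). With $\pnklt(X,\Gamma)=\emptyset$ this yields a klt Calabi--Yau pair with $-K_X$ big, hence Fano type (Theorem \ref{thrm-charft}, (4)$\Rightarrow$(1)), with no minimal model needed. Your proposal contains no substitute for this construction, so the hard half of the theorem is missing.

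Two smaller points. First, your appeal to Corollary \ref{cor-MMP on pnklt} for the pair $(X,N)$ along a $-K_X$-MMP is a mismatch: that corollary requires the map to be $-(K_X+N)$-negative, i.e.\ $P$-negative, which a $-K_X$-negative step need not be; you would have to apply it to $(X,0)$ instead (using Lemma \ref{lem-oa>oa} to pass from $(X,N)$ to $(X,0)$). Second, the ``only if'' direction does not need any MMP: if $(X,\Delta)$ is klt with $-(K_X+\Delta)$ ample, then $\Delta\geq N$ by the maximality of the positive part (Lemma \ref{lem-divzd} (2)), and the monotonicity $\mathfrak A(X,N)\geq\mathfrak A(X,\Delta)=\inf_\sigma a(\sigma;X,\Delta)>-1$ from Lemma \ref{lem-oa>oa} finishes it in two lines; this is the paper's argument.
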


Note that a Fano type variety is nothing but a klt Calabi-Yau type variety with big anticanonical divisor.
It is natural to ask whether Theorem \ref{main thrm1} can be generalized to varieties of klt Calabi-Yau type.
Unfortunately, there exists a potentially klt surface $S$ which is not of Calabi-Yau type (see Example \ref{nikex}).
Despite this inconvenience, we can still obtain an analogous statement to Theorem \ref{main thrm1}
using a variation of potential pairs which is defined with the $s$-decomposition (see Theorem \ref{thrm-charcy}).

We next apply the theory of potentially klt pairs to the rational connectedness of uniruled varieties with pseudoeffective anticanonical divisor.

\begin{theorem}[=Theorems \ref{thrm-RC via pnklt} and \ref{thrm-RC pseff}]\label{main thrm2}
Let $X$ be a normal projective variety such that $-(K_X+\Delta)$ is $\R$-Cartier for some effective $\R$-divisor $\Delta$.
\begin{enumerate}
 \item[$(1)$] If $-(K_X + \Delta)$ is big, then $X$ is rationally  connected modulo $\pnklt(X, \Delta)$.
 \item[$(2)$] If $-(K_X + \Delta)$ is pseudoeffective and $\Delta \neq 0$, then $X$ is rationally  connected modulo either $\pnklt(X, \Delta)$ or every irreducible component of $\Delta$.
\end{enumerate}
\end{theorem}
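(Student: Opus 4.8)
The plan is to reduce both statements to the classical theorem of Hacon--McKernan and Zhang that a projective variety $X$ with klt singularities and $-(K_X+\Delta)$ big (resp. nef) is rationally connected, via a $-(K_X+\Delta)$-minimal model program together with the characterization of $\pnklt$ in Corollary \ref{cor-MMP on pnklt}. First I would take a log resolution $f:Y\to X$ of $(X,\Delta)$ and run a $-(K_Y+\Delta_Y)$-MMP, where $\Delta_Y$ is chosen so that $(Y,\Delta_Y)$ is klt and the MMP terminates (here one uses the assumption that $-(K_X+\Delta)$ is pseudoeffective, together with the relevant existence/termination results invoked earlier in the paper for constructing $-(K+\Delta)$-minimal models). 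Let $\phi:Y\dashrightarrow X'$ be the resulting birational contraction to a $-(K_{X'}+\Delta')$-minimal model $(X',\Delta')$; on $X'$ the divisor $-(K_{X'}+\Delta')$ is nef, and by the defining property of the potentially non-klt locus, the non-klt locus $\nklt(X',\Delta')$ is precisely the image on $X'$ of $\pnklt(X,\Delta)$, i.e. $\pnklt(X,\Delta)$ maps birationally onto $\nklt(X',\Delta')$ under the induced rational map $X\dashrightarrow X'$.

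For part (1), since $-(K_X+\Delta)$ is big the model $X'$ has $-(K_{X'}+\Delta')$ nef and big. Applying the rational connectedness result of Hacon--McKernan/Zhang to the klt open locus $X'\setminus\nklt(X',\Delta')$ — more precisely, the statement that such a variety is rationally connected modulo its non-klt locus — gives that $X'$ is rationally connected modulo $\nklt(X',\Delta')$. Rational connectedness modulo a subset is preserved under the birational maps relating $X$, $Y$, and $X'$ (rational curves transform to rational curves, and the exceptional loci are absorbed into the non-klt locus or are themselves rationally connected fibers), so $X$ is rationally connected modulo the preimage of $\nklt(X',\Delta')$, which is $\pnklt(X,\Delta)$. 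I would be careful here to track that the indeterminacy locus of $X\dashrightarrow X'$ does not cause trouble: resolving $\phi$ on $Y$ and using that fibers of $Y\to X$ and $Y\to X'$ over points outside the relevant loci are themselves rationally chain connected.

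For part (2), when $-(K_X+\Delta)$ is merely pseudoeffective and $\Delta\neq 0$, the output $-(K_{X'}+\Delta')$ of the MMP is nef but possibly not big, so we are in the "Calabi--Yau type" boundary case. The dichotomy comes from analyzing the Iitaka/numerical dimension of $-(K_{X'}+\Delta')$: if it is big we are back in case (1) and get rational connectedness modulo $\pnklt(X,\Delta)$; if not, I would argue that the MMP must contract some component of $\Delta$, or else use a direct argument on $X'$ with $-(K_{X'}+\Delta')$ nef and $\Delta'\neq 0$, invoking the nef version of the Hacon--McKernan/Zhang theorem to conclude rational connectedness modulo $\nklt(X',\Delta')\cup(\text{components of }\Delta')$ and then pushing back to $X$; the components of $\Delta'$ that survive pull back to components of $\Delta$, giving the stated alternative. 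The main obstacle I anticipate is precisely this last step — cleanly handling the non-big case and showing that the "bad" locus on $X'$ pulls back to $\pnklt(X,\Delta)$ together with (at worst) components of $\Delta$ rather than some larger, uncontrolled set; this requires care with how the divisorial Zariski decomposition of $-f^*(K_X+\Delta)$ interacts with the steps of the MMP and with the support of the strict transform of $\Delta$.
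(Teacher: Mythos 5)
Your approach founders at its first step: you cannot, in general, run a $-(K_X+\Delta)$-minimal model program. Existence and termination of an MMP for the \emph{anti}-log-canonical divisor is not supplied by \cite{BCHM} (which makes $K_X+\Delta$ nef under suitable positivity hypotheses, not $-(K_X+\Delta)$); it is available essentially only when $X$ is a Mori dream space. Example \ref{nonft} of the paper exhibits a smooth rational $X$ with $-K_X$ big which is \emph{not} a Mori dream space, so no $-K_X$-minimal model is known to exist there. Circumventing exactly this obstruction is the point of the divisorial Zariski decomposition and of $\pnklt$: the paper never constructs a minimal model. Instead, for (1) it uses Corollary \ref{cor-r to q} and Proposition \ref{prop-pnkltQ} to produce an effective $D\sim_{\R}-(K_X+\Delta)$ with $\nklt(X,\Delta+D)=\pnklt(X,\Delta)$, turning the problem into one about a log Calabi--Yau--type pair on $X$ itself, and then shows via \cite[Theorem 3.5]{BP} and the non-uniruledness of the MRC quotient (\cite{GHS}) that this non-klt locus dominates the MRC quotient; Lemma \ref{dominate-rcc} concludes. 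Even granting your minimal model $X'$, a second gap appears: $(X',\Delta')$ is precisely \emph{not} klt along the image of $\pnklt(X,\Delta)$, so nef and big does not upgrade to semiample there, and \cite[Theorem 1.2]{HM2} --- which requires $-(K+\Delta)$ big and \emph{semiample} --- does not apply in the way you invoke it.

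For (2) your proposal essentially restates the desired conclusion. The content of Theorem \ref{thrm-RC pseff} in the case where the non-klt locus fails to dominate is that \emph{every} irreducible component of $\Delta$ dominates the MRC quotient $Z$ (and $\kappa(Z)=0$); this is proved by semistable reduction in codimension two, Campana's weak positivity (Lemma \ref{cam-weakpos}) applied to $K_{\widetilde Y/\widetilde Z}+\widetilde\Gamma_i'+V$, and the \cite{BDPP} uniruledness criterion to rule out $\widetilde\pi$-vertical components coming from $\Delta$. There is no ``nef version of Hacon--McKernan/Zhang'' giving rational connectedness modulo $\nklt\cup\Supp\Delta$ that you can quote; that statement is what must be proven. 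Your alternative mechanism (``the MMP must contract some component of $\Delta$'') is neither argued nor correct: in Example \ref{secondcaseoccurs} nothing is contracted and $\pnklt(X,\Delta)=\emptyset$, yet the conclusion holds only because every component of $\Delta$ dominates the abelian MRC quotient.
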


We prove that $\pnklt(X, \Delta)$ or every irreducible component of $\Delta$ in the case (2) dominates $Z$ via the maximally rationally connected fibration $\pi : X \dashrightarrow Z$ as in \cite{Z} and \cite{BP}.
Note that every ruled surface $S$ with the canonical ruling $\pi : S \to C$ has a section. Then $S$ is rationally connected modulo this section. Even though we do not know the existence of such section for the maximally rationally connected fibration, we can still pick a subvariety dominating the maximally rationally connected quotient.
We need the assumption $\Delta \neq 0$ in Theorem \ref{main thrm2} (2) for $X$ to be uniruled.
Furthermore, there exists a variety $X$ which is rationally connected modulo every irreducible component $\Delta$
but not modulo $\pnklt(X, \Delta)$ (Example \ref{secondcaseoccurs}).

It was first shown in \cite{KMM} and \cite{C} that smooth Fano varieties are rationally connected, and this result was generalized to Fano type varieties in \cite{Z} and \cite{HM2}.
The paper \cite{BP} further extends these results by showing that $X$ is rationally connected modulo $\nklt(X,\Delta)\cup\nnef(-(K_X+\Delta))$ when $-(K_X+\Delta)$ is big.
Theorem \ref{main thrm2} improves these results since by Lemma \ref{lem-nklt+nnef=pnklt}, we have
$$
\pnklt(X,\Delta) \subseteq \nklt(X,\Delta)\cup\nnef(-(K_X+\Delta))
$$
and the inclusion is strict in general (Example \ref{ex-pnklt,nklt}).
Furthermore, our result is strict in the sense that
there actually exists a variety $X$ which is only rationally connected modulo $\nklt(X,\Delta)\cup\nnef(-(K_X+\Delta))$
but not modulo any component outside $\pnklt(X,\Delta)$ (see Example \ref{improvebp}).

The main ingredient in the proofs of our main theorems is the following fundamental property of $\pnklt(X,\Delta)$.

\begin{proposition}[=Propositions \ref{prop-pnkltR} and \ref{pnklt-locus-pseff}]\label{main ingredient}
Let $X$ be a normal projective uniruled variety such that $-(K_X+\Delta)$ is $\R$-Cartier for some effective $\R$-divisor $\Delta$.
\begin{enumerate}
 \item[$(1)$] If $-(K_X+\Delta)$ is big, then there is an effective $\R$-Cartier divisor $D$ such that $D \sim_{\R} -(K_X + \Delta)$ and $\nklt(X, \Delta+D)=\pnklt(X, \Delta)$.
 \item[$(2)$] If $-(K_X+\Delta)$ is pseudoeffective, then there is an effective $\Q$-Cartier divisor $D$ such that $D \sim_{\Q} -(K_X + \Delta) + A$ for some ample $\R$-divisor $A$ and $\nklt(X, \Delta+D)\subseteq\pnklt(X, \Delta)$.
\end{enumerate}
\end{proposition}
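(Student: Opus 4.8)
The plan is to realize $D$ as a sufficiently general effective divisor with prescribed asymptotic singularities and then compare $\nklt(X,\Delta+D)$ with $\pnklt(X,\Delta)$ by means of the following valuative description. If $E$ is a prime divisor over $X$, realized on a log resolution $f\colon Y\to X$ of $(X,\Delta)$, then its coefficient in the negative part $N$ of the divisorial Zariski decomposition $-f^*(K_X+\Delta)=P+N$ equals the asymptotic order $\sigma_E\bigl(-(K_X+\Delta)\bigr)$, so that
$$
\oa(E;X,\Delta)=a(E;X,\Delta)-\sigma_E\bigl(-(K_X+\Delta)\bigr);
$$
consequently $\pnklt(X,\Delta)$ is (the limit, built into its definition, of) the union of the centers $\Cent_X E$ over the prime divisors $E$ over $X$ with $a(E;X,\Delta)-\sigma_E(-(K_X+\Delta))\le-1$. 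I will also use the monotonicity $\sigma_E(L+A)\le\sigma_E(L)$ for an ample $\R$-divisor $A$, which follows by adding a general member of a very ample multiple of $A$.

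For part $(1)$, since $-(K_X+\Delta)$ is big I would take $D:=\tfrac1p D_0$ for $p$ sufficiently divisible and $D_0\in|{-}p(K_X+\Delta)|$ a general member, dealing with $\R$-coefficients by first writing $-(K_X+\Delta)=\sum r_i L_i$ with $L_i$ Cartier and approximating. Then $D\sim_\R-(K_X+\Delta)$ is effective, $K_X+\Delta+D\sim_\R0$ is $\R$-Cartier, and by the standard theory of asymptotic multiplier ideals one has, for $p$ sufficiently divisible and $D_0$ general, $\mathcal J(X,\Delta+D)=\mathcal J\bigl(X,\Delta+\|{-}(K_X+\Delta)\|\bigr)$; hence
$$
\nklt(X,\Delta+D)=\operatorname{Cosupp}\mathcal J(X,\Delta+D)=\operatorname{Cosupp}\mathcal J\bigl(X,\Delta+\|{-}(K_X+\Delta)\|\bigr).
$$
The cosupport on the right is the union of the $\Cent_X E$ over all prime divisors $E$ over $X$ with $a(E;X,\Delta)-\sigma_E(-(K_X+\Delta))\le-1$, i.e. with $\oa(E;X,\Delta)\le-1$; this is contained in $\pnklt(X,\Delta)$ formally, and for the reverse inclusion I would invoke the structural results on $\pnklt$ from Section~\ref{sec-pnklr locus}: the family $\espnklt(X,\Delta)$ stabilizes as $\epsilon\to0^+$, and every irreducible component of $\pnklt(X,\Delta)$ is the center of some $E$ with $\oa(E;X,\Delta)\le-1$ (seen on a $-(K_X+\Delta)$-minimal model, where the potential discrepancy becomes the ordinary one).

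For part $(2)$, where only an inclusion is required, I would choose an ample $\R$-divisor $A$ with $L:=-(K_X+\Delta)+A$ a $\Q$-Cartier divisor — for instance add a very ample Cartier divisor and subtract the fractional part of $-(K_X+\Delta)$ — so that $L$ is big, being the sum of a pseudoeffective and an ample divisor. Taking $D:=\tfrac1pD_0$ with $p$ sufficiently divisible and $D_0\in|pL|$ general yields an effective $\Q$-Cartier divisor $D\sim_\Q L=-(K_X+\Delta)+A$ with $K_X+\Delta+D\sim_\Q A$ ample, and exactly as in $(1)$ one gets $\nklt(X,\Delta+D)=\operatorname{Cosupp}\mathcal J(X,\Delta+\|L\|)=\bigcup_E\{\Cent_X E:a(E;X,\Delta)-\sigma_E(L)\le-1\}$. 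For any such $E$ the monotonicity of $\sigma_E$ gives $\oa(E;X,\Delta)=a(E;X,\Delta)-\sigma_E(-(K_X+\Delta))\le a(E;X,\Delta)-\sigma_E(L)\le-1$, hence $\Cent_X E\subseteq\pnklt(X,\Delta)$, and therefore $\nklt(X,\Delta+D)\subseteq\pnklt(X,\Delta)$.

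The step I expect to be the main obstacle is the reverse inclusion in $(1)$ — that the cosupport of the asymptotic multiplier ideal already exhausts $\pnklt(X,\Delta)$ rather than only the locus cut out by valuations with $\oa\le-1+\epsilon$; this is exactly where the earlier structural results on $\pnklt$ are needed, and it is the reason $p$ must be taken large rather than merely positive. Concretely, on a fixed high log resolution $f\colon Y\to X$ that also resolves $|{-}p(K_X+\Delta)|$ one has $f^*D=N+M$ with $M$ effective, $M\sim_\R P$, and for $p$ large $M$ splits as a general smooth divisor transverse to $\Gamma+N$ (where $K_Y+\Gamma=f^*(K_X+\Delta)$) plus an effective divisor $R$ whose coefficients tend to $0$ as $p\to\infty$; once $p$ is large enough that $\mult_E R<1+\oa(E;X,\Delta)$ for the finitely many prime divisors $E$ on $Y$ with $\oa(E;X,\Delta)>-1$, reading off $\lfloor\Gamma+f^*D\rfloor$ on $Y$ produces precisely $\pnklt(X,\Delta)$, provided $Y$ is high enough for the stabilization above. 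The uniruledness hypothesis is not used anywhere in this argument and is retained only for compatibility with the surrounding statements.
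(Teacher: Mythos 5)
Your overall strategy --- general members of $|{-}p(K_X+\Delta)|$ for $p$ large and divisible, identification of $\nklt(X,\Delta+D)$ with the cosupport of an asymptotic multiplier ideal, and the monotonicity $\sigma_E(L+A)\le\sigma_E(L)$ for part (2) --- is the same as the paper's, but the two steps you yourself flag as delicate are exactly where the argument does not close, and the fixes you propose are not available. First, the identity $\operatorname{Cosupp}\mathcal J\bigl((X,\Delta);\|{-}(K_X+\Delta)\|\bigr)=\bigcup\{\Cent_XE:\oa(E;X,\Delta)\le-1\}$ is not a standard fact. What is standard is that the cosupport is computed on one log resolution of $\mathfrak b(|pL|)$ for one large $p$ and equals $\bigcup\{\Cent_XE:a(E;X,\Delta)-\tfrac1p\mult_EE_p\le-1\}$ where $E_p$ is the fixed part; since $\tfrac1p\mult_EE_p\ge\sigma_E(L)$, this only yields centers with $\oa(E)\le-1+\delta_p(E)$, $\delta_p(E)=\tfrac1p\mult_EE_p-\sigma_E(L)$. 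Making $\delta_p(E)$ small \emph{uniformly over all prime divisors $E$ on the ($p$-dependent) resolution} is the entire technical content of the paper's claim $(\#)$; it is extracted from \cite[Theorem 11.2.21]{posII}, i.e.\ $\mc J(\|mL\|)\otimes\mathcal O(-G')\subseteq\frak b(|mL|)$ with $G'$ a \emph{fixed} divisor, which gives $\tfrac1mE_m-N\le\tfrac1mf^*G$ everywhere. Your final paragraph asserts that the residual divisor $R$ has coefficients tending to $0$ and then chooses $p$ large relative to ``the finitely many prime divisors on $Y$'' --- but $Y$ must resolve $|pL|$ and hence depends on $p$, so without the uniform bound this is circular.

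Second, even granting that identity, it only identifies $\nklt(X,\Delta+D)$ with $\spnklt(X,\Delta)$, and your bridge from $\spnklt$ to $\pnklt$ ``seen on a $-(K_X+\Delta)$-minimal model'' is not legitimate: such models are not known to exist in general (their failure is the raison d'\^etre of the definition), and the paper explicitly leaves open whether $\spnklt(X,\Delta)=\pnklt(X,\Delta)$ (Remark \ref{rmk-subtle1}), proving it only when some pullback admits a divisorial Zariski decomposition with nef positive part (Proposition \ref{cor-frakpklt=pklt}). The paper instead produces, for each $\epsilon$, a divisor $D_\epsilon$ with $\epsilon'\text{-}\spnklt(X,\Delta)\subseteq\nklt(X,\Delta+D_\epsilon)\subseteq\espnklt(X,\Delta)$ and applies the Noetherian property to the nested \emph{closed} sets $\nklt(X,\Delta+D_i)$; the stabilization of $\espnklt(X,\Delta)$ as $\epsilon\to0^+$ that you invoke is a consequence of that sandwich, not an available input. (Your reduction from $\R$- to $\Q$-coefficients by ``approximating'' is also glossed over; the paper needs Proposition \ref{prop-intersection pnklt} and Corollary \ref{cor-r to q} for it. You are right, however, that uniruledness is nowhere used.)
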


In particular, in the case (1), $\pnklt(X, \Delta)$ is a connected Zariski closed subset of $X$
(Corollary \ref{cor-pnklt big closed} and Proposition \ref{connedted-pnklt}).

This paper is organized as follows.
In Section \ref{sec-prelim}, we recall the basic notions such as singularities of pairs,
Zariski decomposition, and rational connectedness.
We define potentially klt, lc pairs and prove some of their basic properties in Section \ref{sec-pklt}.
Section \ref{sec-pnklr locus} is devoted to show the fundamental properties of potentially non-klt locus including Proposition \ref{main ingredient}.
Theorems \ref{main thrm1} and \ref{main thrm2} are proved in Sections \ref{sec-characteirze FT} and \ref{sec-RCC of pklt}, respectively.

\section{Preliminaries}\label{sec-prelim}
In this section, we collect basic notions and facts that will be used throughout the paper.
We follow the standard definitions from \cite{KM}, \cite{Na}, and \cite{K1}.

\subsection{Pairs}
A \emph{pair} $(X, \Delta)$ consists of a normal projective variety $X$ and an effective $\R$-divisor $\Delta$ on $X$ such that $K_X + \Delta$ is $\R$-Cartier.
For a log resolution $f : Y \rightarrow X$ of $(X, \Delta)$, we have
$$
K_Y = f^{*}(K_X + \Delta) + \sum a_i E_i
$$
where each $E_i$ is a prime divisor on $Y$ and
$a_i = a(E_i; X, \Delta)$ is the \emph{discrepancy} of $(X,\Delta)$ at $E_i$.
A pair $(X, \Delta)$ is called \emph{Kawamata log terminal} (\emph{klt} for short) (resp. \emph{log canonical} (\emph{lc} for short)) if every $a_i > -1$ (resp. $a_i \geq -1$).

A pair $(X, \Delta)$ is called a \emph{Fano pair} (resp. \emph{weak Fano pair}) if $-(K_X + \Delta)$ is ample (resp. nef and big).
A normal projective variety $X$ is said to be \emph{of Fano type} if there exists an effective divisor $\Delta$ on $X$
such that $(X, \Delta)$ is a klt Fano pair.
A pair $(X, \Delta)$ with an effective $\Q$-divisor $\Delta$ is called a \emph{Calabi-Yau pair} if $-(K_X + \Delta) \sim_{\Q} 0$.
A normal projective variety $X$ is said to be \emph{of Calabi-Yau type} (resp. \emph{of klt Calabi-Yau type})
if there is an effective $\Q$-divisor $\Delta$ on $X$ such that $(X, \Delta)$ is an lc Calabi-Yau pair (resp. klt Calabi-Yau pair).
It is easy to see that $X$ is a variety of Fano type if and only if $X$ is a variety of klt Calabi-Yau type and $-K_X$ is big.

\subsection{Non-nef locus}
Let $D$ be a pseudoeffective $\R$-divisor on a normal projective variety $X$ and $\sigma$ a divisorial valuation of $X$.
If $D$ is big, then we define \emph{the asymptotic valuation} of $\sigma$ at $D$ as
$$
\sigmanum(D):=\inf\{\sigma(D')\mid D\equiv D'\geq 0\}.
$$
If $D$ is only pseudoeffective, then we define $\sigmanum(D):=\lim_{\epsilon\to 0+}\sigmanum(D+\epsilon A)$
for some ample divisor $A$. (This definition is independent of the choice of $A$.)
The number $\sigmanum(D)$ depends only on the numerical class $[D]\in\text{N}^1(X)_\R$.
It is a birational invariant: for a birational morphism $f:Y\to X$, we have $\sigmanum(D)=\sigmanum(f^*(D))$ (see \cite{BBP}). We define the \emph{non-nef locus} of $D$ as
$$
\nnef(D):=\bigcup\Cent_X\sigma
$$
where the union is taken over all divisorial valuations $\sigma$ such that $\sigmanum(D)>0$.
We define $\nnef(D)=X$ is $D$ is not pseudoeffective.

The stable base locus of a $\Q$-divisor $D$ on a normal projective variety $X$ is denoted by $\B(D)$.
It is known that $\B(D)$ coincides with the base locus $b(|mD|)$ of the linear system $|mD|$ for
all sufficiently large and divisible integer $m>0$. Note that
$\nnef(D) \subseteq \B(D)$.

\subsection{Zariski decomposition}
Let $X$ be a $\Q$-factorial  normal projective variety and $D$ a pseudoeffective $\R$-divisor on $X$.
We define the \emph{negative part} $N=N(D)$ of $D$ as
$$
N(D):=\sum_{\sigma} \sigmanum (D)E_\sigma
$$
where the summation runs over all divisorial valuations $\sigma$ of $X$ such that
the centers $\Cent_X\sigma=E_\sigma$ are prime divisors on $X$.
It is known that there are only finitely many divisorial valuations $\sigma$
with divisorial centers $E_\sigma$ such that $\sigmanum(D)>0$.
The following decomposition is called the \emph{divisorial Zariski decomposition} of $D$:
$$
D= P(D)+N(D)
$$
where $P=P(D):=D-N$ is called the \emph{positive part} of $D$.

\begin{lemma}[{\cite[Chapter III]{Na}}]\label{lem-divzd}
Let $X$ be a $\Q$-factorial normal projective variety and $D$ a pseudoeffective $\R$-divisor with the divisorial Zariski decomposition $D=P+N$.
\begin{enumerate}
\item[$(1)$] $P$ is movable, i.e., $\nnef(P)$ has no divisorial components.
\item[$(2)$] $P$ is maximal in the sense that if $L$ is movable and $L \leq D$, then $L \leq P$.
\item[$(3)$] Let $E_1, \ldots, E_k$ be mutually distinct prime divisors. If $D$ is big, then for any $\epsilon >0$, there is an effective $\R$-divisor $P_0 \sim_{\R} P$ such that $\mult_{E_i} P_0 < \epsilon$ for any $i$.
\end{enumerate}
\end{lemma}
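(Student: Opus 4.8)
The plan is to treat the three assertions separately, using the defining formula $N(D)=\sum_\sigma \sigmanum(D)E_\sigma$ and the basic properties of the asymptotic valuation $\sigmanum$. For $(1)$, I would argue by contradiction: if $\nnef(P)$ had a divisorial component $E$, then $\sigmanum(P)>0$ where $\sigma$ is the valuation with $\Cent_X\sigma=E$. Since $P=D-N$ and $N$ is supported on the prime divisors $E_\tau$ with positive $\sigmanum(D)$, I would compute $\sigmanum(D)$ in terms of $\sigmanum(P)$ and $\mult_E N$. The key point is the formula $\sigmanum(D)=\sigmanum(P)+\mult_E N$ for divisorial $\sigma$ with divisorial center $E$: one inequality ($\leq$) follows because adding effective divisors can only increase the valuation, while the other ($\geq$) follows because $P=D-N$ with $N\geq 0$ fixed along $E$ by exactly $\mult_E N=\sigmanum(D)$ (using that $\sigmanum$ is subtracted off maximally in the definition of $N$). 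This forces $\sigmanum(P)=0$, a contradiction. In the pseudoeffective (non-big) case one passes to the limit $D+\epsilon A$ and uses that all the relevant quantities are continuous/limits.

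For $(2)$, suppose $L$ is movable with $L\leq D$. Write $D=L+(D-L)$ with $D-L\geq 0$. For each prime divisor $E_\sigma$ appearing in $N(D)$, movability of $L$ gives $\sigmanum(L)=0$ (no divisorial components in $\nnef(L)$), hence by monotonicity and the additivity along a fixed divisor, $\sigmanum(D)=\sigmanum((D-L)+L)\leq \mult_{E_\sigma}(D-L)+\sigmanum(L)=\mult_{E_\sigma}(D-L)$. Thus $\mult_{E_\sigma}(D-L)\geq \sigmanum(D)=\mult_{E_\sigma}N$, which says $D-L\geq N$ along every component of $N$; since $D-L\geq 0$ everywhere, we conclude $D-L\geq N$, i.e. $L\leq D-N=P$.

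For $(3)$, fix prime divisors $E_1,\dots,E_k$ and $\epsilon>0$. Since $D$ is big and $P=D-N$ is movable, $P$ is big as well. By definition of $\sigmanum$ for a big divisor, for the valuation $\sigma_i$ associated to $E_i$ we have $\sigma_{i,\mathrm{num}}(P)=0$ (by part $(1)$), so there exists an effective $\R$-divisor $P_i\equiv P$ with $\mult_{E_i}P_i<\epsilon$. To handle all $i$ simultaneously I would take a general convex combination, or rather apply the standard perturbation argument: choose $P_0\equiv P$ effective with $\mult_{E_i}P_0<\epsilon$ for all $i$ at once by first writing $P\equiv \frac1N\sum_{j} P^{(j)}$ cleverly, but more cleanly one uses that the set of effective divisors numerically equivalent to $P$ with $\mult_{E_i}<\epsilon$ for a single $i$ is nonempty and, intersecting finitely many such conditions, one can diagonalize since each condition only constrains one divisor. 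Concretely, for each $i$ pick $P_i\equiv P$ effective with $\mult_{E_i}P_i<\epsilon$; then $P_0:=\frac1k\sum_{i=1}^k P_i\equiv P$ is effective with $\mult_{E_i}P_0=\frac1k\sum_j\mult_{E_i}P_j$, which is not obviously $<\epsilon$ — so instead I would argue that since each $\sigma_{i,\mathrm{num}}(P)=0$, the infimum defining it is approached, and by taking $P_i$ with $\mult_{E_i}P_i$ small enough (say $<\epsilon/k$ times a bound) the average works; I expect the cleanest route is to cite the bigness of $P$ together with $\sigma_{i,\mathrm{num}}(P)=0$ and perturb by a small ample to absorb the finitely many constraints.

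The main obstacle I anticipate is establishing the additivity formula $\sigmanum(D)=\sigmanum(P)+\mult_E N$ (equivalently, that $N$ strips off exactly the divisorial negative part so that $P$ has vanishing $\sigmanum$ along every prime divisor), since this requires care in the merely-pseudoeffective case where $\sigmanum$ is defined as a limit $\sigmanum(D+\epsilon A)$ and one must commute this limit with the decomposition; for the big case it is reasonably direct from the definition as an infimum over effective representatives. Since this lemma is attributed to \cite[Chapter III]{Na}, I would in the write-up primarily reduce each statement to the cited results there, presenting the short arguments above only where a self-contained check is illuminating.
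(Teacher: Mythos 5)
The paper offers no proof of this lemma at all: it is quoted directly from Nakayama \cite[Chapter III]{Na} (the relevant statements there are III.1.8--III.1.12 and V.1.3), so your closing plan of ``primarily reducing each statement to the cited results'' is exactly what the paper does. To the extent that you sketch self-contained arguments: part (2) is correct as written (subadditivity of $\sigmanum$ together with $\sigmanum(G)\leq \mult_{E_\sigma}G$ for effective $G$ gives $\mult_{E_\sigma}(D-L)\geq \sigmanum(D)$ on each component of $N$, whence $D-L\geq N$), and part (1) does reduce to the identity $\sigmanum(P)=\sigmanum(D)-\mult_E N$; but be aware that this identity is the real content of Nakayama's construction (it is not formal, especially in the merely pseudoeffective case where $\sigmanum$ is defined as a limit), so it should be cited, not asserted.

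The one genuine gap is in part (3), and it is exactly where you hesitate. The convex combination $P_0=\frac{1}{k}\sum_i P_i$ does not work, and the difficulty is not cosmetic: each $f_i(P')=\mult_{E_i}P'$ is an affine functional on the convex set $\{P'\geq 0 : P'\sim_\R P\}$ with infimum $0$, but finitely many such functionals need not be simultaneously small at any single point (on $\{x+y=1,\ x,y\geq 0\}$ the functionals $f_1=x$, $f_2=y$ each have infimum $0$ yet $f_1+f_2\equiv 1$); ``perturbing by a small ample'' does not by itself resolve this. The mechanism that does work --- the one in \cite[III.1.8]{Na}, and the same one this paper uses to prove the analogous Lemma \ref{prop-sd} (3) --- is to realize the infimum through linear systems: after approximating $P$ by a big $\Q$-divisor (using bigness to absorb the error into an ample class and to pass from $\equiv$ to $\sim_{\R}$), a \emph{general} member of $|mP'|$ has multiplicity along $E_i$ equal to that of the fixed part of $|mP'|$ for all $i=1,\dots,k$ at once, because avoiding excess multiplicity along each $E_i$ is a nonempty Zariski-open condition on the projective space $|mP'|$ and finitely many nonempty open conditions have nonempty intersection. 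Dividing by $m$ and letting $m\to\infty$ produces the required $P_0$. Without this (or an equivalent simultaneity statement from \cite{Na}), your proof of (3) is incomplete.
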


We now briefly recall the $s$-decomposition.
For more details, we refer to \cite{S} and \cite{P}.
Let $X$ be a $\Q$-factorial normal projective variety and $D$ be an effective $\Z$-divisor on $X$. By \cite[Lemma 1.10]{P}, there is a unique effective $\Z$-divisor $M$ such that (1) $M \leq D$, (2) $H^0(\mathcal{O}_X(M))=H^0(\mathcal{O}_X(D))$, and (3) if for a divisor $L \leq D$ with $H^0(\mathcal{O}_X(L))=H^0(\mathcal{O}_X(D))$, then $L \geq M$. We denote by $M_{\Z}(D)$ for such a divisor.

Now consider an effective $\Q$-divisor $D$ on $X$. We define $M(D):=M_{\Z}(\lfloor D \rfloor)$. If we put
$$
P_s=P_s(D):= \limsup_{n \to \infty} \frac{M(nD)}{n},
$$
then $nD \geq nP_s(D) \geq M(nD)$ for any integer $n>0$. The following decomposition is called the \emph{$s$-decomposition} of $D$:
$$
D=P_s(D)+N_s(D)
$$
where $N_s=N_s(D):=D-P_s(D)$. We can alternatively define as
$$
N_s=N_s(D):=\inf\{L \mid L \sim_{\Q} D, L \geq 0\}.
$$

We define a section ring for an effective $\R$-divisor $D$ on a $\Q$-factorial normal projective variety $X$ as $R(D):=\bigoplus_{m \geq 0} H^0(\mathcal{O}_X(\lfloor mD \rfloor))$.
For effective $\R$-divisors $D$ and $E$ on $X$, we write $R(D) \simeq R(E)$ if $H^0(\mathcal{O}_X(\lfloor mD \rfloor)) \simeq H^0(\mathcal{O}_X(\lfloor mE \rfloor))$ for infinitely many integers $m>0$.

\begin{lemma}\label{prop-sd}
Let $X$ be a $\Q$-factorial normal projective variety and $D$ an effective $\Q$-divisor with the $s$-decomposition $D=P_s+N_s$.
\begin{enumerate}
\item[$(1)$] $R(D) \simeq R(P_s)$.
\item[$(2)$] $P_s$ is minimal in the sense that if $L$ is an effective divisor with $R(L) \simeq R(D)$, then $P_s \leq L$.
\item[$(3)$]  Let $E_1, \ldots, E_k$ be mutually distinct prime divisors. Then for any $\epsilon >0$, there is an effective $\R$-divisor $P_0 \sim_{\R} P_s$ such that $\mult_{E_i} P_0 < \epsilon$ for any $i$.
\end{enumerate}
\end{lemma}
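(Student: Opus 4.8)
The plan is to establish the three parts by transcribing the proof of the corresponding properties of the divisorial Zariski decomposition (Lemma~\ref{lem-divzd}) into the language of linear systems, the bridge being the elementary identity, valid for every integer $m>0$,
$M(mD)=M_\Z(\lfloor mD\rfloor)=\lfloor mD\rfloor-\mathrm{Fix}\,|\lfloor mD\rfloor|$,
i.e.\ $M(mD)$ is precisely the movable part of the complete linear system $|\lfloor mD\rfloor|$. This is immediate from the defining property of $M_\Z$: a section $s$ of $\mathcal{O}_X(\lfloor mD\rfloor)$ belongs to $H^0(\mathcal{O}_X(M))$ for an effective $M\le\lfloor mD\rfloor$ exactly when $\mathrm{div}(s)+M\ge0$, so the minimal such $M$ is $\lfloor mD\rfloor-\inf_{D'\in|\lfloor mD\rfloor|}D'$. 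I will use freely the two facts recorded right after the definition of the $s$-decomposition: $0\le M(mD)\le mP_s\le mD$ for every $m$, and $\tfrac1mM(mD)\to P_s$ coefficientwise.

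\textbf{Parts (1) and (2).}
For (1), since $M(mD)$ has integral coefficients and $M(mD)\le mP_s\le mD$, we obtain $M(mD)\le\lfloor mP_s\rfloor\le\lfloor mD\rfloor$, hence $H^0(\mathcal{O}_X(M(mD)))\subseteq H^0(\mathcal{O}_X(\lfloor mP_s\rfloor))\subseteq H^0(\mathcal{O}_X(\lfloor mD\rfloor))$; the two outer terms coincide by the defining property of $M_\Z$, so all three are equal for every $m$, which is more than enough for $R(P_s)\simeq R(D)$. For (2), let $L\ge0$ with $R(L)\simeq R(D)$ and fix $m$ realizing the isomorphism; after intersecting with $\lfloor mD\rfloor$ if necessary we may assume $\lfloor mL\rfloor\le\lfloor mD\rfloor$ with $H^0(\mathcal{O}_X(\lfloor mL\rfloor))=H^0(\mathcal{O}_X(\lfloor mD\rfloor))$, and then the minimality clause of $M_\Z$ gives $\lfloor mL\rfloor\ge M(mD)$, so $L\ge\tfrac1mM(mD)$. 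Letting $m\to\infty$ through such values and invoking $\tfrac1mM(mD)\to P_s$ gives $P_s\le L$.

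\textbf{Part (3).}
Since $\lfloor mD\rfloor-M(mD)=\mathrm{Fix}\,|\lfloor mD\rfloor|$ is the fixed part, the complete linear system $|M(mD)|$, which is nonempty as $M(mD)\ge0$, has no fixed component; hence a general member $F_m\in|M(mD)|$ is effective, satisfies $F_m\sim M(mD)$, and has $\mult_{E_i}F_m=0$ for $i=1,\dots,k$, as there are only finitely many prime divisors to avoid. Put $G_m:=P_s-\tfrac1mM(mD)\ge0$; then $P_0:=\tfrac1mF_m+G_m$ is an effective $\R$-divisor with $P_0\sim_\R\tfrac1mM(mD)+G_m=P_s$ and $\mult_{E_i}P_0=\mult_{E_i}G_m$ for all $i$. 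Since $G_m\to0$ coefficientwise along a sequence realizing $\tfrac1mM(mD)\to P_s$, taking $m$ in that sequence large enough makes $\mult_{E_i}P_0<\epsilon$ for every $i$. Note that, unlike in Lemma~\ref{lem-divzd}(3), no bigness assumption on $D$ is needed, because $|\lfloor mD\rfloor|$ always contains $\lfloor mD\rfloor$ and so is nonempty.

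\textbf{Main obstacle.}
The step I expect to be the real obstacle is securing the convergence $\tfrac1mM(mD)\to P_s$ used in (2) and (3): one must know this is a genuine limit and not merely a $\limsup$, which reduces to the superadditivity $M((m+n)D)\ge M(mD)+M(nD)$ of the movable parts together with a Fekete-type argument (or one may simply quote the corresponding statement in \cite{P}). A second, milder point of care, again in (2), is the precise content of the hypothesis $R(L)\simeq R(D)$: it is enough that the relevant graded pieces agree as subspaces of $k(X)$ for infinitely many $m$, which is what legitimizes the reduction to $\lfloor mL\rfloor\le\lfloor mD\rfloor$ with equal sections. Granting these, the rest is routine manipulation of linear systems paralleling the proof of Lemma~\ref{lem-divzd}.
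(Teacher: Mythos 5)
Your proof is correct, but it takes a genuinely different and more self-contained route than the paper. For (1) and (2) the paper simply cites \cite[Proposition 4.6]{P}, whereas you reprove them from the definition of $M_{\Z}$ via the identity $M(mD)=\lfloor mD\rfloor-\mathrm{Fix}\,|\lfloor mD\rfloor|$; your arguments go through, granted the two caveats you yourself flag, namely that $R(L)\simeq R(D)$ must be read as equality of the graded pieces as subspaces of $k(X)$ (under a purely abstract isomorphism part (2) would be false), and that $\frac{1}{m}M(mD)\to P_s$ is a genuine limit, which indeed follows from the superadditivity $M((m+n)D)\ge M(mD)+M(nD)$ plus Fekete, or can be quoted from \cite{P}. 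The divergence is most visible in (3): the paper argues by contradiction, noting that if every effective $P_0\sim_{\Q}P_s$ satisfied $\mult_E P_0\ge\epsilon$ then $P_s-\epsilon E\ge 0$ and $R(P_s-\epsilon E)\simeq R(P_s)$, violating the minimality in (2), and it handles several divisors simultaneously by a Zariski-openness remark; you instead build $P_0$ explicitly as $\frac{1}{m}F_m+\bigl(P_s-\frac{1}{m}M(mD)\bigr)$ with $F_m$ a general member of the fixed-component-free system $|M(mD)|$. The paper's version of (3) is shorter and uses (2) as a black box; yours is constructive and makes explicit where the divisor avoiding the $E_i$ comes from. Both approaches are valid.
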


\begin{proof}
For (1) and (2), see \cite[Proposition 4.6]{P}.
Let $E$ be a prime divisor on $X$ and fix $\epsilon>0$.
To prove (3), it suffices to find an effective divisor $P_0 \sim_{\Q} P_s$ such that $\mult_{E} P_0 < \epsilon$ since the condition $\mult_{E} \lfloor mP_0 \rfloor < m\epsilon$ is a Zariski open condition in the projective space $|\lfloor mP_0 \rfloor |$ for an integer $m>0$. Suppose that $\mult_{E} P_0 \geq \epsilon$ for every effective divisor $P_0 \sim_{\Q} P_s$. Then $P_s - \epsilon E \geq 0$ and $R(P_s - \epsilon E) \simeq R(P_s)$, which contradicts the minimal property of $P_s$.
\end{proof}

\begin{remark}
Let $D$ be an effective $\R$-divisor on a $\Q$-factorial normal projective variety and $D=P+N$ (resp. $D=P_s + N_s$) the divisorial Zariski decomposition
(resp. the $s$-decomposition).
\begin{enumerate}
\item By Lemmas \ref{lem-divzd} and \ref{prop-sd}, $P_s \leq P$ and $P_s \neq P$ in general. If $D$ is big, then $P_s = P$.
\item $P$ and $P_s$ may be $\R$-divisors even if $D$ is a $\Q$-divisor.
\end{enumerate}
\end{remark}

\subsection{Rational connectedness}

\begin{definition}
Let $X$ be a normal projective variety and $V$ a subset of $X$. We say that $X$ is \emph{rationally connected modulo} $V$ if either
\begin{enumerate}
\item $V \neq \emptyset$ and there is an irreducible component $C$ of $V$ such that for any general point $x\in X$, there exists a rational curve joining $x$ and $C$, or
\item $V = \emptyset$ and $X$ is rationally connected.
\end{enumerate}
If we replace `a rational curve' by `a chain of rational curves', then we say that $X$ is \emph{rationally chain connected modulo} $V$.
\end{definition}

By \cite[Corollary 1.5]{HM2}, if $(X, \Delta)$ is a klt pair,
then $X$ is rationally connected if and only if $X$ is rationally chain connected.
However, it is not true for lc pairs in general (e.g., non-rational lc del Pezzo surface).

It is well known that a Fano type variety is rationally connected (\cite{Z}, \cite{HM2}).
However, a rationally connected variety need not be of Fano type. 

Suppose that $X$ is a smooth projective variety. By \cite[Theorem IV.5.4]{K1}, there is the \emph{maximally rationally connected fibration} (\emph{MRC-fibration} for short) $\pi : X \dashrightarrow Z$. We call $Z$ the \emph{MRC-quotient}. It is easy to see that if $X$ is rationally connected if and only if $Z$ is a point, and $X$ is not uniruled if and only if $Z=X$. Note that by \cite[Corollary 1.4]{GHS}, the MRC-quotient is not uniruled.



\section{Potential pairs}\label{sec-pklt}
In this section, we introduce the notion of potentially klt, lc pairs for $(X,\Delta)$
such that $-(K_X+\Delta)$ are pseudoeffective.
We will see that these potentially klt, lc pairs satisfy many of the properties of the usual klt, lc pairs.

First of all, note that the discrepancy $a(E;X,\Delta)$ depends only on the divisorial valuation $\sigma$ of $X$ such that the center
$\Cent_Y\sigma=E$ is a prime divisor for some high model $Y\to X$.
Thus we will also often write $a(E;X,\Delta)=a(\sigma;X,\Delta)$.

\begin{definition}\label{def-Phi}
Let $(X,\Delta)$ be a pair such that $-(K_X+\Delta)$ is pseudoeffective.
\begin{enumerate}
\item The \emph{potential discrepancy} of the pair $(X,\Delta)$ at a divisorial valuation $\sigma$ of $X$ is defined as
$$
\oa(\sigma;X,\Delta):=a(\sigma;X,\Delta)-\sigmanum(-(K_X+\Delta)).
$$
\item The \emph{total potential discrepancy of the pair} $(X,\Delta)$ is defined as
$$
\mathfrak A(X,\Delta):=\inf_{\sigma}\{\oa(\sigma;X,\Delta)\}
$$
where the infimum is taken over all the divisorial valuations $\sigma$ of $X$.
\item The \emph{total potential discrepancy of the variety} $X$ is defined as
$$
\mathfrak A(X):=\sup_{\Delta}\mathfrak A (X,\Delta)
$$
where the supremum is taken over all effective $\R$-divisors $\Delta$ on $X$ such that
$-(K_X+\Delta)$ are pseudoeffective $\R$-Cartier divisors.
\end{enumerate}
\end{definition}

Potential discrepancies $\oa(\sigma;X,\Delta)$ in (1) can be equivalently defined with the divisorial Zariski decomposition as follows.
For a birational morphism $f:Y\to X$, suppose that $\Cent_X\sigma=E$ is a prime divisor and
let $f^*(-(K_X+\Delta))=P+N$ be the divisorial Zariski decomposition.
Since we have $\sigma_{num}(-(K_X+\Delta))=\sigma_{num}(-f^*(K_X+\Delta))=\mult_EN$,
as in Introduction we can also define
$$\oa(\sigma;X,\Delta):=a(\sigma;X,\Delta)-\mult_EN.$$

\begin{definition}\label{def-pnklt}

Let $(X,\Delta)$ be a pair such that $-(K_X+\Delta)$ is pseudoeffective. Fix $\epsilon \geq 0$.
If $\epsilon=0$, then we omit `$\epsilon$-' in the definitions below.
\begin{enumerate}
\item We say that $(X, \Delta)$ is \emph{$\epsilon$-strictly potentially klt} (resp. \emph{$\epsilon$-strictly potentially lc}) if $\oa(\sigma;X, \Delta) > -1 + \epsilon$ (resp. $\geq -1 + \epsilon$) holds for every divisorial valuation $\sigma$.

\item The \emph{$\epsilon$-strictly potentially non-klt locus} of $(X,\Delta)$ is defined as
$$
\espnklt(X,\Delta):=\bigcup_{\sigma}\Cent_X\sigma
$$
where the union is taken over all $\sigma$ such that $\oa(\sigma;X,\Delta)\leq-1+\epsilon.$

\item We say that $(X, \Delta)$ is \emph{$\epsilon$-potentially klt} (resp. \emph{$\epsilon$-potentially lc}) if $\mathfrak A(X, \Delta) > -1 + \epsilon$ (resp. $\geq -1 + \epsilon$) holds.
\footnote{The potentially klt pair was initially defined as the strictly potentially klt pair.}

\item The \emph{$\epsilon$-potentially non-klt locus} of $(X,\Delta)$ is defined as
$$
\epnklt(X,\Delta):=\bigcap_{\epsilon'>\epsilon}\epsilon'\text{-}\spnklt(X,\Delta).
$$
\end{enumerate}
\end{definition}

By definition, it is easy to see that $\espnklt(X,\Delta)\subseteq\epnklt(X,\Delta)$ for any $\epsilon\geq 0$.
However, it is not clear whether the equality holds in general.
Below, we will mainly consider the case where $\epsilon=0$.
As explained above, we can determine $\pnklt(X,\Delta)$ by considering the divisorial Zariski decompositions of the pull backs of $-(K_X+\Delta)$ on all higher smooth models of $X$.
Unlike the case with the usual klt, lc singularities, we need to consider all the prime divisors over $X$
(not only the prime divisors on a fixed log resolution)
to determine the potential klt, lc pairs.

\begin{lemma}\label{lem-P<1=klt}
Let $(X,\Delta)$ be a pair such that $-(K_X+\Delta)$ is pseudoeffective.
\begin{enumerate}
\item[$(1)$]
If $(X,\Delta)$ is potentially klt (resp. potentially lc), then $(X,\Delta)$ is klt (resp. lc).
\item[$(2)$] $(X,\Delta)$ is potentially klt if and only if $\pnklt(X,\Delta)=\emptyset$.
\end{enumerate}
\end{lemma}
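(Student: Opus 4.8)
The plan is to exploit the two elementary inequalities behind the definition of the potential discrepancy: $\oa(\sigma;X,\Delta)\le a(\sigma;X,\Delta)$ and $\sigmanum(-(K_X+\Delta))\ge 0$.

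\smallskip
\emph{Part (1).} Since $-(K_X+\Delta)$ is pseudoeffective, $\sigmanum(-(K_X+\Delta))\ge 0$ for every divisorial valuation $\sigma$, so $\oa(\sigma;X,\Delta)=a(\sigma;X,\Delta)-\sigmanum(-(K_X+\Delta))\le a(\sigma;X,\Delta)$. Hence $a(\sigma;X,\Delta)\ge\oa(\sigma;X,\Delta)\ge\mathfrak A(X,\Delta)$ for every $\sigma$. Thus if $(X,\Delta)$ is potentially klt (resp.\ potentially lc), i.e.\ $\mathfrak A(X,\Delta)>-1$ (resp.\ $\ge -1$), then $a(\sigma;X,\Delta)>-1$ (resp.\ $\ge -1$) for every $\sigma$, i.e.\ $(X,\Delta)$ is klt (resp.\ lc). I would simply record this.

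\smallskip
\emph{Part (2).} I would split the equivalence into two steps. \textbf{Step 1} ($(X,\Delta)$ potentially klt $\iff$ $\espnklt(X,\Delta)=\emptyset$ for some $\epsilon>0$): if $c:=\mathfrak A(X,\Delta)>-1$, put $\epsilon_0:=(c+1)/2>0$; then $\oa(\sigma;X,\Delta)\ge c>-1+\epsilon_0$ for every $\sigma$, so no $\sigma$ satisfies $\oa(\sigma;X,\Delta)\le -1+\epsilon_0$, i.e.\ $\espnklt(X,\Delta)=\emptyset$ for $\epsilon=\epsilon_0$; conversely, if $\espnklt(X,\Delta)=\emptyset$ for some $\epsilon=\epsilon_0>0$, then $\oa(\sigma;X,\Delta)>-1+\epsilon_0$ for every $\sigma$, so $\mathfrak A(X,\Delta)\ge -1+\epsilon_0>-1$. \textbf{Step 2} ($\espnklt(X,\Delta)=\emptyset$ for some $\epsilon>0$ $\iff$ $\pnklt(X,\Delta)=\emptyset$): the implication `$\Rightarrow$' is immediate from $\pnklt(X,\Delta)\subseteq\espnklt(X,\Delta)$; for `$\Leftarrow$', the sets $\espnklt(X,\Delta)$ are nondecreasing in $\epsilon$, so $\pnklt(X,\Delta)=\bigcap_{\epsilon>0}\espnklt(X,\Delta)$ equals the intersection of the nonincreasing chain obtained for $\epsilon=1,\tfrac12,\tfrac13,\dots$, and — assuming each $\espnklt(X,\Delta)$ with $\epsilon>0$ is Zariski closed — this is a nonincreasing chain of closed subsets of the Noetherian space $X$, which therefore stabilizes; as its intersection $\pnklt(X,\Delta)$ is empty, some term (with $\epsilon=1/n$) is already empty.

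\smallskip
\emph{Main obstacle.} Everything above is formal except the input used in Step 2: that $\espnklt(X,\Delta)$ is Zariski closed for each fixed $\epsilon>0$. This is where the proof has content, since unlike the ordinary non-klt locus the potential discrepancies depend on the divisorial Zariski decompositions of the pull-backs of $-(K_X+\Delta)$ on \emph{all} higher models. To prove it I would first note that we may assume $(X,\Delta)$ is lc: otherwise $\mathfrak A(X,\Delta)<-1$ by Part (1), so some $\sigma$ has $\oa(\sigma;X,\Delta)\le -1$, whence $\emptyset\neq\Cent_X\sigma\subseteq\espnklt(X,\Delta)$ for every $\epsilon>0$ and $\pnklt(X,\Delta)\neq\emptyset$ already. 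Assuming moreover $\pnklt(X,\Delta)=\emptyset$ one also has $\oa(\sigma;X,\Delta)>-1$ for all $\sigma$ (since $\espnklt[0](X,\Delta)\subseteq\pnklt(X,\Delta)$), so on a log resolution $f:Y\to X$ with $-f^{*}(K_X+\Delta)=P+N$ the divisorial Zariski decomposition and $K_Y+\Delta_Y=f^{*}(K_X+\Delta)$, the divisor $\Delta_Y+N$ is a sub-boundary whose $E_i$-coefficient equals $-\oa(E_i;X,\Delta)\le 1$. One then has $\oa(\sigma;X,\Delta)=a(\sigma;Y,\Delta_Y+N)$ up to the correction term $\sigmanum(P)$, which vanishes for valuations $\sigma$ whose center on $Y$ is not contained in $\nnef(P)$; for those, the standard weighted blow-up analysis on the log smooth pair $(Y,\Delta_Y+N)$ confines the centers contributing to $\espnklt(X,\Delta)$ to the finitely many strata of $\Supp(\Delta_Y+N)$ whose coefficients are $\ge 1-\epsilon$. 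The genuinely delicate point — which I expect to be the heart of the argument — is to show that the part of $\espnklt(X,\Delta)$ lying over $\nnef(P)$ is also swept out by finitely many subvarieties, so that $\espnklt(X,\Delta)$ is Zariski closed; this is exactly the place where the non-nef locus of the movable part $P$, which need not be closed on its own, must be shown to interact with the condition $\oa(\sigma;X,\Delta)\le -1+\epsilon$ only through finitely many centers.
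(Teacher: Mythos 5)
Part (1) and the implication ``potentially klt $\Rightarrow\pnklt(X,\Delta)=\emptyset$'' in Part (2) are correct and coincide with the paper's (essentially one-line) proof. The trouble is your converse in Part (2). The paper treats it as immediate from the definitions, and the only content that reading can carry is the following contrapositive: if $(X,\Delta)$ fails to be potentially klt because some single valuation $\sigma$ satisfies $\oa(\sigma;X,\Delta)\le -1$, then $\Cent_X\sigma$ lies in $\espnklt(X,\Delta)$ for every $\epsilon>0$, hence in $\pnklt(X,\Delta)$, which is therefore nonempty. You never record this elementary observation; instead you route the whole converse through the claim that $\espnklt(X,\Delta)$ is Zariski closed for each fixed $\epsilon>0$, so that the decreasing chain stabilizes by Noetherianity.

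That claim is the genuine gap. You do not prove it --- you explicitly defer ``the heart of the argument'' (the valuations centered over $\nnef(P)$) --- and it is almost certainly out of reach in the generality of the lemma, which assumes only that $-(K_X+\Delta)$ is pseudoeffective. The paper establishes closedness of $\pnklt(X,\Delta)$ only when $-(K_X+\Delta)$ is big (Corollary \ref{cor-pnklt big closed}, via the global construction of $D$ with $\nklt(X,\Delta+D)=\pnklt(X,\Delta)$ in Proposition \ref{prop-pnkltR}, using asymptotic multiplier ideals rather than a stratum-by-stratum analysis on a log resolution), and it states at the end of Section \ref{sec-pnklr locus} that closedness in the pseudoeffective case is unknown; closedness of every $\espnklt(X,\Delta)$ would be at least as strong. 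So your Step 2 ``$\Leftarrow$'' rests on an unproved, and per the paper open, hypothesis. Replace it by the contrapositive above to match the paper's argument, but note that this still leaves untouched the boundary case where $\mathfrak A(X,\Delta)=-1$ yet no single valuation attains $\oa(\sigma;X,\Delta)\le-1$; that residual subtlety is exactly the one the paper itself flags in Remark \ref{rmk-subtle1} and does not resolve in general.
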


\begin{proof}
(1) Note that $\sigmanum(D)\geq 0$ for any divisor $D$.
Therefore, the statement is obvious since $a(\sigma;X,\Delta)\geq a(\sigma;X,\Delta)-\sigmanum(-(K_X+\Delta))$.

(2) Immediate by definition.
\end{proof}

\begin{remark}\label{rmk-subtle1}
\begin{enumerate}
\item The converse of Lemma \ref{lem-P<1=klt} (1) is easily seen to be false. Any smooth projective variety $X$ is clearly klt, but if $-K_X$ is pseudoeffective and $\lfloor N\rfloor\neq 0$ where $-K_X=P+N$ is the divisorial Zariski decomposition,
then $X$ is not potentially klt. See also Example \ref{ex-pnklt,nklt}.
Note that by definition, if $\mathfrak A(X, \Delta) > -1$, then $(X, \Delta)$ is $\epsilon$-potentially klt for all sufficiently small $\epsilon > 0$.
It is unclear whether the converse with sufficiently small $\epsilon>0$ holds in general.
However, we can show that the converse holds when the pull back $-f^*(K_X+\Delta)$
admits the divisorial Zariski decomposition with nef positive part for some birational morphism $f:Y\to X$.
 See Proposition \ref{cor-frakpklt=pklt}.
\item Note that $\spnklt(X, \Delta)\subseteq \pnklt(X, \Delta)\subseteq\espnklt(X,\Delta)$ for any $\epsilon>0$.
As explained above, if $(X, \Delta)$ is a pair in dimension $2$, then $\spnklt(X, \Delta) = \pnklt(X, \Delta)$.
\end{enumerate}

\end{remark}

As we will see below, potentially klt, lc pairs behave similarly with the usual klt, lc pairs.

\begin{lemma}\label{lem-oa>oa}
Let $(X, \Delta)$ be a pair such that $-(K_X + \Delta)$ is pseudoeffective. If $\Delta'$ is an effective $\R$-Cartier divisor on $X$ such that $-(K_X + \Delta+\Delta')$ is pseudoeffective, then $\oa(\sigma; X, \Delta) \geq \oa(\sigma; X, \Delta+\Delta')$ holds for any divisorial valuation $\sigma$.
In particular, the following hold:
\begin{enumerate}
\item[$(1)$] $\mathfrak A(X,\Delta)\geq\mathfrak A(X,\Delta+\Delta')$.

\item[$(2)$] If $(X,\Delta+\Delta')$ is potentially klt, then so is $(X,\Delta)$.

\item[$(3)$] $\pnklt(X,\Delta)\subseteq\pnklt(X,\Delta+\Delta')$.
\end{enumerate}
\end{lemma}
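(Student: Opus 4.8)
The plan is to prove the single asserted inequality $\oa(\sigma; X, \Delta) \geq \oa(\sigma; X, \Delta+\Delta')$ for every divisorial valuation $\sigma$, and then deduce (1), (2), (3) as formal consequences. Writing out the definition, $\oa(\sigma;X,\Delta) = a(\sigma;X,\Delta) - \sigmanum(-(K_X+\Delta))$, so after cancelling it suffices to establish two things: first, that $a(\sigma;X,\Delta) - a(\sigma;X,\Delta+\Delta') = \mult_\sigma(\Delta')$ (where $\mult_\sigma$ denotes the coefficient of the $\sigma$-divisor in the pullback of $\Delta'$ to a high model realizing $\sigma$); and second, that $\sigmanum(-(K_X+\Delta)) \leq \sigmanum(-(K_X+\Delta+\Delta')) + \mult_\sigma(\Delta')$, equivalently $\sigmanum(-(K_X+\Delta+\Delta')) \geq \sigmanum(-(K_X+\Delta)) - \mult_\sigma(\Delta')$. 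Combining these two gives exactly $\oa(\sigma;X,\Delta)\geq\oa(\sigma;X,\Delta+\Delta')$.

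The first point is the standard additivity of discrepancies: on a common log resolution $f:Y\to X$ of $(X,\Delta+\Delta')$ one has $K_Y = f^*(K_X+\Delta+\Delta') + \sum a_i E_i = f^*(K_X+\Delta) - f^*\Delta' + \sum a_i E_i$, so $a(E_i;X,\Delta) = a(E_i;X,\Delta+\Delta') + \mult_{E_i} f^*\Delta'$, and since $\Delta'\geq 0$ the correction term $\mult_{E_i}f^*\Delta'$ is nonnegative. The second point is the behaviour of the asymptotic valuation $\sigmanum$ under adding an effective divisor: if $D' := -(K_X+\Delta)$ and $D := -(K_X+\Delta+\Delta') = D' - \Delta'$, I want $\sigmanum(D') \leq \sigmanum(D) + \sigma(\Delta')$, viewing $\sigma$ as a valuation. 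Pulling back to a model $Y$ where $\Cent_Y\sigma$ is a prime divisor $E_\sigma$, this reads $\sigmanum(f^*D') \leq \sigmanum(f^*D) + \mult_{E_\sigma}(f^*\Delta')$, using the birational invariance of $\sigmanum$ recalled in the Non-nef locus subsection. So everything reduces to the inequality $\sigmanum(D'+\Delta') \le \sigmanum(D') + \sigma(\Delta')$ for $\Delta'\geq 0$ (applied with $D'$ replaced by $f^*D$), i.e. superadditivity/subadditivity of $\sigmanum$ with respect to effective divisors. For $D'$ big this is immediate from the definition $\sigmanum(D')=\inf\{\sigma(D'')\mid D''\equiv D', D''\geq 0\}$: if $D''\equiv D'$, $D''\geq 0$ achieves the infimum up to $\varepsilon$, then $D''+\Delta' \equiv D'+\Delta'$ is effective and $\sigma(D''+\Delta') = \sigma(D'')+\sigma(\Delta')$, using $\sigma$-additivity of a fixed effective representative; actually the valuation of a sum of effective divisors equals the sum of valuations only when they share no common components, but what I really need is the easy direction $\sigma(D''+\Delta')\le\sigma(D'')+\sigma(\Delta')$... no — in fact equality $\sigma(D_1+D_2)=\sigma(D_1)+\sigma(D_2)$ does hold for the divisorial valuation $\sigma=\mult_{E_\sigma}$ applied to any two effective divisors, since $\mult_{E_\sigma}$ is additive on divisors. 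So $\sigmanum(D'+\Delta') \leq \sigma(D''+\Delta') = \sigma(D'')+\sigma(\Delta')$, and taking inf over $D''$ gives $\sigmanum(D'+\Delta')\leq\sigmanum(D')+\sigma(\Delta')$. For $D'$ merely pseudoeffective, pass to $D'+\epsilon A$ with $A$ ample, apply the big case, and let $\epsilon\to 0^+$; the value $\sigma(\Delta')$ is unaffected.

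Finally, (1) is immediate by taking $\inf_\sigma$ on both sides of the pointwise inequality; (2) follows since if $\mathfrak A(X,\Delta+\Delta')>-1$ then $\mathfrak A(X,\Delta)\geq\mathfrak A(X,\Delta+\Delta')>-1$; and (3) follows because $\oa(\sigma;X,\Delta)\le -1+\epsilon$ forces $\oa(\sigma;X,\Delta+\Delta')\le -1+\epsilon$, hence $\Cent_X\sigma$ lies in $\espnklt(X,\Delta+\Delta')$ for every $\epsilon'>\epsilon$... wait, the direction is: a center appearing in $\espnklt(X,\Delta)$ appears in $\espnklt(X,\Delta+\Delta')$, giving $\espnklt(X,\Delta)\subseteq\espnklt(X,\Delta+\Delta')$, and intersecting over $\epsilon>0$ yields $\pnklt(X,\Delta)\subseteq\pnklt(X,\Delta+\Delta')$. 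The only genuinely delicate step is the $\sigmanum$ inequality in the pseudoeffective (non-big) case, where one must be slightly careful that the limit defining $\sigmanum$ commutes with the fixed additive term $\sigma(\Delta')$; but since that term is a constant independent of $\epsilon$, the limit argument goes through without trouble. I expect no real obstacle — this lemma is the "monotonicity of potential discrepancies under enlarging the boundary," and it reduces cleanly to additivity of discrepancies plus the (well-known) monotonicity of $\sigmanum$ under adding effective divisors.
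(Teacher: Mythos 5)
Your proof is correct and follows essentially the same route as the paper: both split $\oa$ into the discrepancy part (handled by the additivity $a(E;X,\Delta)=a(E;X,\Delta+\Delta')+\mult_E f^*\Delta'$) and the asymptotic-valuation part (handled by the subadditivity $\sigmanum(-(K_X+\Delta))\leq\sigmanum(-(K_X+\Delta+\Delta'))+\sigma(\Delta')$), and then deduce (1)--(3) formally. The only difference is that the paper obtains the $\sigmanum$ inequality from the maximality of the positive part of the divisorial Zariski decomposition (Lemma \ref{lem-divzd}~(2), giving $N\leq N'+f^*\Delta'$), whereas you derive it directly from the infimum definition of $\sigmanum$ in the big case together with the limit defining it in the pseudoeffective case; both justifications are valid.
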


\begin{proof}
Take a log resolution $f : Y \to X$ of $(X, \Delta+\Delta')$ such that $\Cent_Y \sigma = E$ is a prime divisor. Let $f^*(-(K_X + \Delta))=P+N$ and $f^*(-(K_X+\Delta+\Delta'))=P'+N'$ be the divisorial Zariski decompositions.
Since $P' + (N'+f^*\Delta')=P+N$, it follows from Lemma \ref{lem-divzd} (2) that
$N'+f^*\Delta' \geq N$.
We may write
$$
K_Y = f^*(K_X + \Delta+\Delta') + F = f^*(K_X + \Delta) + (f^*\Delta'+F).
$$
We have $a(E; X, \Delta)=\mult_E (f^*\Delta'+F)$ and $a(E;X,\Delta+\Delta') = \mult_E F$.
Since $f^*\Delta'-N \geq -N'$, we get $f^*\Delta'+F-N \geq F-N'$.
This implies the required inequality
$$
\oa (E; X, \Delta)=\mult_E(f^*\Delta'+F-N)\geq \mult_E(F-N')=\oa(E;X,\Delta+\Delta').
$$
\end{proof}


\begin{lemma}\label{lem-prop of pklt}
Let $(X,\Delta)$ be a pair such that $-(K_X+\Delta)$ is big and $\Delta'$ an effective $\R$-Cartier divisor on $X$.
If $\oa(E;X,\Delta)>-1$ for a prime divisor $E$ over $X$,
then $\oa(E;X,\Delta+\epsilon\Delta')>-1$ for all sufficiently small $\epsilon>0$.
\end{lemma}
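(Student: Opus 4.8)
The strategy is to show that $\epsilon\mapsto\oa(E;X,\Delta+\epsilon\Delta')$ is continuous from the right at $\epsilon=0$, with limiting value $\oa(E;X,\Delta)>-1$; the conclusion is then immediate. Recall that $\oa(E;X,\Delta+\epsilon\Delta')=a(E;X,\Delta+\epsilon\Delta')-\sigmanum(-(K_X+\Delta+\epsilon\Delta'))$, so it suffices to treat the two terms separately.

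For the discrepancy term, fix a birational morphism $f:Y\to X$ from a smooth projective variety which is a log resolution of $(X,\Delta+\Delta')$ and on which the valuation $E$ has center a prime divisor (possible after further blow-ups). Writing $K_Y=f^*(K_X+\Delta)+\sum_i a_iE_i$ with $E=E_1$ and comparing with the analogous formula for $K_X+\Delta+\epsilon\Delta'$ gives
$$
a(E;X,\Delta+\epsilon\Delta')=a(E;X,\Delta)-\epsilon\,\mult_E f^*\Delta',
$$
which is affine in $\epsilon$ and tends to $a(E;X,\Delta)$ as $\epsilon\to0^+$. Moreover, since $-(K_X+\Delta)$ is big, it lies in the open big cone of $\Num^1(X)_\R$, so there is $\epsilon_0>0$ such that $-(K_X+\Delta+\epsilon\Delta')=-(K_X+\Delta)-\epsilon\Delta'$ is big for all $\epsilon\in[0,\epsilon_0]$; in particular $\oa(E;X,\Delta+\epsilon\Delta')$ is defined for such $\epsilon$.

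The term that requires care is $\sigmanum(-(K_X+\Delta+\epsilon\Delta'))$, the asymptotic order of the fixed divisorial valuation $E$ along the family $-(K_X+\Delta)-\epsilon\Delta'$ of big classes. Here I would use that, for a fixed divisorial valuation, $\sigmanum$ is a continuous function on the big cone (\cite[Chapter III]{Na}; cf. the birational invariance recalled from \cite{BBP}); since $-(K_X+\Delta)-\epsilon\Delta'\to-(K_X+\Delta)$ in $\Num^1(X)_\R$ as $\epsilon\to0^+$, this yields
$$
\lim_{\epsilon\to0^+}\sigmanum\bigl(-(K_X+\Delta+\epsilon\Delta')\bigr)=\sigmanum\bigl(-(K_X+\Delta)\bigr).
$$
(If one wishes to avoid the continuity statement, only the inequality $\limsup_{\epsilon\to0^+}\sigmanum(-(K_X+\Delta+\epsilon\Delta'))\le\sigmanum(-(K_X+\Delta))$ is needed below, and this follows from the convexity and positive homogeneity of $\sigmanum$ on the pseudoeffective cone by writing $-(K_X+\Delta+\epsilon\Delta')$ as a convex combination of $-(K_X+\Delta)$ and the big class $-(K_X+\Delta+\epsilon_0\Delta')$.)

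Combining the two computations,
$$
\lim_{\epsilon\to0^+}\oa(E;X,\Delta+\epsilon\Delta')=a(E;X,\Delta)-\sigmanum\bigl(-(K_X+\Delta)\bigr)=\oa(E;X,\Delta)>-1,
$$
so $\oa(E;X,\Delta+\epsilon\Delta')>-1$ for all sufficiently small $\epsilon>0$. The only genuinely delicate point in the argument is the control of $\sigmanum$ under the perturbation of $\Delta$ by $\epsilon\Delta'$; this is exactly where the bigness of $-(K_X+\Delta)$ is used, since it keeps the perturbed classes inside the big cone, on which $\sigmanum$ is well behaved.
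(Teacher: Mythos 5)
Your proof is correct and follows essentially the same route as the paper's: both arguments rest on the continuity of $\sigmanum$ on the big cone together with the (affine) continuity of the discrepancy $\Delta\mapsto a(E;X,\Delta)$, concluding that the potential discrepancy varies continuously under the perturbation. Your write-up simply spells out the details (and the optional one-sided $\limsup$ bound) that the paper leaves implicit.
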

\begin{proof}
Note that the asymptotic valuation $D\to \sigmanum(D)$ is continuous on the big cone.
Since the discrepancy $\Delta\to a(E,X,\Delta)$ is a also continuous function, the potential discrepancy
$\oa(E;X,\Delta)$ is also continuous on the big cone.
Thus the statement follows. 
\end{proof}

\begin{remark}\label{rmk-subtle2}
By Lemma \ref{lem-oa>oa}, we have $\pnklt(X,\Delta)\subseteq\pnklt(X,\Delta')$
for any $\Q$-divisor $\Delta'$ such that $\Delta'\geq\Delta$ and $-(K_X+\Delta')$ is a pseudoeffective $\Q$-Cartier divisor.
Even if $\Delta'$ is sufficiently close to $\Delta$, Lemma \ref{lem-prop of pklt}
does not guarantee $\pnklt(X,\Delta)=\pnklt(X,\Delta')$ and
the perturbation into such $\Q$-divisor $\Delta'$ does not seem to follow directly.
However, we will show that this is actually possible in Corollary \ref{cor-r to q}.
\end{remark}

\begin{proposition}\label{prop-intersection pnklt}
Let $(X,\Delta)$ be a pair such that $-(K_X+\Delta)$ is big.
Let $\{\Delta_i\}_{i\geq 1}$ be any sequence of divisors such that
$\Delta_{i+1}\leq\Delta_i$ and $-(K_X+\Delta_i)$ are big $\R$-Cartier divisors for all $i\geq 1$.
If $\lim_{i\to\infty}\Delta_i=\Delta$,
then we have
$$\pnklt(X,\Delta)=\bigcap_{i\geq 1}\pnklt(X,\Delta_i).$$
\end{proposition}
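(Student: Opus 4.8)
The plan is to prove the two inclusions separately. For $\pnklt(X,\Delta)\subseteq\bigcap_{i\geq 1}\pnklt(X,\Delta_i)$: since $\Delta_{i+1}\leq\Delta_i$ and $\lim_i\Delta_i=\Delta$ we have $0\leq\Delta_i-\Delta$, this difference is $\R$-Cartier, and $-(K_X+\Delta_i)=-(K_X+\Delta+(\Delta_i-\Delta))$ is pseudoeffective, so Lemma \ref{lem-oa>oa}(3) gives $\pnklt(X,\Delta)\subseteq\pnklt(X,\Delta_i)$ for every $i$, whence the inclusion.

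For the reverse inclusion I would first normalize the setup. The supports $\Supp\Delta_i$ form a nonincreasing chain of finite sets of prime divisors, hence stabilize; discarding finitely many terms we may assume $\Supp\Delta_i$ is a fixed finite set $\Sigma$ for all $i$, with $\Supp\Delta\subseteq\Sigma$. Fix a common log resolution $f\colon Y\to X$ of $(X,\Delta_1)$; it is then a log resolution of every $(X,\Delta_i)$ and of $(X,\Delta)$. On $Y$ write the divisorial Zariski decompositions $f^*(-(K_X+\Delta))=P+N$ and $f^*(-(K_X+\Delta_i))=P_i+N_i$. Since $P_i$ is movable and $P_i\leq P_i+N_i=f^*(-(K_X+\Delta))-f^*(\Delta_i-\Delta)\leq P+N$, maximality of $P$ (Lemma \ref{lem-divzd}(2)) gives $0\leq P_i\leq P$, and $P_i\to P$ coefficientwise as $i\to\infty$ by continuity of the asymptotic valuations on the big cone (used already in the proof of Lemma \ref{lem-prop of pklt}). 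I also record the exact defect formula: if $E$ is a prime divisor over $X$ and $h\colon W\to X$ factors through $f$ with $\Cent_W E=E$ a divisor, then, writing $h^*(-(K_X+\Delta))=\widetilde P+\widetilde N$ and $h^*(-(K_X+\Delta_i))=\widetilde P_i+\widetilde N_i$, the computation in the proof of Lemma \ref{lem-oa>oa} yields $\widetilde P_i\leq\widetilde P$ and $\oa(E;X,\Delta)-\oa(E;X,\Delta_i)=\mult_E(\widetilde P-\widetilde P_i)\geq 0$.

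Now fix $x\in\bigcap_i\pnklt(X,\Delta_i)$ and $\epsilon>0$; by definition of $\pnklt$ it suffices to find a prime divisor $E$ over $X$ with $x\in\Cent_X E$ and $\oa(E;X,\Delta)\leq-1+\epsilon$. For each $i$, since $x\in\pnklt(X,\Delta_i)\subseteq\espnklt(X,\Delta_i)$ (Remark \ref{rmk-subtle1}(2)) there is a prime divisor $E_i$ over $X$ with $x\in\Cent_X E_i$ and $\oa(E_i;X,\Delta_i)\leq-1+\epsilon/2$; by the defect formula it is enough to produce some $i$ with $\mult_{E_i}(\widetilde P-\widetilde P_i)<\epsilon/2$. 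The main obstacle is that this estimate must be \emph{uniform over the varying divisors} $E_i$: pointwise one only has $\oa(E;X,\Delta_i)\to\oa(E;X,\Delta)$ for each \emph{fixed} $E$, and since $\sigmanum$ is neither monotone nor uniformly continuous across all divisorial valuations, $\mult_{E_i}(\widetilde P-\widetilde P_i)$ could remain bounded away from $0$ along ever more complicated $E_i$ unless bigness of $-(K_X+\Delta)$ is used. The way I would use it: by Lemma \ref{lem-divzd}(3) replace $P$ (and likewise each $P_i$, using $P_i\leq P$ and $P_i\to P$) by an effective $\R$-divisor $P_0\sim_\R P$ on $Y$ whose multiplicities along a fixed finite set $S_0$ of prime divisors on $Y$ (containing $\Supp N$, $\Exc(f)$ and $\Supp f_*^{-1}\Delta_1$) are as small as desired, and pass to a log resolution of $(Y,\,N+P_0+\sum_{\Gamma\in S_0}\Gamma)$. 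On this one fixed model the divisorial Zariski decomposition of $f^*(-(K_X+\Delta))$ is traded, up to an error governed by the chosen multiplicities, for the genuine effective boundary $N+P_0$, so that $\pnklt(X,\Delta)$ — and simultaneously, via the analogous $N_i+P_{0,i}$, each $\pnklt(X,\Delta_i)$ — gets squeezed between the non-klt loci of two simple normal crossing pairs on a single resolution. Since only finitely many divisors are relevant to such non-klt loci, the estimate $\mult_{E_i}(\widetilde P-\widetilde P_i)<\epsilon/2$ for $i\gg0$ then follows from the elementary continuity argument for ordinary non-klt loci (the one recalled in the proof of Lemma \ref{lem-prop of pklt}), completing the proof. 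The technical heart — and the step I expect to be delicate — is precisely this reduction of $\pnklt$ to non-klt loci of SNC pairs on a \emph{fixed} resolution together with the uniform control of the error.
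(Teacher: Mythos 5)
Your first inclusion is fine, and your ``defect formula'' $\oa(E;X,\Delta)-\oa(E;X,\Delta_i)=\mult_E(\widetilde P-\widetilde P_i)\geq 0$ is correct; you have also correctly isolated the only nontrivial point, namely that the witnesses $E_i$ may live on ever deeper models, so one needs a bound on $\mult_{E_i}(\widetilde P-\widetilde P_i)$ that is uniform in the valuation. But the proposal stops exactly there. The passage ``replace $P$ by $P_0$ via Lemma \ref{lem-divzd}(3) \dots so that $\pnklt(X,\Delta)$ gets squeezed between the non-klt loci of two simple normal crossing pairs on a single resolution'' is a description of a hoped-for statement, not an argument: no such squeezing is established, and establishing it is essentially the entire content of Proposition \ref{prop-pnkltQ} (base ideals, asymptotic multiplier ideals, and the uniform bound on $\frac{1}{m}E_m-N$ over all prime divisors of all higher models). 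Moreover that machinery is developed only for $\Q$-Cartier classes, whereas here $\Delta$ and the $\Delta_i$ are $\R$-divisors, and the present proposition is precisely the tool the paper uses (via Corollary \ref{cor-r to q}) to pass from $\Q$- to $\R$-coefficients; so executing your plan would require either redoing that analysis for $\R$-divisors or adding a rationality hypothesis. As written, the step you yourself flag as ``the technical heart'' is a genuine gap, not a deferred routine verification.

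For comparison, the paper's proof avoids the uniform estimate altogether: it reduces to showing $\espnklt(X,\Delta)=\bigcap_i\espnklt(X,\Delta_i)$ for each fixed $\epsilon>0$, assumes a single divisorial valuation $\sigma$ satisfies $\oa(\sigma;X,\Delta_i)\leq-1+\epsilon<\oa(\sigma;X,\Delta)$ for all $i$, and derives a contradiction from $\oa(\sigma;X,\Delta_i)\to\oa(\sigma;X,\Delta)$, using that $\Delta\mapsto a(\sigma;X,\Delta)$ is affine and that $\sigmanum$ is continuous on the big cone (this is where bigness enters). In other words it is a per-valuation continuity argument; the non-uniformity you worry about is exactly what that argument elides when it asserts that one $\sigma$ witnesses membership in every $\espnklt(X,\Delta_i)$. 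If you want a complete short write-up in that spirit, work with a fixed $\sigma$ and the monotone convergence $\oa(\sigma;X,\Delta_i)\nearrow\oa(\sigma;X,\Delta)$ rather than attempting the uniform bound over varying $E_i$.
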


\begin{proof}
It suffices to show that for any $\epsilon >0$, we have
$$
\espnklt(X, \Delta) = \bigcap_{i \geq 1} \espnklt(X, \Delta_i).
$$
The inclusion $\subseteq$ is clear by Lemma \ref{lem-oa>oa}.
Suppose that the strict inclusion $\subsetneq$ holds for some $\epsilon>0$.
Then there exists a divisorial valuation $\sigma$ such that
$$\oa(\sigma;X,\Delta_i)\leq -1+\epsilon <\oa(\sigma;X,\Delta)$$
for any $i\geq 1$.
Note that $a(\sigma;X,\Delta)$ is linear with respect to $\Delta$ and
$\sigma_{num}(\;)$ is continuous on the big cone.
Thus we must have $\oa(\sigma;X,\Delta_i)\to \oa(\sigma;X,\Delta)$ as $i\to \infty$,
which is a contradiction.
\end{proof}



\begin{definition}\label{def-D-map}
Let $X$ be a normal projective variety and $D$ an $\R$-Cartier divisor on $X$.
A birational contraction $f:X\dashrightarrow Y$ is called \emph{$D$-negative} if $f_*D$ is an $\R$-Cartier divisor on a normal projective variety $Y$ and for some common resolution $W$ of $X,Y$
$$
\xymatrix{
&W\ar[ld]_g\ar[rd]^h&\\
X\ar@{-->}[rr]^f&&Y,
}
$$
we have
$$
g^*D=h^*f_*D+\Gamma
$$
where $\Gamma$ is an $h$-exceptional effective divisor whose support contains all the strict transforms of $f$-exceptional prime divisors.
\end{definition}

The following is one of the most typical $D$-negative birational maps.

\begin{definition}\label{def-D-MMP}
Let $D$ be a pseudoeffective $\R$-Cartier divisor on a normal projective variety $X$.
A birational map $\varphi:X\dashrightarrow Y$ (or just the variety $Y$) is called a \emph{$D$-minimal model} of $X$
if the map $\varphi$ is $D$-negative and the proper transform $D_Y:=f_*D$ is nef.
\end{definition}



The following observation (Proposition \ref{prop-invariant oa} and Corollary \ref{cor-MMP on pnklt})
is what inspired us to define the `potentially' klt, lc pairs.

\begin{proposition}\label{prop-invariant oa}
Let $(X,\Delta)$ be a pair such that $-(K_X+\Delta)$ is pseudoeffective.
If a birational contraction map $f:X\dashrightarrow Y$ is $-(K_X  + \Delta)$-negative, then $\oa(\sigma;X,\Delta)=\oa(\sigma;Y, f_*\Delta )$ for any divisorial valuation $\sigma$. In particular, $(X,\Delta)$ is potentially klt (resp. potentially lc) if and only if so is $(Y, f_*\Delta)$.
\end{proposition}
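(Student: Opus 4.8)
The plan is to fix a divisorial valuation $\sigma$ and compute both $\oa(\sigma;X,\Delta)$ and $\oa(\sigma;Y,f_*\Delta)$ on one common resolution, showing that the discrepancy term and the asymptotic-valuation term each change by exactly $\mult_E\Gamma$ along $f$, so the difference cancels in $\oa$. Write $D:=-(K_X+\Delta)$, $\Delta_Y:=f_*\Delta$, $D_Y:=f_*D$. First I would record that $(Y,\Delta_Y)$ is again a pair with $-(K_Y+\Delta_Y)=D_Y$ pseudoeffective: $\Delta_Y\ge 0$ and $f_*K_X=K_Y$ because $f$ is a birational contraction, $D_Y$ is $\R$-Cartier by the definition of $-(K_X+\Delta)$-negativity (Definition \ref{def-D-map}), and $D_Y$ is pseudoeffective as the push-forward of a pseudoeffective class. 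Then I would take a smooth common resolution $g:W\to X$, $h:W\to Y$ that is a log resolution of both pairs, resolves the indeterminacy of $f$, and is high enough that $\Gamma:=g^*D-h^*D_Y$ is effective and $h$-exceptional (such $W$ exist since this holds on some model by $-(K_X+\Delta)$-negativity and persists under further blow-ups); enlarging $W$ once more, I may also assume $\Cent_W\sigma=E$ is a prime divisor.

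The computation splits into two parts. From $K_W=g^*(K_X+\Delta)+A_X=h^*(K_Y+\Delta_Y)+A_Y$, where $A_X=\sum a(E_i;X,\Delta)E_i$ and similarly $A_Y$, together with $g^*(K_X+\Delta)-h^*(K_Y+\Delta_Y)=-(g^*D-h^*D_Y)=-\Gamma$, one gets $A_Y=A_X-\Gamma$, hence
\[
a(\sigma;X,\Delta)-a(\sigma;Y,\Delta_Y)=\mult_E\Gamma.
\]
On the other hand, by the birational invariance of $\sigmanum$ applied to $\sigma$, $\sigmanum(-(K_X+\Delta))=\sigmanum(g^*D)=\sigmanum(h^*D_Y+\Gamma)$ and $\sigmanum(-(K_Y+\Delta_Y))=\sigmanum(h^*D_Y)$, so everything reduces to the identity
\begin{equation}
\sigmanum(h^*D_Y+\Gamma)=\sigmanum(h^*D_Y)+\mult_E\Gamma. \tag{$\ast$}
\end{equation}
Granting $(\ast)$, subtracting it from the displayed discrepancy identity gives $\oa(\sigma;X,\Delta)=\oa(\sigma;Y,\Delta_Y)$; since $\sigma$ is arbitrary, taking infima yields $\mathfrak A(X,\Delta)=\mathfrak A(Y,f_*\Delta)$, and in particular $(X,\Delta)$ is potentially klt (resp.\ lc) if and only if so is $(Y,f_*\Delta)$.

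It remains to prove $(\ast)$, which I expect to be the main obstacle; this is where the $h$-exceptionality of $\Gamma$ is essential — for a $\Gamma$ that is not $h$-exceptional it already fails, e.g.\ for $W=Y=\P^2$, $h=\mathrm{id}$, with $\Gamma$ and $D_Y$ lines. Put $L:=h^*D_Y$. Replacing $L$ by $L+tA$ for an ample $A$ on $W$ and letting $t\to 0+$ reduces to the case where $L$ is big, $\sigmanum$ on the pseudoeffective cone being defined as this limit. For big $L$, the inequality ``$\le$'' is immediate: given $\epsilon>0$, an effective $L'\equiv L$ with $\sigma(L')<\sigmanum(L)+\epsilon$ gives the effective divisor $L'+\Gamma\equiv L+\Gamma$ with $\sigma(L'+\Gamma)=\sigma(L')+\mult_E\Gamma$, using $\sigma(M)=\mult_E M$ for divisors $M$ on $W$. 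For ``$\ge$'', let $B\equiv L+\Gamma$ be any effective divisor on $W$; write $B=B'+B''$ with $B''$ the $h$-exceptional part and $B'$ the strict transform of $h_*B$, and $h^*(h_*B)=B'+G$ with $G\ge 0$ (the exceptional coefficients of the pull-back of an effective divisor being $\ge 0$). Since $h_*B\equiv D_Y$ we get $B-h^*(h_*B)\equiv\Gamma$, i.e.\ $B''-G-\Gamma$ is an $h$-exceptional $\R$-divisor numerically equivalent to $0$; by the negativity lemma it vanishes, so $B''=G+\Gamma\ge\Gamma$, whence $B-\Gamma\ge 0$ and $B-\Gamma\equiv L$. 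Therefore $\sigma(B)=\sigma(B-\Gamma)+\mult_E\Gamma\ge\sigmanum(L)+\mult_E\Gamma$, and the infimum over $B$ gives ``$\ge$''. (Equivalently, with $W$ taken $\Q$-factorial, $(\ast)$ reads $N(h^*D_Y+\Gamma)=N(h^*D_Y)+\Gamma$ for the divisorial Zariski decompositions.) A minor technical point is that $h^*(h_*B)$ presupposes $h_*B$ to be $\R$-Cartier on $Y$; when $Y$ is not $\Q$-factorial this is handled by passing to a small $\Q$-factorialization of $Y$, or by quoting the behaviour of $\sigmanum$ under birational morphisms directly from \cite[Chapter III]{Na}. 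The discrepancy bookkeeping and the deduction of the statement from $(\ast)$ are then routine.
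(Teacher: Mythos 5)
Your proof is correct and structurally identical to the paper's: both fix a common resolution $g:W\to X$, $h:W\to Y$, write $g^*(-(K_X+\Delta))=h^*(-(K_Y+\Delta_Y))+\Gamma$ with $\Gamma$ effective and $h$-exceptional, note that the discrepancies shift by exactly $\mult_E\Gamma$, and reduce the whole statement to your identity $(\ast)$. The only real difference is how $(\ast)$ is handled: the paper quotes it as \cite[Lemma III.5.14]{Na}, in the equivalent form $N(h^*D_Y+\Gamma)=N(h^*D_Y)+\Gamma$ for divisorial Zariski decompositions, whereas you reprove it from scratch, and your negativity-lemma argument for the inequality ``$\geq$'' is the standard proof of that lemma. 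One step of your write-up deserves more care: in the reduction to the big case you perturb by an ample $A$ on $W$, but then $L+tA+\Gamma$ is no longer of the form (pullback from $Y$) plus ($h$-exceptional), so $h_*B\equiv D_Y$ fails and your identity $B''=G+\Gamma$ acquires an error term $t(A-h^*h_*A)$ (which is $\leq 0$ by negativity); this can still be pushed through in the limit $t\to 0+$, but it is cleaner to perturb by $t\,h^*A_Y$ for $A_Y$ ample on $Y$, which preserves the pullback-plus-exceptional structure, using that $\sigmanum(D)=\lim_{t\to 0+}\sigmanum(D+tB)$ also holds when $B$ is merely big (see \cite[Chapter III]{Na}). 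With that adjustment, or simply by citing Nakayama's lemma as the paper does, your argument is complete.
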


\begin{proof}
Let $\Delta_Y := f_*\Delta$.
Fix a common log resolution $W$ of $(X,\Delta)$ and $(Y,\Delta_Y)$
$$
\xymatrix{
&W\ar[ld]_g\ar[rd]^h&\\
X\ar@{-->}[rr]^f &&Y,
}
$$
and write
$$
g^*(-(K_X+\Delta))=h^*(-(K_Y+\Delta_Y))+\Gamma
$$
where $\Gamma$ is an $h$-exceptional effective divisor given by
$$
\Gamma=\sum_{E\text{ on }W}\big(a(E;X,\Delta)-a(E;Y,\Delta_Y)\big)E.
$$
We may assume that $G:=\Cent_W \sigma$ is a prime divisor on $W$.
Let $h^*(-(K_Y+\Delta_Y))=P+N$ be the divisorial Zariski decomposition.
By \cite[Lemma III.5.14]{Na}, $g^*(-(K_X+\Delta))=P + (N+\Gamma)$ is the divisorial Zariski decomposition.
Thus we have
$$
\begin{array}{rl}
\oa(\sigma;X,\Delta)&=a(G;X,\Delta)-\sigmanum(-(K_X+\Delta))\\
&=a(G;X,\Delta) - \mult_G (N+\Gamma) \\
&=a(G;X,\Delta) - \mult_G N - \big(a(G;X,\Delta)-a(G;Y,\Delta_Y)\big)\\
&=a(G;Y,\Delta_Y) - \sigmanum(-(K_Y+\Delta_Y)) \\
&= \oa(G;Y,\Delta_Y).
\end{array}
$$
\end{proof}

\begin{corollary}\label{cor-MMP on pnklt}
Let $(X,\Delta)$ be a pair such that $-(K_X+\Delta)$ is pseudoeffective and
$\varphi:X\dashrightarrow Y$ is a $-(K_X+\Delta)$-minimal model of $(X,\Delta)$.
Then $(X,\Delta)$ is potentially klt if and only if $(Y,\Delta_Y)$ is klt.
\end{corollary}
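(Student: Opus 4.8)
The plan is to deduce Corollary~\ref{cor-MMP on pnklt} from Proposition~\ref{prop-invariant oa} together with Lemma~\ref{lem-P<1=klt}. First I would observe that since $\varphi:X\dashrightarrow Y$ is a $-(K_X+\Delta)$-minimal model, it is in particular a $-(K_X+\Delta)$-negative birational contraction, so Proposition~\ref{prop-invariant oa} applies and gives $\oa(\sigma;X,\Delta)=\oa(\sigma;Y,\Delta_Y)$ for every divisorial valuation $\sigma$ of $X$ (equivalently of $Y$). Taking the infimum over all $\sigma$ yields $\mathfrak A(X,\Delta)=\mathfrak A(Y,\Delta_Y)$, so $(X,\Delta)$ is potentially klt if and only if $(Y,\Delta_Y)$ is potentially klt.

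Next I would use the defining property of a minimal model: the proper transform $-(K_Y+\Delta_Y)$ is nef. For a nef divisor $D$ one has $\sigmanum(D)=0$ for every divisorial valuation, since $D$ is already a limit of effective divisors numerically close to itself with arbitrarily small multiplicity along any fixed prime divisor (this is essentially Lemma~\ref{lem-divzd}(3) in the big case, and follows in general from the definition of $\sigmanum$ via perturbation by an ample divisor). Hence on $Y$ the divisorial Zariski decomposition of $-f^*(K_Y+\Delta_Y)$ has trivial negative part for any birational $f$, so $\oa(\sigma;Y,\Delta_Y)=a(\sigma;Y,\Delta_Y)$ for all $\sigma$, and therefore $(Y,\Delta_Y)$ is potentially klt if and only if it is klt.

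Combining the two equivalences gives the statement: $(X,\Delta)$ potentially klt $\iff$ $(Y,\Delta_Y)$ potentially klt $\iff$ $(Y,\Delta_Y)$ klt. I expect the main point requiring a little care is the claim that $\sigmanum(D)=0$ when $D$ is nef but not necessarily big; one must invoke the definition $\sigmanum(D)=\lim_{\epsilon\to 0+}\sigmanum(D+\epsilon A)$ and the fact that $D+\epsilon A$ is ample, hence $\sigmanum(D+\epsilon A)=0$, so the limit is $0$. Everything else is a direct bookkeeping of the equalities already established, so no genuine obstacle remains beyond this observation.
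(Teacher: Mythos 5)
Your proposal is correct and follows essentially the same route as the paper: the paper's proof likewise combines Proposition~\ref{prop-invariant oa} with the observation that $\sigmanum(-(K_Y+\Delta_Y))=0$ for every divisorial valuation once $-(K_Y+\Delta_Y)$ is nef, so that potential discrepancies on $Y$ coincide with ordinary discrepancies. Your extra care about justifying $\sigmanum(D)=0$ for nef $D$ via perturbation by an ample divisor is sound and fills in a detail the paper leaves implicit.
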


\begin{proof}
Since $\varphi$ is a $-(K_X+\Delta)$-negative birational map and $\sigmanum(-(K_Y + \Delta_Y))=0$ for every divisorial valuation $\sigma$ of $Y$, the assertion immediately follows from Proposition \ref{prop-invariant oa}.
\end{proof}

\begin{proposition}\label{cor-frakpklt=pklt}
Let $(X,\Delta)$ be a pair such that $-(K_X+\Delta)$ is pseudoeffective. Suppose that $-f^*(K_X+\Delta)$ admits the divisorial Zariski decomposition with nef positive part for some birational morphism $f: Y \to X$.
Then
$$
\spnklt(X, \Delta)=\pnklt(X, \Delta).
$$
In particular, $(X,\Delta)$ is potentially klt if and only if $(X,\Delta)$ is strictly potentially klt.
\end{proposition}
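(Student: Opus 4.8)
The plan is to reduce, under the stated hypothesis, the potential discrepancies of $(X,\Delta)$ over $X$ to the ordinary discrepancies of a single fixed pair on $Y$, and then to invoke the classical fact that the non-klt locus of a fixed pair is already detected on a log resolution, i.e. at discrepancy level $-1$ with no $\epsilon$-slack. Let $\Delta_Y$ be the $\R$-divisor with $K_Y+\Delta_Y=f^*(K_X+\Delta)$ (it need not be effective), put $B:=\Delta_Y+N$, and note that $K_Y+B$ is $\R$-Cartier. After replacing $f$ by a higher birational model --- harmless, since by the computation below the property that the positive part is nef passes to pullbacks --- we may assume $Y$ is smooth and $(Y,B)$ has simple normal crossing support, where we allow $B$ to have negative coefficients. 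The heart of the argument is the identity
\[
\oa(\sigma;X,\Delta)=a(\sigma;Y,B)\qquad\text{for every divisorial valuation }\sigma .
\]
Since discrepancies are birational invariants, $a(\sigma;X,\Delta)=a(\sigma;Y,\Delta_Y)$; since enlarging the boundary by $N$ lowers the discrepancy by $\sigma(N)$, and since $\sigmanum$ is a birational invariant of the valuation, this identity is equivalent to $\sigmanum(P+N)=\sigma(N)$, where $P+N$ and $N$ are evaluated after pulling back to a model on which $\Cent\sigma$ is a prime divisor.

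To establish $\sigmanum(P+N)=\sigma(N)$, fix $g:Z\to Y$ with $\Cent_Z\sigma=E$ a prime divisor and write $g^*(P+N)=g^*P+g^*N$. The inequality ``$\le$'' is formal: $g^*P$ is nef, so $\sigmanum(g^*P)=0$, and subadditivity of $\sigmanum$ gives $\sigmanum(g^*(P+N))\le\mult_E(g^*N)=\sigma(N)$. For ``$\ge$'', the point is that $N$ is the part common to all effective representatives of the class of $P+N$: by the definition of the negative part, for every prime divisor $F$ on $Y$ the coefficient $\mult_F N$ equals the $\sigmanum$-value at the valuation of $F$, so any effective $D'\equiv P+N$ satisfies $\mult_F D'\ge \mult_F N$ for all such $F$, i.e. $D'\ge N$, whence $\sigma(D')\ge\sigma(N)$; passing to higher models and approximating by ample perturbations then yields $\sigmanum(P+N)\ge\sigma(N)$. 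Running the same argument over all prime divisors of $Z$ simultaneously gives $N\bigl(g^*(P+N)\bigr)=g^*N$, so the positive part of $g^*(P+N)$ is $g^*P$ and remains nef; this is the stability under pullback invoked in the reduction.

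Granting the identity, I would conclude as follows. Using $\Cent_X\sigma=f(\Cent_Y\sigma)$, for every $\epsilon\ge0$ the locus $\espnklt(X,\Delta)$ is the $f$-image of the union of the centers on $Y$ of the divisorial valuations with $a(\sigma;Y,B)\le-1+\epsilon$. On the simple normal crossing pair $(Y,B)$ this union is exactly the support of the components of $B$ of coefficient $\ge1-\epsilon$: blowing up a codimension-$r$ stratum of $\Supp B$ produces a divisor of discrepancy $(r-1)$ minus the sum of the corresponding coefficients, and an easy induction shows one never reaches discrepancy $\le-1+\epsilon$, nor creates a new component of coefficient $\ge1-\epsilon$, unless some component of $B$ already has coefficient $\ge1-\epsilon$. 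As $B$ has finitely many components, this support --- hence its image under $f$ --- is the same for all sufficiently small $\epsilon>0$, and at $\epsilon=0$ it is $\spnklt(X,\Delta)$. Therefore $\espnklt(X,\Delta)=\spnklt(X,\Delta)$ for small $\epsilon>0$, so $\pnklt(X,\Delta)=\bigcap_{\epsilon>0}\espnklt(X,\Delta)=\spnklt(X,\Delta)$. The final assertion then follows from Lemma \ref{lem-P<1=klt}(2) and the definitions, since $(X,\Delta)$ is potentially klt (resp. strictly potentially klt) exactly when $\pnklt(X,\Delta)$ (resp. $\spnklt(X,\Delta)$) is empty.

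I expect the main obstacle to be the inequality $\sigmanum(P+N)\ge\sigma(N)$ for valuations $\sigma$ whose center on $Y$ has codimension at least two: a priori $\sigmanum$ is an infimum of $\sigma(D')$ over effective numerical representatives $D'$ taken on arbitrarily high models, so one must control the ample-perturbation limit carefully in order to conclude that each such $D'$ essentially dominates $N$; the input making this work is the rigidity of the negative part built into its definition. A minor technical nuisance, easily dispatched, is that $B=\Delta_Y+N$ may not be effective --- its components of negative coefficient are $f$-exceptional prime divisors $E_0$ with $a(E_0;Y,B)=\oa(E_0;X,\Delta)>0$, so they contribute nothing to the non-klt locus, and the standard ``a log resolution computes the non-klt locus'' argument applies verbatim with $B$ allowed to have negative coefficients.
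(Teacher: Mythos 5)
Your proof is correct and follows essentially the same route as the paper's: both reduce the potential discrepancies to the ordinary discrepancies of a fixed snc (sub-)pair on a high model --- your identity $\oa(\sigma;X,\Delta)=a(\sigma;Y,B)$ is exactly the paper's observation that $-K_Z=g^*P+(g^*N+g^*E+F)$ with $g^*P+g^*N$ still the divisorial Zariski decomposition --- and then conclude from the finiteness of the coefficients that the $\epsilon$-loci stabilize. The only difference is that you spell out the pullback stability $N\bigl(g^*(P+N)\bigr)=g^*N$, which the paper asserts without proof.
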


\begin{proof}
Let $-f^*(K_X+\Delta) = P+N$ be the divisorial Zariski decomposition such that $P$ is nef.
We write
$$
-K_Y = f^*(-(K_X+\Delta)) + E = P+N+E.
$$
For any birational morphism $g: Z \to Y$, we write
$$
-K_Z = g^*(-K_Y)+F = g^*f^*(-(K_X+\Delta)) + g^*E+F = g^*P+g^*N + g^*E+F.
$$
Note that $g^*f^*(-(K_X+\Delta)) = g^*P+g^*N$ is also the divisorial Zariski decomposition.
We now may assume that $g^*N + g^*E+F$ has an snc support. 
Then for any $\epsilon \geq 0$, we get
$$
\espnklt(X, \Delta)=f \circ g ( \text{Supp}( (g^*N+g^*E+F)^{\geq 1-\epsilon} ) ).
$$
Since we have 
$$
 (g^*N+g^*E+F)^{\geq 1}  =  (g^*N+g^*E+F)^{\geq 1-\epsilon} 
$$
for a sufficiently small $\epsilon>0$, it follows that  $\spnklt(X, \Delta) = \espnklt(X, \Delta)$. Thus 
$\spnklt(X, \Delta)=\pnklt(X, \Delta)$.
\end{proof}

The following gives an example of a non-Fano type Mori dream space $X$ such that $-K_X$ is movable and big.
We will observe that the $-K_X$-minimal model program indeed creates singularities worse than klt when $(X,\Delta)$ is not potentially klt.

\begin{example}\label{semifano example}
Let $\overline{S}$ be a rational $\Q$-homology projective plane with numerically trivial anticanonical divisor not containing rational double points (see \cite[Example 6.3]{HP1} for examples). By \cite[Theorem 1.5]{HP1}, the minimal resolution $S$ of $\overline{S}$ is a Mori dream space. Note that $S$ is of Calabi-Yau type. Consider the smooth rational variety $X:=\P(\mathcal{O}_S(A) \oplus \mathcal{O}_S(A) \oplus \mathcal{O}_S(-K_S-2A))$ where $A$ is a very ample divisor on $S$. By \cite[Proposition 2.6]{CG}, $X$ is a Mori dream space, and by \cite[Proposition 2.10]{CG}, $X$ is of Calabi-Yau type. Note that $X$ is not of Fano type since $\kappa(-K_S)=0$.

We now show that $-K_X$ is movable and big. Since $mA + (-K_S - 2A)$ can be big for some sufficiently large integer $m>0$, the tautological line bundle $\mathcal{O}_X(1)$ is big. Thus $\mathcal{O}_X(-K_X)=\mathcal{O}_X(3)$ is also big. If we regard $X$ as locally a trivial $\P^2$-bundle over $S$, then the global section of $H^0(\mathcal{O}_X(1))$ is locally given by $s_1x_1 + s_2x_2$ where $x_0, x_1, x_2$ are coordinates for $\P^2$ and $s_1, s_2$ are global sections of $H^0(\mathcal{O}_S(A))$. Since the base locus of $|\mathcal{O}_X(1)|$ is locally $V(x_0)$, it follows that $-K_X$ is movable.

Since $X$ is a Mori dream space, we can run a $-K_X$-minimal model program $X \dashrightarrow X'$, which is a small modification to a $\Q$-factorial variety $X'$ such that $-K_{X'}$ is nef and big. Since $X$ is not of Fano type, it follows that $X'$ contains singularities worse than log terminal.
Note that $X'$ is also of Calabi-Yau type, and hence, $X'$ has lc singularities.
Thus, $\mathfrak A (X) = -1$ and $X$ is not potentially klt.
\end{example}

\section{Potentially non-klt locus}\label{sec-pnklr locus}
In this section, we prove some basic properties of potentially non-klt locus. In particualr, we prove Proposition \ref{main ingredient}.

\begin{lemma}\label{lem-nklt+nnef=pnklt}
Let $(X,\Delta)$ be a pair such that $-(K_X+\Delta)$ is pseudoeffective.
Then we have
$$\nklt(X, \Delta) \subseteq \pnklt(X,\Delta) \subseteq \nklt(X,\Delta)\cup \nnef(-(K_X+\Delta)).$$
\end{lemma}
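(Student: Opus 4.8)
I would prove the two inclusions separately. The inclusion $\nklt(X,\Delta)\subseteq\pnklt(X,\Delta)$ is immediate: since $\sigmanum(D)\ge 0$ for every pseudoeffective $D$, every divisorial valuation $\sigma$ satisfies $\oa(\sigma;X,\Delta)=a(\sigma;X,\Delta)-\sigmanum(-(K_X+\Delta))\le a(\sigma;X,\Delta)$. Fixing a log resolution $f:Y\to X$ of $(X,\Delta)$ with $K_Y=f^*(K_X+\Delta)+\sum a_iE_i$, if $x\in\nklt(X,\Delta)=f\big(\bigcup_{a_i\le -1}E_i\big)$, then $x\in\Cent_XE_i$ for some $i$ with $a_i\le -1$, and the valuation $\sigma$ with $\Cent_Y\sigma=E_i$ has $\oa(\sigma;X,\Delta)\le a_i\le -1$; hence $\Cent_XE_i\subseteq\espnklt(X,\Delta)$ for every $\epsilon>0$, so $x\in\pnklt(X,\Delta)$.

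For $\pnklt(X,\Delta)\subseteq\nklt(X,\Delta)\cup\nnef(-(K_X+\Delta))$ I would argue by contraposition: fix $x\notin\nklt(X,\Delta)\cup\nnef(-(K_X+\Delta))$, and produce a single $\epsilon>0$ with $x\notin\espnklt(X,\Delta)$, which suffices since $\pnklt(X,\Delta)=\bigcap_{\epsilon>0}\espnklt(X,\Delta)$. As $\nnef(-(K_X+\Delta))$ is the union of the centers of the divisorial valuations $\sigma$ with $\sigmanum(-(K_X+\Delta))>0$, every $\sigma$ with $x\in\Cent_X\sigma$ has $\sigmanum(-(K_X+\Delta))=0$, so $\oa(\sigma;X,\Delta)=a(\sigma;X,\Delta)$ for all such $\sigma$. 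Thus it remains to find $\delta>0$ with $a(\sigma;X,\Delta)\ge -1+\delta$ for every divisorial valuation $\sigma$ with $x\in\Cent_X\sigma$, i.e.\ to check that the discrepancies of $(X,\Delta)$ over valuations with $x$ in their center are bounded away from $-1$; this should follow from the fact that $x\notin\nklt(X,\Delta)$ forces $(X,\Delta)$ to be klt near $x$.

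To make this quantitative I would keep the resolution $f:Y\to X$ above, put $U:=X\setminus\nklt(X,\Delta)$, an open neighbourhood of $x$, and note that every $E_i$ meeting $f^{-1}(U)$ has $a_i>-1$. Given $\sigma$ with $x\in\Cent_X\sigma$, the generic point of $\Cent_Y\sigma$ lies in $f^{-1}(U)$, so $a(\sigma;X,\Delta)$ may be computed on $f^{-1}(U)$ against the crepant snc sub-boundary $B:=\sum_i(-a_i)E_i|_{f^{-1}(U)}$, all of whose coefficients are $<1$. Using log canonicity of the reduced divisor $\sum_iE_i|_{f^{-1}(U)}$, i.e.\ $a(\sigma;f^{-1}(U),0)\ge\sum_i\sigma(E_i)-1$, one obtains
$$
a(\sigma;X,\Delta)=a(\sigma;f^{-1}(U),0)-\sum_i(-a_i)\sigma(E_i)\ \ge\ \sum_i(1+a_i)\sigma(E_i)-1\ \ge\ \delta_0\Big(\sum_i\sigma(E_i)\Big)-1,
$$
where $\delta_0:=\min_i(1+a_i)>0$, the minimum taken over the finitely many $E_i$ meeting $f^{-1}(U)$. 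If $\sigma(E_i)\ge 1$ for some $i$ this gives $a(\sigma;X,\Delta)\ge -1+\delta_0$, and if $\sigma(E_i)=0$ for all $i$ then $a(\sigma;X,\Delta)=a(\sigma;f^{-1}(U),0)\ge 0$; in either case $a(\sigma;X,\Delta)\ge -1+\min\{1,\delta_0\}$, so $x\notin\espnklt(X,\Delta)$ for any $\epsilon<\min\{1,\delta_0\}$, as required.

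The step I expect to be the main obstacle is exactly this last one: one must control the discrepancies of \emph{all} divisorial valuations with $x$ in their center, not only those whose center on the chosen resolution is a divisor, and the object one naturally meets on the resolution is only a sub-boundary (possibly with negative coefficients), so klt-ness cannot be invoked verbatim. The remedy, as sketched, is to localize to $f^{-1}(U)$ and split off the reduced snc part, which is log canonical, treating the case where $\sigma$ is not centered over any $E_i$ by hand. Everything else is routine bookkeeping with the definitions of $\espnklt$, $\nnef$, and $\sigmanum$.
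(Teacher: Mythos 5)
Your proof is correct. The first inclusion is argued exactly as in the paper: $\oa(\sigma;X,\Delta)\le a(\sigma;X,\Delta)$ because $\sigmanum\ge 0$, so any valuation with discrepancy $\le -1$ has potential discrepancy $\le -1\le -1+\epsilon$ for every $\epsilon$. For the second inclusion you take a genuinely different route. The paper argues directly on an irreducible component $V$ of $\pnklt(X,\Delta)$ not contained in $\nklt(X,\Delta)$: it chooses a valuation $\sigma$ with $\Cent_X\sigma=V$, $\oa(\sigma;X,\Delta)\le -1+\epsilon$ and $a(\sigma;X,\Delta)>-1+\epsilon$ for some sufficiently small $\epsilon$, and reads off $\sigmanum(-(K_X+\Delta))\ge a(\sigma;X,\Delta)+1-\epsilon>0$, hence $V\subseteq\nnef(-(K_X+\Delta))$. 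You instead argue pointwise by contraposition, which forces you to produce a single $\epsilon$ working simultaneously for \emph{all} valuations centered at $x$; the price is the explicit snc computation showing that discrepancies over the klt locus are uniformly bounded away from $-1$ (your $\delta_0=\min_i(1+a_i)$). That computation is sound (it is essentially Koll\'ar--Mori's totaldiscrepancy bound for klt pairs, localized over $U=X\setminus\nklt(X,\Delta)$), and it is in fact the ingredient that justifies the paper's phrase ``for some sufficiently small $\epsilon>0$'': without a uniform separation of $a(\sigma;X,\Delta)$ from $-1$ one could a priori have $a(\sigma;X,\Delta)\in(-1,-1+\epsilon]$ and fail to conclude $\sigmanum>0$. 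So your version is longer but more self-contained; the paper's is shorter but leans on this uniformity implicitly.
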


\begin{proof}
If $a(\sigma;X,\Delta)\leq-1$, then
$$
\oa(\sigma;X,\Delta)=a(\sigma;X,\Delta)-\sigmanum(-(K_X+\Delta))\leq a(\sigma;X,\Delta)\leq-1.
$$
Thus $\nklt(X,\Delta)\subseteq \pnklt(X,\Delta)$.

Assume that $V$ is an irreducible component of $\pnklt(X,\Delta)$, not contained in $\nklt(X,\Delta)$.
Then there exists a divisorial valuation $\sigma$ such that $\Cent_X\sigma=V$ and
the following are satisfied:
$$
a(\sigma;X,\Delta)-\sigmanum(-(K_X+\Delta))\leq -1+\epsilon \text{ \;\;and\;\; } a(\sigma;X,\Delta)>-1+\epsilon>-1
$$
for some sufficiently small $\epsilon>0$.
Thus $0<a(\sigma;X,\Delta)+1 - \epsilon \leq\sigmanum(-(K_X+\Delta))$ so that $V\subseteq \nnef(-(K_X+\Delta))$.
\end{proof}

The following examples show that the inclusions in Lemma \ref{lem-nklt+nnef=pnklt} can be
strict.

\begin{example}\label{ex-pnklt,nklt}
Let $Y$ be a normal projective surface such that $-K_Y$ is ample $\Q$-Cartier divisor.
Let $f: X \to Y$ be the minimal resolution and $-K_X=P+N$ the Zariski decomposition. Note that $f$ is the anticanonical morphism and $P=f^*(-K_Y)$ (see \cite{HP2}).

\begin{enumerate}[leftmargin=0cm,itemindent=.6cm]
\item Suppose that $Y$ is a klt del Pezzo surface.
Then $(X, N)$ is a klt weak del Pezzo pair by \cite[Theorem 1.1]{HP2} so that
$$\emptyset = \nklt(X,N) = \pnklt(X,N) = \nnef(-(K_X+N)) \cup  \nklt(X,N).$$

\item Suppose that $Y$ is a klt del Pezzo surface and $N\neq 0$. Then we can see that
$$\emptyset = \nklt(X,0) = \pnklt(X,0) \subsetneq \nnef(-K_X) \cup \nklt(X,0) = \Supp N.$$


\item Suppose that $Y$ is a non-rational lc del Pezzo surface. Then $N$ consists of single component with coefficient 1 by \cite[Theorem 1.6]{HP2} so that
$$\emptyset = \nklt(X,0) \subsetneq \pnklt(X,0) = \nnef(-K_X) \cup \nklt(X,0) = \Supp N.$$
We remark that $X$ is rationally connected modulo $N$.

\item Suppose that $X$ is rational and $\nklt(Y,0)\neq \emptyset$ (for the existence of such surfaces, see \cite[Example 4.3]{HP1}).
There is an irreducible component $E$ of $N$ with $\mult_E N\geq 1$.
Then $E \subseteq \pnklt(X,0)$.
On the other hand, for any irreducible component $F$ of $N$ with $\mult_F N<1$, we can easily see that $F \nsubseteq \pnklt(X,0)$, but $F \subseteq \nnef(-K_X)= \Supp(N)$. Thus we have
$$\emptyset = \nklt(X,0) \subsetneq \pnklt(X,0) \subsetneq \nnef(-K_X) \cup \nklt(X,0).$$
\end{enumerate}
\end{example}

\begin{lemma}\label{lem-fund ni}
Let $(X, \Delta)$ be a pair such that $-(K_X+\Delta)$ is pseudoeffective.
Suppose that $D$ is an effective $\R$-Cartier divisor $D$ on $X$ such that
for any resolution $f: Y \to X$, we have $f^*D \geq N$ where $f^*(-(K_X+\Delta))=P+N$ is the divisorial Zariski decomposition.
Then $\pnklt(X, \Delta)\subseteq\espnklt(X, \Delta)\subseteq\nklt(X,\Delta+D)$
for any sufficiently small $\epsilon>0$.
\end{lemma}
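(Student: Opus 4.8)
The plan is to establish the two inclusions separately. The inclusion $\pnklt(X,\Delta)\subseteq\espnklt(X,\Delta)$ holds for \emph{every} $\epsilon>0$ and is immediate from the definition $\pnklt(X,\Delta)=\bigcap_{\epsilon>0}\espnklt(X,\Delta)$, so the content lies entirely in proving $\espnklt(X,\Delta)\subseteq\nklt(X,\Delta+D)$ for all sufficiently small $\epsilon>0$. The starting point is a discrepancy comparison. For a divisorial valuation $\sigma$ over $X$, choose a resolution $f\colon Y\to X$ on which $\Cent_Y\sigma=E$ is a prime divisor, and let $f^*(-(K_X+\Delta))=P+N$ be the divisorial Zariski decomposition; comparing $K_Y=f^*(K_X+\Delta)+\sum a(E_i;X,\Delta)E_i$ with $K_Y=f^*(K_X+\Delta+D)+\sum a(E_i;X,\Delta+D)E_i$ and using $f^*(K_X+\Delta+D)=f^*(K_X+\Delta)+f^*D$ yields $a(E;X,\Delta+D)=a(E;X,\Delta)-\mult_E f^*D$. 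Since $\sigmanum(-(K_X+\Delta))=\mult_E N$ and the hypothesis applied to $f$ gives $f^*D\geq N$, hence $\mult_E f^*D\geq\mult_E N$, we obtain
\[
a(\sigma;X,\Delta+D)\ \leq\ a(E;X,\Delta)-\mult_E N\ =\ \oa(\sigma;X,\Delta).
\]
In particular, $\oa(\sigma;X,\Delta)\leq-1+\epsilon$ forces $a(\sigma;X,\Delta+D)\leq-1+\epsilon$. When $\epsilon=0$ this already says that $\sigma$ is a non-klt place of $(X,\Delta+D)$, so $\Cent_X\sigma\subseteq\nklt(X,\Delta+D)$; the difficulty for general $\epsilon$ is to absorb the extra slack of size $\epsilon$.

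To deal with the slack I would fix, once and for all, a log resolution $g\colon W\to X$ of $(X,\Delta+D)$, write $K_W+B_W=g^*(K_X+\Delta+D)$ with $B_W=-\sum a_i'E_i$ of simple normal crossing support, and set $Z:=\bigcup_{a_i'\leq-1}E_i$ and $U:=W\setminus Z$. Each $E_i$ with $a_i'\leq-1$ is a non-klt place of $(X,\Delta+D)$, so $g(Z)\subseteq\nklt(X,\Delta+D)$. Over $U$ the $\R$-divisor $B_W|_U$ has simple normal crossing support and all of its coefficients are at most $1-\delta$, where $\delta:=\min\big(\{1\}\cup\{1+a_i':a_i'>-1\}\big)>0$; the standard discrepancy computation for simple normal crossing (sub-)pairs via toric blow-ups then shows, computing $a(\sigma;W,B_W)$ locally around the generic point of $\Cent_W\sigma$, that every divisorial valuation $\sigma$ with $\Cent_W\sigma\not\subseteq Z$ satisfies $a(\sigma;W,B_W)\geq-1+\delta$, hence $a(\sigma;X,\Delta+D)\geq-1+\delta$ by crepancy of $g$. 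Now choose $\epsilon$ with $0<\epsilon<\delta$. If a divisorial valuation $\sigma$ satisfies $\oa(\sigma;X,\Delta)\leq-1+\epsilon$, then the comparison above gives $a(\sigma;X,\Delta+D)\leq-1+\epsilon<-1+\delta$, which forces $\Cent_W\sigma\subseteq Z$ and therefore $\Cent_X\sigma=g(\Cent_W\sigma)\subseteq g(Z)\subseteq\nklt(X,\Delta+D)$. Taking the union over all such $\sigma$ gives $\espnklt(X,\Delta)\subseteq\nklt(X,\Delta+D)$, which completes the argument.

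The main obstacle is exactly this last step. The potential-discrepancy hypothesis yields only $a(\sigma;X,\Delta+D)\leq-1+\epsilon$, not $\leq-1$, so $\sigma$ need not itself be a non-klt place; one must instead use that $\nklt(X,\Delta+D)$ is already detected on the fixed log resolution $g$, together with the uniform gap $\delta$ between $-1$ and the discrepancies occurring away from $Z$. This is what the phrase ``for any sufficiently small $\epsilon>0$'' encodes, the precise threshold being $\epsilon<\delta$ with $\delta$ depending only on $g$. Finally, since $\espnklt(X,\Delta)$ is nonincreasing in $\epsilon$, once the inclusion is established for $\epsilon<\delta$ it holds for all smaller $\epsilon$ as well, and in particular $\pnklt(X,\Delta)\subseteq\nklt(X,\Delta+D)$.
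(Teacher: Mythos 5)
Your proof is correct and follows the same route as the paper: the key step is the inequality $a(\sigma;X,\Delta+D)\le\oa(\sigma;X,\Delta)$ for every divisorial valuation $\sigma$, obtained exactly as in the paper from the hypothesis $f^*D\ge N$. The only difference is that you make explicit the standard discrepancy-gap argument on a fixed log resolution of $(X,\Delta+D)$ needed to pass from $a(\sigma;X,\Delta+D)\le-1+\epsilon$ to $\Cent_X\sigma\subseteq\nklt(X,\Delta+D)$ for $\epsilon$ below the threshold $\delta$ --- a point the paper's proof leaves implicit when it asserts that the inequality ``suffices.''
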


\begin{proof}
The first inclusion is by definition.
For the second inclusion, it suffices to show that for any divisorial valuation $\sigma$,
we have $a(\sigma ; X, \Delta+D)\leq\oa(\sigma ; X, \Delta)$.
By taking a log resolution $f : Y \to X$ of $(X, \Delta+ D)$ such that $G:=\Cent_Y \sigma$ is a prime divisor on $Y$, we can write
$$
K_Y + \Gamma = f^*(K_X + \Delta) + E
$$
where $\Gamma$ and $E$ are effective divisors having no common components.
Then we have
$$
\oa(\sigma ; X, \Delta)= a(\sigma; X, \Delta) - \mult_G N =\mult_G (-\Gamma + E - N).
$$
On the other hand, we have
$$
K_Y + \Gamma + f^*D = f^*(K_X + \Delta + D) + E,
$$
thus we obtain
$$
a(\sigma ; X, \Delta+D)=\mult_G(-\Gamma+E-f^*D).
$$
By the assumption $f^*D \geq N$,
it follows that
$$a(\sigma ; X, \Delta+D)=\mult_G(-\Gamma+E-f^*D)\leq\mult_G (-\Gamma + E - N)=\oa(\sigma ; X, \Delta).$$
\end{proof}

The following is a fundamental property of potentially non-klt locus.

\begin{proposition}[{cf. \cite[Proposition 3.3]{BP}}]\label{prop-pnkltQ}
Let $(X, \Delta)$ be a pair such that $-(K_X + \Delta)$ is a big $\Q$-Cartier divisor.
Then there is an effective $\Q$-Cartier divisor $D$ such that
$D \sim_{\Q} -(K_X+\Delta)$ and $\nklt(X, \Delta+D)=\pnklt(X, \Delta)$.
\end{proposition}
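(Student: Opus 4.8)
The plan is to prove the two inclusions $\pnklt(X,\Delta)\subseteq\nklt(X,\Delta+D)$ and $\nklt(X,\Delta+D)\subseteq\pnklt(X,\Delta)$ separately. The first will hold for \emph{every} effective $\Q$-Cartier divisor $D\sim_{\Q}-(K_X+\Delta)$; only the second forces a specific choice of $D$. For the first inclusion I would invoke Lemma \ref{lem-fund ni}: for an arbitrary resolution $f\colon Y\to X$ with divisorial Zariski decomposition $-f^{*}(K_X+\Delta)=P+N$, the divisor $f^{*}D$ is effective and numerically equivalent to the big divisor $P+N=f^{*}(-(K_X+\Delta))$, so $\mult_E f^{*}D\ge\mult_E N$ for every prime divisor $E$ on $Y$, because $\mult_E N=\sigmanum(f^{*}(-(K_X+\Delta)))$ (for the valuation defined by $E$) is by definition the infimum of $\mult_E(\,\cdot\,)$ over effective divisors numerically equivalent to the big divisor $f^{*}(-(K_X+\Delta))$. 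Thus $f^{*}D\ge N$ for every such $f$, and Lemma \ref{lem-fund ni} yields $\pnklt(X,\Delta)\subseteq\espnklt(X,\Delta)\subseteq\nklt(X,\Delta+D)$ for sufficiently small $\epsilon>0$, hence $\pnklt(X,\Delta)\subseteq\nklt(X,\Delta+D)$.

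For the reverse inclusion I would take $D$ to be a rescaled general member of a large, suitably principalized linear system. The key analytic input is that, since $-(K_X+\Delta)$ is big, the base loci of $|{-}m(K_X+\Delta)|$ compute the negative part asymptotically: along any fixed divisorial valuation $E$, $\tfrac1m$ times the base-locus order of $|{-}m(K_X+\Delta)|$ along $E$ tends to $\sigmanum(-(K_X+\Delta))$ as $m\to\infty$ (the standard identity for big divisors; cf. \cite{Na}). Using this, one produces a log resolution $g\colon W\to X$ of $(X,\Delta)$, with prime divisors $E_1,\dots,E_r$ and $-g^{*}(K_X+\Delta)=P_W+N_W$, together with a divisible integer $m\gg0$ such that $g$ principalizes the base ideal of $|{-}m(K_X+\Delta)|$ and, setting $\epsilon_0:=\min\{\oa(E_i;X,\Delta)+1 : \oa(E_i;X,\Delta)>-1\}>0$ (read as $1$ if this set is empty), the coefficient along each $E_i$ of the base divisor of $|{-}m(K_X+\Delta)|$ on $W$ is $<m(\mult_{E_i}N_W+\epsilon_0)$. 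Then set $D:=\tfrac1m D_m$ for a general member $D_m\in|{-}m(K_X+\Delta)|$: this is an effective $\Q$-Cartier divisor with $D\sim_{\Q}-(K_X+\Delta)$, the principalization together with Bertini makes $g$ a log resolution of $(X,\Delta+D)$ as well, and for a general $D_m$ one has $\mult_{E_i}g^{*}D<\mult_{E_i}N_W+\epsilon_0$ for every $i$.

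Since $g$ is a log resolution of $(X,\Delta+D)$, the set $\nklt(X,\Delta+D)$ is the union of those $\Cent_X E_i$ with $a(E_i;X,\Delta+D)\le-1$. From $a(E_i;X,\Delta+D)=a(E_i;X,\Delta)-\mult_{E_i}g^{*}D$ and $\oa(E_i;X,\Delta)=a(E_i;X,\Delta)-\mult_{E_i}N_W$ we obtain $a(E_i;X,\Delta+D)=\oa(E_i;X,\Delta)-\bigl(\mult_{E_i}g^{*}D-\mult_{E_i}N_W\bigr)$, where $0\le\mult_{E_i}g^{*}D-\mult_{E_i}N_W<\epsilon_0$. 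If $\oa(E_i;X,\Delta)>-1$, then $\epsilon_0\le\oa(E_i;X,\Delta)+1$ gives $a(E_i;X,\Delta+D)>-1$, so $E_i$ contributes nothing to $\nklt(X,\Delta+D)$; if $\oa(E_i;X,\Delta)\le-1$, then $\Cent_X E_i\subseteq\pnklt(X,\Delta)$ straight from the definition of $\pnklt$. Hence $\nklt(X,\Delta+D)\subseteq\pnklt(X,\Delta)$, which together with the first paragraph proves $\nklt(X,\Delta+D)=\pnklt(X,\Delta)$.

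The step I expect to be the main obstacle is the joint choice of the model $W$ and the integer $m$ in the second paragraph: one wants $W$ to resolve $(X,\Delta)$, to principalize the base ideal of $|{-}m(K_X+\Delta)|$, and to remain a log resolution of $(X,\Delta+D)$ for the general $D$, all while keeping the finitely many positive potential discrepancies $\oa(E_i;X,\Delta)$ on $W$ bounded below by $-1+\epsilon_0$ with $\epsilon_0>0$; the bookkeeping required to arrange this coherently, so that only finitely many divisorial valuations are relevant to $\nklt(X,\Delta+D)$, is the technical heart, and it is here that the asymptotic identity above, hence the bigness of $-(K_X+\Delta)$, is indispensable (in the merely pseudoeffective situation it breaks down, which is why Proposition \ref{main ingredient}(2) only asserts an inclusion after perturbing by an ample divisor). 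Compare \cite[Proposition 3.3]{BP}, where the same kind of bookkeeping is carried out with $\nklt(X,\Delta)\cup\nnef(-(K_X+\Delta))$ in place of $\pnklt(X,\Delta)$; one may alternatively perturb $P_W$ directly using Lemma \ref{lem-divzd}(3) and then descend to $X$, the delicate case of the descent being when $N_W$ has $g$-exceptional components, and when $-g^{*}(K_X+\Delta)$ has nef positive part on some model, Proposition \ref{cor-frakpklt=pklt} already equates $\spnklt(X,\Delta)$ with $\pnklt(X,\Delta)$ and simplifies the argument.
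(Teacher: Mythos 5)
Your overall strategy is close to the paper's --- both construct $D$ from a general member of the moving part of $|{-}m(K_X+\Delta)|$ plus $\tfrac1m$ times the fixed divisor, and both use Lemma \ref{lem-fund ni} for the inclusion $\pnklt(X,\Delta)\subseteq\nklt(X,\Delta+D)$, which you verify correctly --- but the reverse inclusion as you set it up has a genuine gap, and it is exactly the circularity you flag as ``the main obstacle.'' Your constant $\epsilon_0$ is a minimum over the prime divisors of the model $W$, while $W$ itself must principalize the base ideal of $|{-}m(K_X+\Delta)|$ for an $m$ that you can only choose after knowing $\epsilon_0$. The blow-ups needed for that principalization create new exceptional divisors $F$ whose potential discrepancies $\oa(F;X,\Delta)$ may be arbitrarily close to $-1$ from above; these lower $\epsilon_0$, force a larger $m$, hence further blow-ups, and there is no a priori reason this process terminates. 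The asymptotic identity $\tfrac1m\operatorname{ord}_{E}|{-}m(K_X+\Delta)|\to\sigmanum(-(K_X+\Delta))$ controls only the finitely many valuations fixed in advance, not the ones born in the principalization, and since it is unknown whether $\spnklt(X,\Delta)=\pnklt(X,\Delta)$ in general (Remark \ref{rmk-subtle1}), you cannot argue that only finitely many valuations with $\oa$ near $-1$ are relevant.

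The paper breaks this circularity by reversing the quantifiers. It fixes $\epsilon>0$ first, independently of any model, and uses the comparison $\mc J(\|mg^*(-(K_X+\Delta))\|)\otimes\mathcal{O}_Y(-G')\subseteq\mathfrak{b}(|mg^*(-(K_X+\Delta))|)$ of \cite[Theorem 11.2.21]{posII}, together with the uniform multiplicity bound from \cite[Lemma 9.2.19]{posII} and the inequalities $F_m\leq mN\leq E_m$, to obtain $\mult_{\mathbf D}\bigl(\tfrac1m E_m-N\bigr)<\epsilon$ for \emph{every} prime divisor $\mathbf D$ on the final model, including those created by the resolution itself. The price is that this only yields $\nklt(X,\Delta+D_\epsilon)\subseteq\espnklt(X,\Delta)$, not containment in $\pnklt(X,\Delta)$; the equality is then recovered by interleaving $\nklt(X,\Delta+D_{i})$ and $\epsilon_i\text{-}\spnklt(X,\Delta)$ into a descending chain of closed sets (using Lemma \ref{lem-fund ni} for the other direction) and invoking the Noetherian property. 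To keep your single-shot argument you would need a model-independent positive lower bound on $\oa(F;X,\Delta)+1$ over the relevant valuations, which is essentially the open question of whether $\spnklt=\pnklt$; absent that, you should adopt the paper's two-step limiting argument.
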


\begin{proof}
We claim that for a sufficiently small $\epsilon>0$, there exists an effective $\Q$-divisor $D_{\epsilon}\sim_\Q -(K_X+\Delta)$ such that
\begin{equation}\tag{$\#$}\label{sharp}
\nklt(X, \Delta+D_{\epsilon}) \subseteq \espnklt(X, \Delta).
\end{equation}

Assuming the validity of this claim for the moment, we prove our statement first.
By Lemma \ref{lem-fund ni}, $\epsilon'\text{-}\spnklt(X, \Delta) \subseteq \nklt(X, \Delta+D_{\epsilon})$ holds for any sufficiently small $\epsilon>\epsilon'>0$. Thus using the claim ($\#$), we can form decreasing sequences 
$$
\{\nklt(X,\Delta+D_i)\}_{i\in\N} \text{ and } \{ \epsilon_i\text{-}\spnklt(X,\Delta)\}_{i\in\N}
$$ 
such that 
$$
\nklt(X,\Delta+D_{i+1})\subseteq\epsilon_{i+1}\text{-}\spnklt(X,\Delta)\subseteq \nklt(X,\Delta+D_{i})\subseteq\epsilon_{i}\text{-}\spnklt(X,\Delta)
$$
with $\epsilon_i\to0$ as $i\to \infty$.
Note that each $\nklt(X, \Delta+D_i)$ is a closed set.
By the Noetherian property,
$\nklt(X, \Delta+D_i)$ stabilizes as a closed set, and so does $\epsilon_i\text{-}\spnklt(X,\Delta)$.
Thus we get $\nklt(X, \Delta+D_i)=\pnklt(X, \Delta)$ for a sufficiently large $i$.
By letting $D:=D_{i}$, we conclude that the statement of proposition holds.

It only remains to prove the claim ($\#$).
Fix a sufficiently small $\epsilon >0$ and consider a resolution $g : Y \to X$ of singularities.
By \cite[Theorem 11.2.21]{posII}, there exists an effective $\Z$-divisor $G'$ on $Y$ such that
\begin{equation}\tag{!}\label{!}
\mc J(||mg^*(-(K_X+\Delta))||)\otimes \mathcal{O}_Y(-G') \subseteq  \frak b(|mg^*(-(K_X+\Delta))|)
\end{equation}
for a sufficiently large and divisible integer $m >0$.
Take a log resolution $h : Z \to Y$ of $(Y, G')$ so that $G:=h^*G'$ has a simple normal crossing support.
Let $\phi:=g \circ h : Z \to X$.
Now consider a log resolution $f : W \to Z$ of both the base ideal $\frak b(|m\phi^*(-(K_X+\Delta))|)$
and the asymptotic multiplier ideal $\mc J(||m\phi^*(-(K_X+\Delta))||)$ for some sufficiently divisible and large integer $m>0$.
Thus we obtain the following birational morphisms
\[
\begin{split}
\xymatrix{
 W \ar@/^1.7pc/[rrr]^{\phi \circ f}\ar[r]^{f} & Z \ar[r]^{h} \ar@/_1.4pc/[rr]_{\phi} & Y \ar[r]^{g} & X 
}
\end{split}.
\]
By Hironaka, we can assume that $f$ is a composition of blow-ups at smooth centers.
Arguing as in \cite[Proof of Lemma 9.2.19]{posII}, we can check that there is a constant $c>0$ independent of $f$ such that the multiplicities of $f^*G$ at any prime divisors are smaller than $c$.
By taking $m>0$ sufficiently large, we may also assume that
$$
\text{multiplicity of $\frac{1}{m}f^*G$ at any prime divisor}< \frac{c}{m} < \frac{\epsilon}{2}.
$$
We now write
$$
f^{-1}\frak b(|m\phi^*(-(K_X+\Delta))|) \cdot \mathcal{O}_W = \mathcal{O}_W(-E_m)
$$
and
$$
f^{-1}\mc J(||m\phi^*(-(K_X+\Delta))||) \cdot \mathcal{O}_W = \mathcal{O}_W(-F_m)
$$
for some effective divisors $E_m,F_m$ on $W$.
We obtain a base point free divisor $M_m := mf^* \phi^*(-(K_X+\Delta)) - E_m$ on $W$.
Let $f^* \phi^*(-(K_X+\Delta))  = P+N$ be the divisorial Zariski decomposition.
By Lemma \ref{lem-divzd} (2) and \cite[Proposition 2.5]{ELMNP2}, we have
$$
F_m \leq mN \leq E_m.
$$
On the other hand, by (\ref{!}), $E_m \leq F_m + f^*G$. Thus for every prime divisor $\mathbf D$ on $W$, we have
$$
\mult_{\mathbf D} \left( \frac{1}{m} E_m - N \right) \leq \mult_{\mathbf D} \left( \frac{1}{m} E_m - \frac{1}{m} F_m \right) \leq \mult_{\mathbf D} \frac{1}{m}f^*G < \frac{\epsilon}{2}.
$$
For a sufficiently large and divisible integer $m' >m$, we also have a fixed divisor $E_{m'}$ of the linear system $|m'f^*\phi^*(-(K_X+\Delta))|$.
Since $\frac{1}{m}E_m \geq \frac{1}{m'}E_{m'} \geq N$, it follows that
\begin{equation}\tag{*}\label{*}
0\leq \mult_{\mathbf D} \left( \frac{1}{m} E_m - \frac{1}{m'}E_{m'} \right) \leq \mult_{\mathbf D} \left( \frac{1}{m} E_m - N \right)<\frac{\epsilon}{2}
\end{equation}
for every prime divisor $\mathbf D$ on $W$.
Now choose a general member $M_0 \in |M_m|$ and let
$$
D_{\epsilon}:=(\phi \circ f)_*\left(\frac{1}{m}M_0 + \frac{1}{m}E_m\right).
$$
Note that $D_{\epsilon} \sim_\Q -(K_X+\Delta)$. We will see that our claim (\ref{sharp}) holds with our choice of $D_{\epsilon}$.
We may assume that $\phi \circ f : W \to X$ is a log resolution of $(X, \Delta+D_{\epsilon})$.
For some divisor $\widetilde{E}$ on $W$, we may write
$$
-K_W =(\phi \circ f)^*(-(K_X+\Delta)) + \widetilde{E} = 
 (\phi \circ f)^*(-(K_X+\Delta+D_{\epsilon})) + \frac{1}{m}M_0 +  \widetilde{E}+\frac{1}{m}E_m.
$$
Suppose $\mathbf D$ is a prime divisor on $W$ such that
$$
a(\mathbf D;X,\Delta+D_\epsilon)=\mult_{\mathbf D}\left(-\widetilde E-\frac{1}{m}E_m\right)\leq -1.
$$
By Lemma \ref{lem-divzd} (3), $\mult_{\mathbf D} (P) < \frac{\epsilon}{2}$, and $\frac{1}{m'}E_{m'} - N \leq P$ for a sufficiently divisible and large integer $m'>0$. Thus we get
\begin{equation}\tag{**}\label{**}
\mult_{\mathbf D} \left( \frac{1}{m'}E_{m'} - N \right) < \frac{\epsilon}{2}.
\end{equation}
By (\ref{*}) and (\ref{**}), we obtain
$$
\begin{array}{rl}
\oa(\mathbf D;X,\Delta)&=\mult_{\mathbf D}(-\widetilde E-N)\\
&=\mult_{\mathbf D}\{(-\widetilde E-\frac{1}{m}E_m)+(\frac{1}{m}E_m-\frac{1}{m'}E_{m'})+(\frac{1}{m'}E_{m'}-N)\}\\
&<-1+\frac{\epsilon}{2}+\frac{\epsilon}{2}\\
&=-1+\epsilon.
\end{array}
$$
This implies our claim (\ref{sharp}).
\end{proof}

In particular, $\pnklt(X,\Delta)$ is a Zariski closed subset of $X$.

\begin{corollary}\label{cor-r to q}
Let $(X, \Delta)$ be a pair such that $-(K_X + \Delta)$ is big.
Then there exists an effective $\Q$-divisor $\Delta'$
such that $-(K_X+\Delta')$ is big, $\Delta'\geq \Delta$ and $\pnklt(X,\Delta)=\pnklt(X,\Delta')$.
\end{corollary}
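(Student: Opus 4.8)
The plan is to approximate $\Delta$ from above by $\Q$-divisors and then combine the intersection formula of Proposition \ref{prop-intersection pnklt} with the Noetherian property of $X$.

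First I would fix a sequence of effective $\Q$-divisors $\{\Delta_i\}_{i\ge 1}$ with $\Delta\le\Delta_{i+1}\le\Delta_i$ for all $i$, with $\lim_{i\to\infty}\Delta_i=\Delta$, and such that each $-(K_X+\Delta_i)$ is a big $\Q$-Cartier divisor. Writing $\Delta=\sum_j a_j\Gamma_j$ and choosing rationals $a_j^{(i)}\downarrow a_j$ gives $\Delta_i:=\sum_j a_j^{(i)}\Gamma_j\ge\Delta$ converging to $\Delta$; since bigness is an open condition and $-(K_X+\Delta_i)$ converges to $-(K_X+\Delta)$, the divisor $-(K_X+\Delta_i)$ is big for all $i\gg 0$, and one discards the finitely many remaining indices. (The $\Q$-Cartier requirement, which is needed only to apply Proposition \ref{prop-pnkltQ} below, is automatic when $X$ is $\Q$-factorial; in general one keeps the $\Delta_i$ inside the rational affine subspace of boundaries $B$ for which $K_X+B$ is $\R$-Cartier.)

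Next, by Lemma \ref{lem-oa>oa}(3) applied with $\Delta_{i+1}\le\Delta_i$, the subsets $\pnklt(X,\Delta_i)$ form a descending chain
$$
\pnklt(X,\Delta_1)\supseteq\pnklt(X,\Delta_2)\supseteq\cdots,
$$
and by Proposition \ref{prop-pnkltQ} each $\pnklt(X,\Delta_i)$ is Zariski closed in $X$. Since $X$ is Noetherian, this descending chain of closed subsets stabilizes, so there is an index $i_0$ with $\pnklt(X,\Delta_i)=\pnklt(X,\Delta_{i_0})$ for all $i\ge i_0$. On the other hand, Proposition \ref{prop-intersection pnklt} gives $\pnklt(X,\Delta)=\bigcap_{i\ge 1}\pnklt(X,\Delta_i)$. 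Comparing the two identities yields $\pnklt(X,\Delta)=\pnklt(X,\Delta_{i_0})$, and then $\Delta':=\Delta_{i_0}$ is an effective $\Q$-divisor with $\Delta'\ge\Delta$, $-(K_X+\Delta')$ big, and $\pnklt(X,\Delta)=\pnklt(X,\Delta')$.

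The perturbation in the first step is routine bookkeeping. The step that carries the real content is the use of the Noetherian property to collapse the a priori infinite intersection $\bigcap_{i\ge 1}\pnklt(X,\Delta_i)$ onto a single member $\pnklt(X,\Delta_{i_0})$ of the sequence; this is precisely what allows one to trade the real boundary $\Delta$ for a rational one without altering the potentially non-klt locus, and I expect that checking the three ingredients (monotonicity from Lemma \ref{lem-oa>oa}, closedness from Proposition \ref{prop-pnkltQ}, and the intersection formula from Proposition \ref{prop-intersection pnklt}) all apply to the chosen sequence will be the only point needing care.
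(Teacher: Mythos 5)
Your proposal is correct and follows essentially the same route as the paper: approximate $\Delta$ from above by a decreasing sequence of $\Q$-divisors $\Delta_i$ with $-(K_X+\Delta_i)$ big, use Lemma \ref{lem-oa>oa} for monotonicity, Proposition \ref{prop-pnkltQ} for closedness, Proposition \ref{prop-intersection pnklt} for the intersection formula, and the Noetherian property to collapse the intersection onto a single term. The extra detail you supply on constructing the $\Delta_i$ (openness of the big cone, the $\Q$-Cartier bookkeeping) is exactly what the paper leaves implicit.
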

\begin{proof}
We can easily take a sequence $\{\Delta_i\}$ of $\Q$-divisors such that $-(K_X+\Delta_i)$ are big $\Q$-Cartier divisors, $\Delta_{i+1}\leq\Delta_i$ and $\lim_{i\to\infty}\Delta_i=\Delta$.
By Proposition \ref{prop-intersection pnklt}, we have
$$
\pnklt(X,\Delta)=\bigcap_i\pnklt(X,\Delta_i).
$$
As we have seen above, the sets $\pnklt(X,\Delta_i)$ are closed for all $i$.
Note also that $\pnklt(X,\Delta_{i+1})\subseteq\pnklt(X,\Delta_i)$ for all $i$ by Lemma \ref{lem-oa>oa}.
Thus by the Noetherian property, $\pnklt(X,\Delta)=\pnklt(X,\Delta_i)$ for all $i\gg0$.
Now take $\Delta'=\Delta_i$ for any $i\gg 0$.
\end{proof}

\begin{proposition}\label{prop-pnkltR}
Let $(X, \Delta)$ be a pair such that $-(K_X + \Delta)$ is a big $\R$-Cartier divisor.
Then there is an effective $\R$-Cartier divisor $D$ such that
$D \sim_{\R} -(K_X+\Delta)$ and $\nklt(X, \Delta+D)=\pnklt(X, \Delta)$.
\end{proposition}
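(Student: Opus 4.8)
The plan is to reduce to the $\Q$-Cartier case, which is exactly Proposition \ref{prop-pnkltQ}. The bridge between the $\R$-divisor $\Delta$ and a suitable $\Q$-divisor is supplied by Corollary \ref{cor-r to q}, which says that the potentially non-klt locus is insensitive to replacing $\Delta$ by a slightly larger $\Q$-divisor.

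Concretely, I would first apply Corollary \ref{cor-r to q} to obtain an effective $\Q$-divisor $\Delta'$ with $\Delta' \geq \Delta$, with $-(K_X+\Delta')$ big and $\Q$-Cartier, and with $\pnklt(X,\Delta') = \pnklt(X,\Delta)$. Then Proposition \ref{prop-pnkltQ}, applied to the pair $(X,\Delta')$, produces an effective $\Q$-Cartier divisor $D'$ such that $D' \sim_{\Q} -(K_X+\Delta')$ and $\nklt(X,\Delta'+D') = \pnklt(X,\Delta')$.

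It then remains to pass back to $\Delta$. I would set $D := D' + (\Delta' - \Delta)$. This is an effective divisor, being a sum of two effective divisors. From $D' \sim_{\Q} -(K_X+\Delta')$ and the fact that $\Delta'-\Delta$ is an effective $\R$-divisor, we get $D \sim_{\R} -(K_X+\Delta') + (\Delta'-\Delta) = -(K_X+\Delta)$; in particular $K_X+\Delta+D \sim_{\R} 0$ is $\R$-Cartier, hence so is $D = (K_X+\Delta+D) - (K_X+\Delta)$, and $(X,\Delta+D)$ is a pair. Finally, as divisors one has $\Delta + D = \Delta' + D'$, so $\nklt(X,\Delta+D) = \nklt(X,\Delta'+D') = \pnklt(X,\Delta') = \pnklt(X,\Delta)$, which is the desired conclusion.

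I do not expect any real obstacle: all of the substance has already been absorbed into Proposition \ref{prop-pnkltQ} (the construction of $D_\epsilon$ from asymptotic multiplier ideals and base loci, together with the Noetherian stabilization) and into Corollary \ref{cor-r to q} (the $\R$-to-$\Q$ approximation via Proposition \ref{prop-intersection pnklt}). The only things to watch are the elementary bookkeeping distinguishing $\sim_{\Q}$ from $\sim_{\R}$ in the last step and the (trivial but crucial) observation that $\Delta+D$ coincides with $\Delta'+D'$ literally as divisors, so that no separate comparison of non-klt loci is needed.
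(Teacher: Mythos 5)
Your proposal is correct and is essentially identical to the paper's own proof: reduce to the $\Q$-Cartier case via Corollary \ref{cor-r to q}, apply Proposition \ref{prop-pnkltQ} to $(X,\Delta')$, and set $D = D' + (\Delta'-\Delta)$. You have simply spelled out the final verification that the paper leaves as ``we can easily check.''
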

\begin{proof}
We have proven the $\Q$-divisor case.
Suppose that $\Delta$ is an $\R$-divisor.
Then by Corollary \ref{cor-r to q},
there exists a $\Q$-divisor $\Delta'$ such that $\Delta'\geq\Delta$ and $\pnklt(X,\Delta)=\pnklt(X,\Delta')$.
Let $\Delta''=\Delta'-\Delta$.
Then by Proposition \ref{prop-pnkltQ}, there exists an effective $\Q$-divisor $D'$ such that
$D'\sim_\Q-(K_X+\Delta')$ and $\nklt(X,\Delta'+D')=\pnklt(X,\Delta')$.
We can easily check that $D=D'+\Delta''$ satisfies the required properties.
\end{proof}

Recall that the non-nef locus is not Zariski closed in general (see \cite{John}).
It is unclear whether $\nklt(X, \Delta) \cup \nnef(-(K_X+\Delta))$ is Zariski closed.

\begin{corollary}\label{cor-pnklt big closed}
Let $(X, \Delta)$ be a pair such that $-(K_X + \Delta)$ is a big $\R$-Cartier divisor.
Then $\pnklt(X, \Delta)$ is a Zariski closed subset of $X$.
\end{corollary}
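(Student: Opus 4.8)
The plan is to deduce the statement directly from Proposition \ref{prop-pnkltR}. Applying that proposition, we obtain an effective $\R$-Cartier divisor $D$ with $D \sim_{\R} -(K_X+\Delta)$ and
$$
\nklt(X, \Delta+D) = \pnklt(X, \Delta).
$$
Since $D \sim_{\R} -(K_X+\Delta)$, we have $K_X+\Delta+D \sim_{\R} 0$, so in particular $K_X+\Delta+D$ is $\R$-Cartier; as $\Delta+D$ is effective, $(X,\Delta+D)$ is a pair in the sense of Section \ref{sec-prelim}. It therefore suffices to recall that the non-klt locus of any pair is Zariski closed: choosing a log resolution $f : Y \to X$ of $(X,\Delta+D)$ and writing $K_Y = f^*(K_X+\Delta+D) + \sum a_i E_i$, one has $\nklt(X,\Delta+D) = f\big(\bigcup_{a_i \le -1} E_i\big)$, which is the image of a closed subset of $Y$ under the proper morphism $f$, hence closed in $X$. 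Combining with the displayed equality gives that $\pnklt(X,\Delta)$ is Zariski closed.

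The argument is entirely formal once Proposition \ref{prop-pnkltR} is available, so there is no genuine obstacle here; the substance of the statement is carried by that proposition (and ultimately by the construction of the divisor $D$ via asymptotic multiplier ideals and stable base ideals in the proof of Proposition \ref{prop-pnkltQ}). The only minor point worth spelling out explicitly is the one above, namely that $\Delta+D$ is an effective $\R$-divisor with $K_X+\Delta+D$ being $\R$-Cartier, so that $\nklt(X,\Delta+D)$ is defined and is a bona fide closed subvariety. It is also worth emphasizing, as noted just before the corollary, that this closedness is somewhat delicate: neither $\nnef(-(K_X+\Delta))$ nor $\nklt(X,\Delta)\cup\nnef(-(K_X+\Delta))$ is known to be Zariski closed in general, so the passage through $\nklt(X,\Delta+D)$ for a suitably chosen $D$ is exactly what makes the conclusion possible.
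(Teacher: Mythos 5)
Your proof is correct and follows exactly the paper's route: the paper also deduces the corollary immediately from Proposition \ref{prop-pnkltR}, and your only addition is to spell out the standard fact that the non-klt locus of a pair is Zariski closed, which is a fine (if routine) elaboration.
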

\begin{proof}
It is immediate by Proposition \ref{prop-pnkltR}.
\end{proof}

We point out that Propositions \ref{prop-pnkltQ} and \ref{prop-pnkltR} do not hold in general
if $-(K_X+\Delta)$ is only pseudoeffective.

\begin{example}\label{nikex}
Let $S$ be a smooth projective rational surface as in $(***)$ of \cite[p.84]{Nik}).
Then $-K_S$ is nef and $\kappa(-K_S)=0$. Note that $|-mK_S|$ consists of a single element $mD$ for every $m>0$. Furthermore, there is an irreducible curve $C$ on $S$ such that $\mult_C D >1$.
Thus $S$ is not of Calabi-Yau type.
Furthermore, we note that $\nklt(S, D) \neq \emptyset$ while $\pnklt(S, 0) = \emptyset$.
\end{example}


Even if $-(K_X+\Delta)$ is only pseudoeffective, we still have the following

\begin{proposition}\label{pnklt-locus-pseff}
Let $(X, \Delta)$ be a pair with an effective $\R$-divisor $\Delta$ on $X$ such that $-(K_X + \Delta)$ is a pseudoeffective $\R$-Cartier divisor.
For any ample $\R$-divisor $A$ such that $-(K_X+\Delta)+A$ is big $\Q$-Cartier, there is an effective $\Q$-Cartier divisor $D$ such that $D \sim_{\Q} -(K_X+\Delta)+A$ and $\nklt(X,\Delta+D)\subseteq\pnklt(X,\Delta)$.
\end{proposition}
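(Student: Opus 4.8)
The plan is to perturb the pseudoeffective class $-(K_X+\Delta)$ into a big one with $A$, and re-run the proof of Proposition~\ref{prop-pnkltQ}. Put $B:=-(K_X+\Delta)+A$, a big $\Q$-Cartier divisor by hypothesis. One cannot apply Proposition~\ref{prop-pnkltQ} directly, since $B$ is not of the form $-(K_X+\Delta')$ for an effective boundary; instead I would carry out the construction of that proof with $B$ in the role of the positive big divisor wherever positivity is used (the asymptotic multiplier ideals $\mc J(\|mg^*B\|)$, the base ideals of $|mg^*B|$, and the perturbation of Lemma~\ref{lem-divzd}), while keeping all discrepancy bookkeeping relative to the pair $(X,\Delta)$. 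The one extra fact needed to reconcile the two sides is
$$
\sigmanum(B)=\sigmanum\big(-(K_X+\Delta)+A\big)\le\sigmanum\big(-(K_X+\Delta)\big)
$$
for every divisorial valuation $\sigma$, coming from subadditivity of $\sigmanum$ and $\sigmanum(A)=0$ ($A$ ample, hence nef); equivalently, on any model $g\colon W\to X$, writing $g^*(-(K_X+\Delta))=P+N$ and $g^*B=P_B+N_B$ for the divisorial Zariski decompositions, one has $N_B\le N$.

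Granting this, the argument of Proposition~\ref{prop-pnkltQ} yields, for every sufficiently small $\epsilon>0$, an effective $\Q$-Cartier divisor $D_\epsilon\sim_\Q B$ with $\nklt(X,\Delta+D_\epsilon)\subseteq\espnklt(X,\Delta)$: if $E_m$ denotes the base-locus divisor of $|mg^*B|$ on a suitable resolution $g\colon W\to X$, the usual comparison with the asymptotic multiplier ideal gives $\tfrac1m E_m\le N_B+\tfrac\epsilon2$ on $W$, so for any prime divisor $\mathbf D$ with $a(\mathbf D;X,\Delta+D_\epsilon)=a(\mathbf D;X,\Delta)-\tfrac1m\mult_{\mathbf D}E_m\le-1$ one gets
$$
\oa(\mathbf D;X,\Delta)=a(\mathbf D;X,\Delta)-\mult_{\mathbf D}N\le-1+\mult_{\mathbf D}\big(\tfrac1m E_m-N\big)\le-1+\tfrac\epsilon2,
$$
using $\tfrac1m E_m-N\le\tfrac1m E_m-N_B\le\tfrac\epsilon2$; here $N_B\le N$ is what supplies the needed slack (and it actually makes the estimate cleaner than in Proposition~\ref{prop-pnkltQ}, where only equality was available). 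As $D_\epsilon$ is $\Q$-Cartier, $(X,\Delta+D_\epsilon)$ is a pair and $\nklt(X,\Delta+D_\epsilon)$ is Zariski closed. I would moreover arrange, as in Proposition~\ref{prop-pnkltQ}, that $D_\epsilon$ contains the negative part, so that $g^*D_\epsilon\ge N$ for every resolution $g$; this is feasible because $g^*B-N\equiv P+g^*A$ is big and movable ($P$ movable by Lemma~\ref{lem-divzd}, $g^*A$ nef and big), so it has effective representatives with arbitrarily small fixed part on every model, which together with $N$ assemble into such a $D_\epsilon$. If $X$ is not $\Q$-factorial this is all carried out on high resolutions, exactly as in Proposition~\ref{prop-pnkltQ}.

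To obtain a single $D$: since $g^*D_\epsilon\ge N$, Lemma~\ref{lem-fund ni} applies to $D_\epsilon$, giving $\epsilon'\text{-}\spnklt(X,\Delta)\subseteq\nklt(X,\Delta+D_\epsilon)$ for all sufficiently small $\epsilon'>0$. Choosing $\epsilon_1>\epsilon_2>\cdots\to0$ inductively, with $\epsilon_{i+1}$ small enough (depending on $D_{\epsilon_i}$) that $\epsilon_{i+1}\text{-}\spnklt(X,\Delta)\subseteq\nklt(X,\Delta+D_{\epsilon_i})$, we obtain a decreasing chain
$$
\nklt(X,\Delta+D_{\epsilon_{i+1}})\subseteq\epsilon_{i+1}\text{-}\spnklt(X,\Delta)\subseteq\nklt(X,\Delta+D_{\epsilon_i})
$$
of Zariski closed subsets of $X$, which stabilizes by the Noetherian property, say to $Z=\nklt(X,\Delta+D_{\epsilon_N})$. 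Since $Z\subseteq\epsilon_i\text{-}\spnklt(X,\Delta)$ for all $i\ge N$ and $\epsilon_i\to0$, it follows that $Z\subseteq\bigcap_{\epsilon>0}\espnklt(X,\Delta)=\pnklt(X,\Delta)$, so $D:=D_{\epsilon_N}\sim_\Q B=-(K_X+\Delta)+A$ is as required. I expect the main obstacle to be the penultimate point of the preceding paragraph — forcing $D_\epsilon$ to contain $N$ so that Lemma~\ref{lem-fund ni} applies. In Proposition~\ref{prop-pnkltQ} this is automatic, since $D_\epsilon\equiv-(K_X+\Delta)$ already forces $g^*D_\epsilon\ge N$; here $D_\epsilon\equiv B$ has strictly smaller negative part, so the inclusion must be engineered into the construction while at the same time preserving $\nklt(X,\Delta+D_\epsilon)\subseteq\espnklt(X,\Delta)$.
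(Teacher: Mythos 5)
Your reduction to the big divisor $B=-(K_X+\Delta)+A$, the key inequality $N_B\le N$ (the paper derives it from Lemma \ref{lem-divzd} (2) applied to the movable divisor $P+g^*A\le g^*B$, you from subadditivity of $\sigmanum$ --- both are fine), and the resulting claim $\nklt(X,\Delta+D_\epsilon)\subseteq\espnklt(X,\Delta)$ all agree with the paper's (very terse) proof. The gap is exactly where you suspected it: the step where you ``engineer'' $g^*D_\epsilon\ge N$ in order to invoke Lemma \ref{lem-fund ni} does not work. That lemma requires $f^*D_\epsilon\ge N\bigl(f^*(-(K_X+\Delta))\bigr)$ on \emph{every} resolution $f$, i.e.\ $\sigma(D_\epsilon)\ge\sigmanum(-(K_X+\Delta))$ for \emph{every} divisorial valuation $\sigma$. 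Your construction on one fixed model controls this only for valuations with divisorial center on that model; for a valuation $\sigma$ exceptional over it, all one knows about an effective divisor $D_\epsilon\sim_\Q B$ is $\sigma(D_\epsilon)\ge\sigmanum(B)$, and $\sigmanum(B)$ can be strictly smaller than $\sigmanum(-(K_X+\Delta))$ --- when $-(K_X+\Delta)$ is pseudoeffective but not big, this drop at non-divisorial centers is precisely what the ample perturbation is there to absorb. A single effective divisor cannot have ``arbitrarily small fixed part on every model''; that is a property of a class, not of one of its members, so the two requirements you are trying to impose on $D_\epsilon$ cannot both be arranged at higher valuations.

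The repair --- and, as far as I can tell, what the paper means by ``applying the argument of Proposition \ref{prop-pnkltQ} to $-(K_X+\Delta)+A$'' --- is to run the entire Noetherian chain relative to $B$'s own negative part rather than to $N$. Set $\oa_B(\sigma;X,\Delta):=a(\sigma;X,\Delta)-\sigmanum(B)$ and let $\epsilon\text{-}\spnklt_B(X,\Delta)$ be the corresponding locus. Your estimate already proves the stronger inclusion $\nklt(X,\Delta+D_\epsilon)\subseteq\epsilon\text{-}\spnklt_B(X,\Delta)$: bound $a(\mathbf D;X,\Delta)-\mult_{\mathbf D}N_B$ by $-1+\mult_{\mathbf D}\left(\tfrac{1}{m}E_m-N_B\right)<-1+\epsilon$ \emph{before} ever comparing with $N$. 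In the other direction, the proof of Lemma \ref{lem-fund ni} with $N_B$ in place of $N$ gives $\epsilon'\text{-}\spnklt_B(X,\Delta)\subseteq\nklt(X,\Delta+D_\epsilon)$ with no extra hypothesis, since $f^*D_\epsilon\ge N_B$ is automatic for an effective divisor in the class of $f^*B$. The interleaved chain now closes up and stabilizes to a closed set contained in $\bigcap_{\epsilon>0}\epsilon\text{-}\spnklt_B(X,\Delta)$, and only at the very last step does one use $\sigmanum(B)\le\sigmanum(-(K_X+\Delta))$ (the paper's $N'\le N$) to conclude that this set lies in $\pnklt(X,\Delta)$. In short, the inequality $N_B\le N$ should enter once, at the end of the argument, not inside the construction of $D_\epsilon$.
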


\begin{proof}
We can take an ample $\R$-divisor $A$ such that $-(K_X+\Delta)+A$ is big $\Q$-Cartier. Then there is an effective $\Q$-divisor $E \sim_{\Q} -(K_X+\Delta)+A$.
Consider a resolution $g : Y \to X$ of singularities. Let
$g^*(-(K_X+\Delta))=P+N$ and $g^*(-(K_X+\Delta) + A) = P'+N'$ be divisorial Zariski decomposition. Since $P+f^*A$ is movable and big, it follows from Lemma \ref{lem-divzd} (2) that $N \geq N'$.
Now by applying the argument of the proof of Proposition \ref{prop-pnkltQ} to $-(K_X+\Delta) + A$ instead of $-(K_X+\Delta)$, we can find an effective $\Q$-Cartier divisor $D$ such that $D \sim_{\Q} -(K_X+\Delta)+A$ and $\nklt(X, \Delta+D) \subseteq \pnklt(X, \Delta)$.
\end{proof}

It is unknown whether $\pnklt(X,\Delta)$ is also Zariski closed as in Corollary \ref{cor-pnklt big closed}
if $-(K_X+\Delta)$ is only pseudoeffective.

\section{Characterization of Fano type varieties}\label{sec-characteirze FT}
In this section, we characterize the varieties of Fano type using the properties of potentially
klt, lc pairs in Theorems \ref{thrm-charft} and \ref{main thrm1}.
We also give a characterization of klt Calabi-Yau type varieties by using a variant of potential pairs defined with the $s$-decomposition (Theorem \ref{thrm-charcy}).

\begin{theorem}\label{thrm-charft}
For a $\Q$-factorial normal projective variety $X$, the following are equivalent:
\begin{enumerate}
 \item[$(1)$] $X$ is of Fano type.
 \item[$(2)$] $\mathfrak A (X) >-1$ and $-K_X$ is big.
 \item[$(3)$] There is an effective $\R$-divisor $\Delta$ such that $(X, \Delta)$ is potentially klt and $-K_X$ is big.
 \item[$(4)$] There is an effective $\Q$-divisor $\Gamma$ such that $(X, \Gamma)$ is potentially klt and $-(K_X+\Gamma)$ is big.
\end{enumerate}
\end{theorem}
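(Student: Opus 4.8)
The plan is to prove the cyclic implications $(1)\Rightarrow(2)\Rightarrow(3)\Rightarrow(4)\Rightarrow(1)$. Of these, $(2)\Rightarrow(3)$ is immediate from the definition of $\mathfrak A(X)$ as a supremum: if $\mathfrak A(X)=\sup_\Delta\mathfrak A(X,\Delta)>-1$, then some effective $\R$-divisor $\Delta$ with $-(K_X+\Delta)$ pseudoeffective satisfies $\mathfrak A(X,\Delta)>-1$, i.e. $(X,\Delta)$ is potentially klt, and $-K_X$ is big by hypothesis. So the content lies in the three remaining arrows, and the real obstacle is $(4)\Rightarrow(1)$.

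For $(1)\Rightarrow(2)$ I would pick an effective divisor $\Delta$ making $(X,\Delta)$ a klt Fano pair. Then $-K_X=-(K_X+\Delta)+\Delta$ is big. Moreover $-(K_X+\Delta)$ is ample, so $\sigmanum(-(K_X+\Delta))=0$ for every divisorial valuation $\sigma$ of $X$; hence $\oa(\sigma;X,\Delta)=a(\sigma;X,\Delta)$ and $\mathfrak A(X,\Delta)=\inf_\sigma a(\sigma;X,\Delta)>-1$, the last inequality because $(X,\Delta)$ is klt. Thus $\mathfrak A(X)\ge\mathfrak A(X,\Delta)>-1$.

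The step $(4)\Rightarrow(1)$ is where the paper's main ingredient, Proposition \ref{prop-pnkltQ}, enters. Given $(X,\Gamma)$ potentially klt with $-(K_X+\Gamma)$ big $\Q$-Cartier, Proposition \ref{prop-pnkltQ} produces an effective $\Q$-divisor $D\sim_\Q-(K_X+\Gamma)$ with $\nklt(X,\Gamma+D)=\pnklt(X,\Gamma)$, and the latter is empty by Lemma \ref{lem-P<1=klt}(2) since $(X,\Gamma)$ is potentially klt. Hence $(X,\Gamma+D)$ is klt and $K_X+\Gamma+D\sim_\Q0$, so $(X,\Gamma+D)$ is a klt Calabi--Yau pair; moreover $-K_X=-(K_X+\Gamma)+\Gamma$ is big. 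By the criterion recalled in Section \ref{sec-prelim} — a normal projective variety is of Fano type if and only if it is of klt Calabi--Yau type and its anticanonical divisor is big — this gives that $X$ is of Fano type. So the difficulty of this arrow is entirely packaged into Proposition \ref{prop-pnkltQ}, which is already available; granting it, the deduction is a few lines.

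Finally, $(3)\Rightarrow(4)$ requires a small argument, because there $-(K_X+\Delta)$ is only pseudoeffective and $\Delta$ may be an $\R$-divisor, while Proposition \ref{prop-pnkltQ} and Corollary \ref{cor-r to q} want a big $\Q$-boundary. The device I would use is the convexity identity
$$
-(K_X+(1-\epsilon)\Delta)=\epsilon\,(-K_X)+(1-\epsilon)\,(-(K_X+\Delta)),
$$
whose right-hand side is big for every $\epsilon\in(0,1)$ since $-K_X$ is big and $-(K_X+\Delta)$ is pseudoeffective. Setting $\Delta_\epsilon:=(1-\epsilon)\Delta$, Lemma \ref{lem-oa>oa}(2) gives that $(X,\Delta_\epsilon)$ is still potentially klt, and then Corollary \ref{cor-r to q}, applied to the now big divisor $-(K_X+\Delta_\epsilon)$, produces an effective $\Q$-divisor $\Gamma\ge\Delta_\epsilon$ with $-(K_X+\Gamma)$ big and $\pnklt(X,\Gamma)=\pnklt(X,\Delta_\epsilon)=\emptyset$; thus $(X,\Gamma)$ is potentially klt, which is $(4)$. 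This passage from a pseudoeffective $\R$-boundary to a big $\Q$-boundary is the only real subtlety outside Proposition \ref{prop-pnkltQ}, and it is precisely what forces the hypothesis that $-K_X$ be big in $(2)$ and $(3)$. The $\Q$-factoriality assumption is used only to ensure that the auxiliary divisors above are $\R$-Cartier, so that the results of Sections \ref{sec-pklt} and \ref{sec-pnklr locus} apply verbatim.
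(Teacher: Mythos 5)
Your proposal is correct and follows essentially the same route as the paper: $(1)\Rightarrow(2)$ via $\sigmanum(-(K_X+\Delta))=0$ for an ample $-(K_X+\Delta)$, $(2)\Rightarrow(3)$ by definition of the supremum, and $(4)\Rightarrow(1)$ by combining Proposition \ref{prop-pnkltQ} with Lemma \ref{lem-P<1=klt} to produce a klt Calabi--Yau pair with $-K_X$ big. The only divergence is $(3)\Rightarrow(4)$, where the paper simply takes $\Gamma=0$ (a $\Q$-divisor with $-(K_X+\Gamma)=-K_X$ big by hypothesis, and $(X,0)$ potentially klt by Lemma \ref{lem-oa>oa}), so your scaling to $\Delta_\epsilon=(1-\epsilon)\Delta$ followed by Corollary \ref{cor-r to q} is a valid but unnecessary detour.
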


\begin{proof}
(1)$\Rightarrow$(2):
If $X$ is a variety of Fano type, then there exists an effective divisor $\Delta$ on $X$ such that
$(X,\Delta)$ is klt and $-(K_X+\Delta)$ is ample. It is clear that $-K_X$ is big.
For any divisorial valuation $\sigma$ of $X$, we have $\sigmanum(-(K_X+\Delta))=0$ so that $\oa(\sigma;X, \Delta)=a(\sigma;X, \Delta)$. Thus we obtain
$$
\mathfrak A(X,\Delta)=\inf_\sigma \oa(\sigma;X,\Delta)=\inf_\sigma a(\sigma; X, \Delta) >-1.
$$
Therefore, $\mathfrak A (X)>-1$.

(2)$\Rightarrow$(3):
By definition, there is an effective $\R$-divisor $\Delta$ on $X$ such that $\mathfrak A (X, \Delta) >-1$. Thus, $(X, \Delta)$ is potentially klt.

(3)$\Rightarrow$(4): Take $\Gamma=0$ and apply Lemma \ref{lem-oa>oa}.

(4)$\Rightarrow$(1):
By Proposition \ref{prop-pnkltQ}, there exists an effective $\Q$-Cartier divisor $D$ such that $D\sim_\Q -(K_X+\Gamma)$ and $\nklt(X, \Gamma+D)=\pnklt(X, \Gamma)=\emptyset$.
In particular, $(X, \Gamma+D)$ is a klt Calabi-Yau pair with big $-K_X$.
It is easy to check that such variety $X$ is of Fano type.
\end{proof}

In fact, we can drop the $\Q$-factoriality condition since we can always take a small
$\Q$-factorialization for any klt pair (see \cite[Corollary 1.4.3]{BCHM}).
The details are left to the readers.

\begin{corollary}\label{cor-FT=pklt(X,N)}
Let $X$ be a $\Q$-factorial normal projective variety such that $-K_X$ is big
and $-K_X=P+N$ the divisorial Zariski decomposition. Then $X$ is of Fano type if and only if $(X, N)$ is potentially klt.
\end{corollary}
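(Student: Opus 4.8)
The plan is to deduce the corollary from Theorem \ref{thrm-charft} together with the identity $\oa(\sigma;X,N)=\oa(\sigma;X,0)$ valid for every divisorial valuation $\sigma$ of $X$; this identity yields $\mathfrak{A}(X,N)=\mathfrak{A}(X,0)$, and likewise $\pnklt(X,N)=\pnklt(X,0)$.

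First I would dispose of the easy implication. Since $-(K_X+N)=P$ is movable by Lemma \ref{lem-divzd} (1), it is pseudoeffective, so $\oa(\,\cdot\,;X,N)$ makes sense; also $-K_X$ is big by hypothesis, and $N$ is an effective $\R$-Cartier divisor because $X$ is $\Q$-factorial. Hence if $(X,N)$ is potentially klt, then condition $(3)$ of Theorem \ref{thrm-charft} holds with $\Delta=N$, and therefore $X$ is of Fano type.

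For the identity, fix $\sigma$ and a birational morphism $f\colon Y\to X$ with $\Cent_Y\sigma=E$ a prime divisor, and put $\operatorname{ord}_\sigma N:=\mult_E(f^{*}N)$. Comparing $K_Y=f^{*}(K_X+N)+\sum a(E_i;X,N)E_i$ with $K_Y=f^{*}K_X+\sum a(E_i;X,0)E_i$ and using $f^{*}(K_X+N)=f^{*}K_X+f^{*}N$ gives $a(\sigma;X,N)=a(\sigma;X,0)-\operatorname{ord}_\sigma N$. Since $-(K_X+N)=P$, it remains to prove the numerical identity $\sigmanum(-K_X)=\sigmanum(P)+\operatorname{ord}_\sigma N$, where both sides are evaluated at $\sigma$. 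Because $-K_X$ is big, its positive part $P$ is big as well, so each of these asymptotic valuations equals the infimum of $\operatorname{ord}_\sigma$ over the effective representatives of the corresponding numerical class. Every effective $D'\equiv -K_X$ satisfies $D'\geq N$, since $N=N(-K_X)$ is contained in every effective divisor numerically equivalent to $-K_X$; writing $D'=N+(D'-N)$ with $D'-N\geq 0$ and $D'-N\equiv P$ gives $\operatorname{ord}_\sigma D'=\operatorname{ord}_\sigma N+\operatorname{ord}_\sigma(D'-N)\geq \operatorname{ord}_\sigma N+\sigmanum(P)$, and passing to the infimum over $D'$ yields $\sigmanum(-K_X)\geq\operatorname{ord}_\sigma N+\sigmanum(P)$; conversely, the representatives $N+P'$ with $P'\equiv P$ effective give the reverse inequality. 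Therefore $\oa(\sigma;X,N)=a(\sigma;X,0)-\operatorname{ord}_\sigma N-\sigmanum(P)=a(\sigma;X,0)-\sigmanum(-K_X)=\oa(\sigma;X,0)$.

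Granting the identity, suppose $X$ is of Fano type. Theorem \ref{thrm-charft} gives $\mathfrak{A}(X)>-1$; since $-K_X$ is pseudoeffective, Lemma \ref{lem-oa>oa} shows that the supremum defining $\mathfrak{A}(X)$ is attained at $\Delta=0$, so $\mathfrak{A}(X,0)=\mathfrak{A}(X)>-1$, and then $\mathfrak{A}(X,N)=\inf_\sigma\oa(\sigma;X,N)=\inf_\sigma\oa(\sigma;X,0)=\mathfrak{A}(X,0)>-1$, that is, $(X,N)$ is potentially klt. I expect the main point to be the numerical identity $\sigmanum(-K_X)=\sigmanum(P)+\operatorname{ord}_\sigma N$: it expresses the compatibility of the divisorial Zariski decomposition of $-K_X$ with subtracting its negative part, and the two ingredients it rests on — that $N$ lies below every effective representative of its numerical class, and that the positive part of a big divisor is big — are standard properties of the divisorial Zariski decomposition recorded in \cite{Na}.
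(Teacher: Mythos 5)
Your proof is correct, and while the ``if'' direction coincides with the paper's (both invoke condition $(3)$ of Theorem \ref{thrm-charft} with $\Delta=N$), your ``only if'' direction takes a genuinely different route. The paper's argument is shorter: it picks a klt boundary $\Delta$ with $-(K_X+\Delta)$ ample, observes $\Delta\geq N$ from Lemma \ref{lem-divzd}~(2) because $-(K_X+\Delta)$ is movable and $\leq -K_X$, and then applies the monotonicity Lemma \ref{lem-oa>oa} to get $\mathfrak A(X,N)\geq\mathfrak A(X,\Delta)=\inf_\sigma a(\sigma;X,\Delta)>-1$. You instead establish the exact identity $\oa(\sigma;X,N)=\oa(\sigma;X,0)$, whose content is the additivity $\sigmanum(-K_X)=\sigma(N)+\sigmanum(P)$ for every divisorial valuation $\sigma$; your derivation of this from the infimum characterization of $\sigmanum$ on the big cone (every effective $D'\equiv -K_X$ dominates $N$, and $P$ is big) is correct, and the reduction $\mathfrak A(X)=\mathfrak A(X,0)$ via Lemma \ref{lem-oa>oa} is also fine. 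What your approach buys is a strictly stronger statement --- that $(X,N)$ is potentially klt if and only if $(X,0)$ is, i.e., subtracting the negative part changes no potential discrepancy --- which makes the corollary an immediate consequence of Theorem \ref{thrm-charft}~(2); the cost is the extra numerical lemma, which depends essentially on bigness of $-K_X$ (and of $P$), whereas the paper's argument only needs the already-recorded Lemmas \ref{lem-divzd} and \ref{lem-oa>oa}. Both arguments are valid as written.
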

\begin{proof}
The `if' direction follows from Theorem \ref{thrm-charft}.
For the `only if' direction, suppose that $X$ is of Fano type.
There exists an effective $\R$-divisor $\Delta$ on $X$ such that $(X,\Delta)$ is klt and $-(K_X+\Delta)$ is ample. Since $\Delta \geq N$ by Lemma \ref{lem-divzd} (2), it follows from Lemma \ref{lem-oa>oa} that $\mathfrak A(X, N) \geq \mathfrak A(X, \Delta)$.
Since $-(K_X+\Delta)$ is ample, we have
$$\mathfrak A(X,\Delta)=\inf_{\sigma}\oa(\sigma,X,\Delta)=\inf_{\sigma}a(\sigma,X,\Delta)>-1.$$
Thus $(X, N)$ is potentially klt.
\end{proof}

Note that `potentially klt' in Corollary \ref{cor-FT=pklt(X,N)} can be replaced by the usual `klt' if $\dim X=2$.

\begin{remark}\label{rmk-suspect CY}
One may suspect whether
\emph{a normal projective variety $X$ is of Calabi-Yau type if and only if $\mathfrak A (X) \geq -1$ and $-K_X$ is pseudoeffective}. The `only if' part is trivial.
However, the `if' part is easily seen to be false.
Example \ref{nikex} gives a counterexample.
There also exists a variety $X$ such that $\mathfrak A (X) \geq -1$ and $-K_X$ is big, but $X$ is not of Calabi-Yau type (see Example \ref{lcweakfano}).
However, we feel that it is reasonable to impose the bigness condition on $-K_X$ to study the varieties
$X$ with $\mathfrak A (X) \geq -1$.
\end{remark}

\begin{example}\label{lcweakfano}
There exists an lc weak Fano pair $(X, \Delta)$ such that $X$ is not of Calabi-Yau type (see \cite[Example 6.2]{CHP}).
Since $(X,\Delta)$ is lc and $-(K_X + \Delta)$ is nef and big, we have $\oa(\sigma; X, \Delta)=a(\sigma; X, \Delta) \geq -1$ for any divisorial valuation $\sigma$ of $X$.
Thus $\mathfrak A (X) \geq -1$.

On the other hand, there also exists a variety $X'$ of Calabi-Yau type such that there is no boundary divisor $\Gamma$
for which $(X', \Gamma)$ is an lc weak Fano pair (see \cite[Example 6.3]{CHP}).
Note that we still have $\mathfrak A (X') \geq -1$.
\end{example}

We ask the following.

\begin{problem}
Characterize a normal projective variety $X$ such that $-K_X$ is big and $\mathfrak A (X) \geq -1$.
\end{problem}

In the surface case, this can be answered without difficulty.

\begin{proposition}
Let $X$ be a normal projective surface such that $-K_X$ is big. Then $X$ is of Calabi-Yau type if and only if $\mathfrak A(X) \geq -1$.
\end{proposition}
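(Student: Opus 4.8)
The plan is to prove this by a direct argument using the surface-specific tools already developed, together with the characterization of Fano type in Theorem \ref{thrm-charft} and Corollary \ref{cor-FT=pklt(X,N)}. The \textquotedblleft only if\textquotedblright{} direction is trivial as remarked: if $X$ is of Calabi-Yau type, there is an effective $\Q$-divisor $\Delta$ with $(X,\Delta)$ an lc Calabi-Yau pair, so $\sigmanum(-(K_X+\Delta))=0$ and $\oa(\sigma;X,\Delta)=a(\sigma;X,\Delta)\geq -1$ for every $\sigma$, hence $\mathfrak{A}(X)\geq\mathfrak{A}(X,\Delta)\geq -1$. So the content is the \textquotedblleft if\textquotedblright{} direction: assume $-K_X$ is big and $\mathfrak{A}(X)\geq -1$, and produce an effective $\Q$-divisor $\Gamma$ with $(X,\Gamma)$ an lc Calabi-Yau pair.

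The key steps I would carry out in order. First, reduce to the potentially lc statement: by definition of $\mathfrak{A}(X)$ as a supremum, for each $n$ there is an effective $\R$-divisor $\Delta_n$ with $-(K_X+\Delta_n)$ pseudoeffective and $\mathfrak{A}(X,\Delta_n)\geq -1-1/n$; in dimension $2$ one can pass (after a limiting/Noetherian argument, or by taking $\Delta=0$ if $\mathfrak{A}(X,0)\geq -1$ already, using that $-K_X$ big forces $-(K_X+\Delta)$ pseudoeffective only for suitable $\Delta$) to a pair $(X,\Delta)$ with $-(K_X+\Delta)$ big and $(X,\Delta)$ potentially lc — here I would use that on a surface $\spnklt=\pnklt$ (Proposition \ref{cor-frakpklt=pklt}, valid since $-f^*(K_X+\Delta)$ has a genuine Zariski decomposition with nef positive part on surfaces) so that $\mathfrak{A}(X,\Delta)\geq -1$ is equivalent to every $\oa(\sigma;X,\Delta)\geq -1$. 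Second, since $-K_X$ is big, after possibly enlarging $\Delta$ slightly we may take $-(K_X+\Delta)$ to be big; then apply the $\R$-divisor version of the fundamental property — the potentially lc analogue of Proposition \ref{prop-pnkltR}, i.e. there is an effective $\R$-divisor $D\sim_\R -(K_X+\Delta)$ such that $(X,\Delta+D)$ is lc (since $\pnklt$ being \textquotedblleft lc\textquotedblright{} transfers to honest lc singularities of $\Delta+D$, the non-lc locus being empty). This gives an lc pair $(X,\Delta+D)$ with $-(K_X+\Delta+D)\sim_\R 0$, i.e. an lc Calabi-Yau pair in the $\R$-coefficient sense. Third, promote from $\R$-coefficients to $\Q$-coefficients: on a surface, or more generally for lc pairs, one can perturb $\Delta+D$ within its $\R$-linear equivalence class to an effective $\Q$-divisor $\Gamma$ with $K_X+\Gamma\sim_\Q 0$ and $(X,\Gamma)$ still lc, using that the lc condition and $\R$-linear triviality cut out a rational polytope, so a rational point exists. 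This yields that $X$ is of Calabi-Yau type.

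The main obstacle I expect is Step 1–2: making the passage from the \emph{supremum} defining $\mathfrak{A}(X)$ to an \emph{achieved} potentially lc pair $(X,\Delta)$ with $-(K_X+\Delta)$ actually \emph{big} (not merely pseudoeffective), so that Proposition \ref{prop-pnkltR} (in its lc form) applies. One needs a compactness/Noetherian argument on the surface: the family of such $\Delta$ is controlled because the negative part $N$ of $-f^*(K_X+\Delta)$ has boundedly many components with bounded coefficients, and $\oa(\sigma;X,\Delta)\geq -1$ for all $\sigma$ is a closed condition; combined with $-K_X$ big one can slide $\Delta$ to get bigness of $-(K_X+\Delta)$ while preserving potential lc-ness. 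The remaining steps (lc version of the fundamental property, and $\R$-to-$\Q$ perturbation) are routine adaptations of Proposition \ref{prop-pnkltR} and standard facts about lc polytopes, so I would state them briefly and refer back.
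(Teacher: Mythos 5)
There is a genuine gap, and it sits exactly where you declare the step ``routine''. Your Step 2 asserts a potentially-lc analogue of Proposition \ref{prop-pnkltR}: for a potentially lc pair $(X,\Delta)$ with $-(K_X+\Delta)$ big, there is an effective $D\sim_{\R}-(K_X+\Delta)$ with $(X,\Delta+D)$ lc. This cannot be a routine adaptation, because the statement is \emph{false} in higher dimensions: Example \ref{lcweakfano} exhibits an lc weak Fano pair $(X,\Delta)$ --- hence $\oa(\sigma;X,\Delta)=a(\sigma;X,\Delta)\geq -1$ for all $\sigma$ and $-K_X$ big --- whose underlying variety is not of Calabi-Yau type. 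The mechanism of Propositions \ref{prop-pnkltQ} and \ref{prop-pnkltR} only identifies $\nklt(X,\Delta+D_\epsilon)$ with $\pnklt(X,\Delta)$; quantitatively it yields $a(\mathbf D;X,\Delta+D_\epsilon)>\oa(\mathbf D;X,\Delta)-\epsilon\geq -1-\epsilon$, which does not bound the discrepancies below by $-1$, and no limit over $\epsilon$ can be taken since $D_\epsilon$ moves. So the surface case of your Step 2 is essentially equivalent to the proposition itself and requires surface-specific input that you have not supplied. (Your Step 1 ``main obstacle'', by contrast, is vacuous: Lemma \ref{lem-oa>oa} gives $\mathfrak A(X,0)\geq\mathfrak A(X,\Delta)$ for every admissible $\Delta$, so $\mathfrak A(X)=\mathfrak A(X,0)$, the supremum is attained at $\Delta=0$, and $-K_X$ is already big by hypothesis; also $\mathfrak A(X,\Delta)\geq-1$ is equivalent to $\oa(\sigma;X,\Delta)\geq-1$ for all $\sigma$ by the very definition of $\mathfrak A$ as an infimum, with no need for Proposition \ref{cor-frakpklt=pklt}.)

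The paper's argument bypasses your Step 2 entirely. Since $X$ is a surface, the anticanonical morphism $f:X\to Y$ is a $-K_X$-minimal model; Proposition \ref{prop-invariant oa} transports the potential discrepancies to $Y$, where $-K_Y$ is nef and big so that $\oa(\sigma;Y,0)=a(\sigma;Y,0)$. Hence $Y$ is an lc del Pezzo surface, and $X$ is of Calabi-Yau type by \cite[Theorem 3.2.1]{CHP}. The surface-specific content you are missing is precisely that last ingredient --- the existence of an lc anticanonical Calabi-Yau boundary on an lc del Pezzo surface --- which is exactly what breaks down in Example \ref{lcweakfano}. To salvage your outline you would have to prove that ingredient, at which point you have reproduced the paper's proof.
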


\begin{proof}
If $(X, \Delta)$ is an lc Calabi-Yau pair, then $\mathfrak A(X, \Delta) \geq -1$ so that $\mathfrak A(X) \geq -1$. Suppose that $\mathfrak A(X) \geq -1$. Since the anticanonical morphism $f: X \to Y$ is a $-K_X$-minimal model, it follows from Proposition \ref{prop-invariant oa} that $(Y,0)$ is lc, i.e., $Y$ is an lc del Pezzo surface. By \cite[Theorem 3.2.1]{CHP}, $X$ is of Calabi-Yau.
\end{proof}


At this moment, characterizing Calabi-Yau type varieties using the current potential pairs seems to be a hard problem.

\begin{remark}\label{rmk-variant pot sing}
Recall that the notions of potentially klt, lc pairs can be defined with the divisorial Zariski decomposition.
In fact, similar notions can be also defined with other generalizations of Zariski decompositions.
Below we will briefly develop a variant of potential pairs using the $s$-decomposition, instead of divisorial Zariski decomposition.
We will also demonstrate how such variation of the potentially singular pairs can be used to characterize \emph{\textbf{klt}} Calabi-Yau type varieties.
\end{remark}

\begin{definition}[cf. Definition \ref{def-Phi}]\label{def-s-Phi}
Let $(X, \Delta)$ be a pair such that $\Delta$ is a $\Q$-divisor and
$-(K_X + \Delta)$ is effective (up to $\sim_\Q$).
For a divisorial valuation $\sigma$ of $X$, take a resolution $f : Y \to X$ such that the center $\Cent_Y \sigma=E$ is a prime divisor.
Consider the $s$-decomposition $f^*(-(K_X + \Delta)) = P_s + N_s$.
We define the following numbers:
\begin{enumerate}
\item (\emph{potential $s$-discrepancy of $(X,\Delta)$})
$$\oa_s(\sigma; X, \Delta) := a(\sigma; X, \Delta) - \mult_E N_s.$$
\item (\emph{total potential $s$-discrepancy})
$$\mathfrak A_s(X,\Delta):=\inf_{\sigma}\{\oa_s(\sigma;X,\Delta)\}$$
 where the infimum is taken over all the divisorial valuations $\sigma$ of $X$.
\item (\emph{potential $s$-discrepancy of $X$})
$$\mathfrak A_s(X):=\sup_{\Delta}\mathfrak A_s (X,\Delta)$$
where the supremum is taken over all effective $\Q$-divisors $\Delta$ on $X$ such that
$-(K_X+\Delta)$ is effective.
\end{enumerate}
\end{definition}

The following lemma implies that the potential $s$-discrepancy $\oa_s(\sigma; X, \Delta)$
can be computed on any sufficiently high model $Y\to X$ with $Y$ smooth.
Thus the potential $s$-discrepancies are well defined.

\begin{lemma}\label{cons-neg-s-decomp}
Let $f : Y \to X$ be a birational morphism of $\Q$-factorial normal projective varieties.
If $D$ is an effective $\Q$-Cartier $\Q$-divisor with the $s$-decompositions $D=P_s+ N_s$ and $f^*D = P_s'+N_s'$, then $\mult_{f^{-1}_*E}N_s' = \mult_E N_s$ for any irreducible component $E$ of $N_s$.
\end{lemma}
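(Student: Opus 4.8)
The plan is to avoid the $\limsup$ definition of the $s$-decomposition and instead work with the alternative description $N_s(D)=\inf\{L\mid L\sim_{\Q}D,\ L\geq 0\}$ recorded just before Lemma \ref{prop-sd}, combined with the elementary behaviour of divisor multiplicities under the birational morphism $f$. Write $\widetilde E:=f^{-1}_*E$. First I would isolate two bookkeeping facts. For an effective $\Q$-divisor $L$ on $Y$ one has $\mult_E(f_*L)=\mult_{\widetilde E}(L)$, because $f_*$ sends $\widetilde E$ to $E$ and sends every other prime divisor on $Y$ either to $0$ or to a prime divisor distinct from $E$. Dually, for an effective $\Q$-divisor $L'$ on $X$ one has $\mult_{\widetilde E}(f^*L')=\mult_E(L')$, because $f$ is an isomorphism near the generic point of $\widetilde E$, so $\widetilde E$ appears in $f^*E$ with coefficient $1$, while $\widetilde E$ does not appear in $f^*D'$ for any prime divisor $D'\neq E$ on $X$. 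I would also record that $f^*D$ is again effective and $\Q$-Cartier, hence has an $s$-decomposition and obeys the same inf-description on the $\Q$-factorial variety $Y$, and that $f_*f^*D=D$ by the projection formula for the birational morphism $f$.

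For the inequality $\mult_{\widetilde E}N_s'\geq\mult_E N_s$: any effective $L\sim_{\Q}f^*D$ on $Y$ has $f_*L\geq 0$ and $f_*L\sim_{\Q}f_*f^*D=D$, hence $f_*L\geq N_s$ by the inf-description on $X$; therefore $\mult_{\widetilde E}L=\mult_E(f_*L)\geq\mult_E N_s$, and taking the infimum over all such $L$ gives the claim.

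For the reverse inequality: any effective $L'\sim_{\Q}D$ on $X$ has $f^*L'\geq 0$ and $f^*L'\sim_{\Q}f^*D$, hence $f^*L'\geq N_s'$; therefore $\mult_E L'=\mult_{\widetilde E}(f^*L')\geq\mult_{\widetilde E}N_s'$, and taking the infimum over all such $L'$ gives $\mult_E N_s\geq\mult_{\widetilde E}N_s'$. Combining the two inequalities yields the asserted equality. (In fact this argument shows $\mult_{\widetilde E}N_s'=\mult_E N_s$ for \emph{every} prime divisor $E$ on $X$, not merely for the components of $N_s$.)

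The argument requires no vanishing theorem or multiplier ideal, in contrast with the analogous Proposition \ref{prop-pnkltQ}. The only point needing care — and hence the main, if mild, obstacle — is the multiplicity bookkeeping of the first paragraph: one must verify that the strict transform appears with coefficient exactly $1$ in the total transform of a prime divisor and that no other component of $f^*L'$ lies over $E$, and dually that $f_*$ behaves as stated on strict transforms and exceptional divisors. These are standard, but they are precisely where $\Q$-factoriality of $X$ and $Y$ and the birationality of $f$ are genuinely used.
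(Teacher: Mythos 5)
Your proof is correct. It takes a somewhat different route from the paper's: the paper works entirely with the section-ring minimality of $P_s$ from Lemma \ref{prop-sd}~(2), first deducing $N_s'\geq f^*N_s$ (hence one inequality), and then ruling out strict inequality by contradiction --- if $\mult_{f^{-1}_*E}N_s'>\mult_EN_s+\epsilon$ then $R(P_s-\epsilon E)\simeq R(P_s)$, violating minimality of $P_s$. You instead invoke the alternative description $N_s(D)=\inf\{L\mid L\sim_{\Q}D,\ L\geq 0\}$ and transport effective representatives directly by $f_*$ and $f^*$, which gives both inequalities symmetrically, avoids the contradiction step, and shows the equality for every prime divisor $E$ on $X$ rather than only for components of $N_s$ (the paper's argument would also yield this, but does not say so). The two arguments rest on the same underlying minimality property; yours is cleaner but leans on the asserted (unproved in the paper) equivalence of the $\limsup$ and infimum definitions of the $s$-decomposition, whereas the paper stays within the formalism in which Lemma \ref{prop-sd} is actually stated and proved. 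Your multiplicity bookkeeping (coefficient $1$ of the strict transform in $f^*E$, no other component of $f^*L'$ dominating $E$, and $f_*$ on strict transforms) is the standard consequence of $X$ being normal, so that $f$ identifies $\mathcal{O}_{Y,f^{-1}_*E}$ with $\mathcal{O}_{X,E}$; this is fine.
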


\begin{proof}
Since $N_s' \geq f^*N_s$ by Lemma \ref{prop-sd} (2), it follows that
$$
\mult_{f^{-1}_*E}N_s' \geq \mult_{f^{-1}_*E} f^*N_s=\mult_E N_s.
$$
Suppose that $\mult_{f^{-1}_*E}N_s' > \mult_E N_s$. Take a rational number $\epsilon>0$ such that $\mult_{f^{-1}_*E}N_s' - \mult_E N_s > \epsilon$. Then $f^*P_s - \epsilon f^{-1}_* E \geq P_s'$ and $R(P_s') \simeq R(f^*P_s - \epsilon f^{-1}_* E) \simeq R(f^*P_s)$. Thus $R(P_s- \epsilon E) \simeq R(P_s)$, which is a contradiction to the minimal property of $P_s$. Thus $\mult_{f^{-1}_*E}N_s' = \mult_E N_s$.
\end{proof}

As in Definition \ref{def-pnklt}, we can also define \emph{potentially $s$-klt} or
\emph{potentially $s$-lc pairs}
in a similar manner using the potential $s$-discrepancy $\oa_s(\sigma; X, \Delta)$.
However, we will not go into further details in this paper.

We give an application of the above potentially $s$-singular pairs.

\begin{theorem}\label{thrm-charcy}
Let $X$ be a normal projective variety. Then $X$ is of klt Calabi-Yau type if and only if
$\mathfrak A_s(X)>-1$ and $-K_X$ is effective.
\end{theorem}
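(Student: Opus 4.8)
The plan is to prove the two implications separately; the substance is in the ``if'' direction, which I would run parallel to the proof of Proposition~\ref{prop-pnkltQ}, with the $s$-decomposition replacing the divisorial Zariski decomposition and fixed parts of linear systems replacing asymptotic multiplier ideals.

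\emph{The ``only if'' direction} is immediate: if $(X,\Delta)$ is a klt Calabi--Yau pair with $\Delta$ an effective $\Q$-divisor, then $-K_X\sim_{\Q}\Delta\ge0$ is effective, and since $-(K_X+\Delta)\sim_{\Q}0$ its pullback to any resolution $Y\to X$ is $\Q$-linearly trivial, so its $s$-negative part vanishes; hence $\oa_s(\sigma;X,\Delta)=a(\sigma;X,\Delta)$ for every divisorial valuation $\sigma$, and $\mathfrak A_s(X)\ge\mathfrak A_s(X,\Delta)=\inf_\sigma a(\sigma;X,\Delta)>-1$ because $(X,\Delta)$ is klt.

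\emph{The ``if'' direction.} Assume $\mathfrak A_s(X)>-1$ and $-K_X$ effective, and fix an effective $\Q$-divisor $\Delta$ with $D_X:=-(K_X+\Delta)$ effective and $\delta:=1+\mathfrak A_s(X,\Delta)>0$. Since $\mult_E N_s\ge0$ always, $a(\sigma;X,\Delta)\ge\oa_s(\sigma;X,\Delta)\ge\mathfrak A_s(X,\Delta)>-1$ for every $\sigma$, so $(X,\Delta)$ is already klt; it then suffices to produce an effective $\Q$-divisor $D\sim_{\Q}D_X$ with $(X,\Delta+D)$ klt, since then $-(K_X+\Delta+D)\sim_{\Q}0$ and $(X,\Delta+D)$ is a klt Calabi--Yau pair, so $X$ is of klt Calabi--Yau type. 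To build $D$ I would, exactly as in the proof of Proposition~\ref{prop-pnkltQ}, choose a log resolution $\psi\colon W\to X$ of $(X,\Delta)$ and a large divisible integer $m$ such that the mobile part $M_m:=M(m\psi^*D_X)=m\psi^*D_X-E_m$ of $|m\psi^*D_X|$ is base point free, $E_m+\Exc(\psi)+\psi^{-1}_*\Delta$ is a simple normal crossing divisor, and moreover $\tfrac1m\mult_E E_m\le\mult_E N_s'+\tfrac{\delta}{2}$ holds for every prime divisor $E$ on $W$, where $\psi^*D_X=P_s'+N_s'$ is the $s$-decomposition on $W$; the last inequality is plausible because $M_m\le mP_s'$ (so $\tfrac1mE_m\ge N_s'$) and $\tfrac1mM_m$ approaches $P_s'$ as $m$ grows. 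Taking $M_0\in|M_m|$ general (so $\mult_E M_0=0$ for every prime divisor $E$ on $W$, and $\psi$ is a log resolution of $(X,\Delta+D)$) and $g\in k(X)=k(W)$ with $M_0+E_m=\psi^*(mD_X)+\operatorname{div}_W(g)$, set $D:=D_X+\tfrac1m\operatorname{div}_X(g)=\tfrac1m\psi_*(M_0+E_m)$; then $D\ge0$, $D\sim_{\Q}D_X$, and $\psi^*D=\tfrac1mM_0+\tfrac1mE_m$. Using $\oa_s(E;X,\Delta)=a(E;X,\Delta)-\mult_E N_s'$ (Lemma~\ref{cons-neg-s-decomp}) and $\oa_s(E;X,\Delta)\ge\mathfrak A_s(X,\Delta)=-1+\delta$, one computes, for every prime divisor $E$ on $W$,
$$
\begin{aligned}
a(E;X,\Delta+D)=a(E;X,\Delta)-\mult_E\psi^*D&=a(E;X,\Delta)-\tfrac1m\mult_E E_m\\
&\ge\oa_s(E;X,\Delta)-\tfrac{\delta}{2}\ \ge\ -1+\tfrac{\delta}{2}\ >\ -1,
\end{aligned}
$$
so $(X,\Delta+D)$ is klt. (As a variant, one may instead perturb $P_s'$ by a small effective divisor using Lemma~\ref{prop-sd}~(3) and run the same discrepancy estimate.)

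\emph{The hard part} is securing the inequality $\tfrac1m\mult_E E_m\le\mult_E N_s'+\tfrac{\delta}{2}$ for \emph{all} prime divisors $E$ on $W$ at once --- equivalently, controlling the singularities of $D$ at every divisor over $X$. In the big setting of Proposition~\ref{prop-pnkltQ} this is delivered by the comparison between base ideals and asymptotic multiplier ideals and a uniform bound on multiplicities; here it must instead be extracted from $M_m\le mP_s'$ together with the convergence $\tfrac1mM_m\to P_s'$, and one has to reconcile the size of $m$ with the choice of $\psi$ (on which $M_m$ is to be base point free) and to verify that passing to such a $\psi$ does not spoil the approximation. That $P_s'$ is defined by a $\limsup$ rather than an honest limit is only a minor annoyance, since the mobile parts $M(n\psi^*D_X)$ are superadditive in $n$ and, the stable base locus of a $\Q$-divisor being computed by $|m\psi^*D_X|$ for $m$ large and divisible, the divisors in play stabilize.
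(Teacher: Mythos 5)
Your overall strategy coincides with the paper's: the ``only if'' direction is handled identically, and in the ``if'' direction the paper likewise reduces to producing an effective $D\sim_{\Q}-(K_X+\Delta)$ with $(X,\Delta+D)$ klt by approximating $P_s$ with the mobile parts $\frac1m M_m$ on a log resolution and running exactly the discrepancy estimate you write down. That part of your proposal, including the final computation $a(E;X,\Delta+D)\ge \oa_s(E;X,\Delta)-\delta/2>-1$, is correct.

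However, the inequality $\frac1m\mult_E E_m\le \mult_E N_s'+\frac{\delta}{2}$, which you yourself label ``plausible'' and ``the hard part,'' is precisely the crux of the theorem, and your proposal does not prove it; as written this is a genuine gap. Two remarks. First, the uniformity ``over all prime divisors at once'' is less fearsome than you suggest: $\Supp E_m$ and $\Supp N_s'$ are both contained in $\Supp \psi^*D_X$ for every $m$, so only finitely many divisors (independent of $m$) are in play, and superadditivity of $n\mapsto M(n\psi^*D_X)$ together with Fekete's lemma gives the componentwise convergence $\frac1m\mult_E E_m\to\mult_E N_s'$ on a \emph{fixed} model. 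The genuine difficulties are (i) reconciling this with the roundings in $M_{\Z}(\lfloor\cdot\rfloor)$ and with the fact that the model on which $M_m$ becomes base point free depends on $m$, so that new exceptional divisors can enter $\Supp E_m$ and must also be controlled (Lemma \ref{cons-neg-s-decomp} only handles strict transforms of components of $N_s$), and (ii) klt-ness must be checked on a log resolution of the final pair, which forces the base-point-freeness issue. The paper resolves exactly this point by a comparison of fixed parts at two levels $m'>m$: in the proof of Proposition \ref{prop-pnkltQ} the bound on $\frac1m E_m-\frac1{m'}E_{m'}$ is extracted from asymptotic multiplier ideals via \cite[Theorem 11.2.21]{posII}, which requires bigness, and in the present theorem the paper invokes \cite[Theorem 1.12]{M} to obtain the same multiplicity bound when $-(K_X+\Delta)$ is merely effective, then reruns the argument of Proposition \ref{prop-pnkltQ} with Lemma \ref{prop-sd} (3) in place of Lemma \ref{lem-divzd} (3). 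To complete your proof you would need to supply this input (or an equivalent), rather than assert the convergence.
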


\begin{proof}
If $(X, \Delta)$ is a klt Calabi-Yau pair, then $\inf_{\sigma}\oa_s(\sigma; X, \Delta)=\inf_{\sigma}a(\sigma; X, \Delta) > -1$ holds. Thus we obtain $\mathfrak A _s(X)>-1$. It is clear that $-K_X$ is effective.

For the converse, we assume that $\mathfrak A_s(X) > -1$ and $-K_X$ is effective. Fix a sufficiently small $\epsilon >0$ such that $\mathfrak A_s(X) > -1+\epsilon$.
Then there is an effective $\Q$-divisor $\Delta$ such that $-(K_X + \Delta)$ is an effective $\Q$-Cartier divisor and $\oa_s(\sigma; X, \Delta) > -1+\epsilon$ for any divisorial valuation $\sigma$.
Fix a sufficiently large integer $m_0>0$, and consider a log resolution $f : Y \to X$ of $(X, \Delta)$ and $|-m(K_X+\Delta)|$ for a sufficiently divisible positive integer $m < m_0$.
Then we have the decomposition $f^*(-m(K_X+\Delta))=M_m + E_m$ into a base point free divisor $M_m$ and a fixed divisor $E_m$. In Proof of Proposition \ref{prop-pnkltQ}, we proved that
if $-(K_X+\Delta)$ is big, then for any small $\epsilon'>0$, we may take sufficiently divisible and large integers $m'>m$ so that the multiplicity of the effective divisor $\frac{1}{m}E_m - \frac{1}{m'}E_{m'}$ at every prime divisor is bounded above by $\epsilon'$.
Using \cite[Theorem 1.12]{M}, we can show that this statement still holds when $-(K_X+\Delta)$ is effective. Now by the same argument of Proof of Proposition \ref{prop-pnkltQ} using Lemma \ref{prop-sd} (3) instead of Lemma \ref{lem-divzd} (3), we can find an effective divisor $D$ on $X$ such that $-(K_X+\Delta+D) \sim_{\Q} 0$ and $(X, \Delta+D)$ is klt. This finishes our proof.
\end{proof}

\section{Rational connectedness modulo potential non-klt locus}\label{sec-RCC of pklt}
In this section, as another application of the potentially non-klt locus,
we prove Theorem \ref{main thrm2}.
This improves the results on the rational connectedness of uniruled varieties, e.g., \cite[Theorem 0.1]{KMM}, \cite[Corollaire 3.2]{C}, \cite[Theorem 1]{Z}, \cite[Theorem 1.2]{HM2},  \cite[Theorem 1.2]{BP}.
Before proving the theorem, we take a look at some examples and prove some basic properties.

We first prove the connectedness of the potentially non-klt locus.

\begin{proposition}\label{connedted-pnklt}
Let $(X, \Delta)$ be a pair. If $-(K_X + \Delta)$ is big, then $\pnklt(X, \Delta)$ is connected.
\end{proposition}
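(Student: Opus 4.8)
The plan is to reduce the connectedness of $\pnklt(X,\Delta)$ to the classical Shokurov–Kollár connectedness theorem for the non-klt locus of a pair whose log-anticanonical divisor is big (see \cite{KM}). By Proposition \ref{prop-pnkltR}, since $-(K_X+\Delta)$ is big, there exists an effective $\R$-Cartier divisor $D \sim_\R -(K_X+\Delta)$ such that
$$
\nklt(X,\Delta+D) = \pnklt(X,\Delta).
$$
Now $K_X + (\Delta+D) \sim_\R K_X + \Delta - (K_X+\Delta) = 0$, so $-(K_X+\Delta+D)$ is numerically trivial; in particular it is nef and big is not needed — what we actually need is that $-(K_X + \Delta + D)$ is (numerically) trivial, hence certainly nef.

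The key step is then to invoke the connectedness theorem: if $(X,\Gamma)$ is a pair and $-(K_X+\Gamma)$ is nef and big (or, in the relative formulation, if $g\colon X'\to X$ is a log resolution and $\lceil \sum (a_i+1 - \text{positive part})\rceil$ has the right sign), then the non-klt locus $\nklt(X,\Gamma)$ is connected near any fiber — and in the absolute setting over a point, it is connected. First I would pass to a log resolution $g\colon Y\to X$ of $(X,\Delta+D)$ and write $K_Y = g^*(K_X+\Delta+D) + \sum a_i E_i$; since $g^*(K_X+\Delta+D)\equiv 0$, the divisor $\lceil -\sum a_i E_i\rceil = \lceil A - F\rceil$ (where $A$ is the $g$-exceptional part with $a_i>-1$ pieces, $F$ the part with $a_i\le -1$) is $g$-nef and $g$-big up to a numerically trivial twist, and the Kawamata–Viehweg-type vanishing $R^1 g_* \mathcal{O}_Y(\lceil A-F\rceil) = 0$ forces $\Supp F$ (whose image is exactly $\nklt(X,\Delta+D)$) to be connected. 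This is precisely the standard proof of connectedness of the non-klt locus, so I would cite \cite[Theorem 5.48 or §17]{KM} rather than reproduce it.

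The main obstacle — and the reason this is stated as a separate proposition rather than folded into an earlier corollary — is that $D$ is only an $\R$-divisor and $-(K_X+\Delta+D)$ is merely numerically trivial, not $\sim_\R 0$ and not necessarily nef-and-big with a strictly positive part; one must be slightly careful that the connectedness theorem applies in the form "$-(K_X+\Gamma)$ nef and big" when here it is nef and numerically trivial. This is handled by the observation that $\pnklt(X,\Delta) = \bigcap_{i} \pnklt(X,\Delta_i)$ over a decreasing sequence of $\Q$-divisors $\Delta_i \searrow \Delta$ with $-(K_X+\Delta_i)$ big (Corollary \ref{cor-r to q} / Proposition \ref{prop-intersection pnklt}), combined with the fact that, for each $i$, we may apply Proposition \ref{prop-pnkltQ} to get $D_i$ with $\nklt(X,\Delta_i+D_i)=\pnklt(X,\Delta_i)$ and then perturb: replacing $D_i$ by $(1-\delta)D_i + \delta D_i' $ for a suitable effective $D_i'\sim_\R -(K_X+\Delta_i)$ with small multiplicities (Lemma \ref{lem-divzd}(3)) keeps the non-klt locus unchanged while making $-(K_X+\Delta_i + \text{(new }D_i))$ genuinely big and nef is not automatic — so instead I would simply note $-(K_X+\Delta_i+D_i)\sim_\Q 0$, which is even better than nef and big for the vanishing argument. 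Thus each $\pnklt(X,\Delta_i)$ is connected, and since these form a decreasing sequence of nonempty (if nonempty) closed connected sets that stabilizes by the Noetherian property, the intersection $\pnklt(X,\Delta)$ is connected as well. The only genuinely delicate point is the case $\pnklt(X,\Delta)=\emptyset$, which is vacuously connected, and ensuring the stabilized intersection is literally one of the $\pnklt(X,\Delta_i)$ so that connectedness transfers directly.
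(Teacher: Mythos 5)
There is a genuine gap at the crucial step. After reducing to $\nklt(X,\Delta+D)=\pnklt(X,\Delta)$ with $D\sim_{\R}-(K_X+\Delta)$, you try to apply the Shokurov--Koll\'ar connectedness lemma to the pair $(X,\Delta+D)$, and you justify this by asserting that $-(K_X+\Delta+D)\sim_{\Q}0$ is ``even better than nef and big for the vanishing argument.'' This is false. The connectedness lemma for the absolute setting (over a point) requires $-(K_X+\Gamma)$ to be nef \emph{and big}; for the vanishing $H^1(Y,\lceil-\Gamma_Y\rceil)=0$ one needs $\lceil-\Gamma_Y\rceil-K_Y\equiv -g^*(K_X+\Gamma)+(\text{klt fractional part})$ to be nef and big, and bigness fails when $-(K_X+\Gamma)\equiv 0$. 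The conclusion genuinely fails for numerically trivial log anticanonical divisors: $(\P^1,\{0\}+\{\infty\})$ has $K_{\P^1}+\{0\}+\{\infty\}\sim 0$ and disconnected non-klt locus. Your closing Noetherian-intersection argument does not repair this, because each $\pnklt(X,\Delta_i)=\nklt(X,\Delta_i+D_i)$ still has $-(K_X+\Delta_i+D_i)\sim_{\Q}0$, so connectedness of the individual terms is exactly what remains unproved.

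The fix is the perturbation you sketched and then abandoned, and it is where the real content of the proposition lies. Since $D$ is big, write $D\sim_{\R}A+B$ with $A$ ample and $B\geq 0$, and set $\Gamma:=\Delta+(1-\epsilon)D+\epsilon B$, so that $-(K_X+\Gamma)\sim_{\R}\epsilon A$ is ample and the connectedness lemma applies to $\nklt(X,\Gamma)$. The inclusion $\nklt(X,\Gamma)\subseteq\nklt(X,\Delta+D)$ is clear for small $\epsilon$, but the reverse inclusion is not automatic and must be proved; this is precisely where the identification of $\nklt(X,\Delta+D)$ with the \emph{potentially} non-klt locus is used. One argues on a log resolution $f\colon Y\to X$: if a prime divisor $F_0$ had $a(F_0;X,\Delta+D)\leq -1$ but $a(F_0;X,\Gamma)>-1$, then comparing $-\epsilon f^*D+\epsilon f^*B$ with the divisorial Zariski decomposition $f^*D=P+N$ (using $f^*B\geq N$ from Lemma \ref{lem-divzd}~(2)) forces $\mult_{F_0}P>0$, and a direct computation gives $\oa(F_0;X,\Delta)=\mult_{F_0}P-1>-1$, contradicting $f(F_0)\subseteq\pnklt(X,\Delta)$. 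Without some argument of this kind, the reduction to an ample $-(K_X+\Gamma)$ is incomplete, and without that reduction the connectedness theorem simply does not apply.
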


\begin{proof}
By Proposition \ref{prop-pnkltR}, there exists an effective $\R$-Cartier divisor $D$ such that
$D \sim_{\R} -(K_X+\Delta)$ and $\pnklt(X, \Delta)=\nklt(X, \Delta+D)$.
Since $D$ is big, we have $D \sim_{\R} A + B$ for an ample $\R$-divisor $A$ and an effective $\R$-divisor $B$. Let $\Gamma:=\Delta + (1-\epsilon)D+\epsilon B$ for a sufficiently small rational number $\epsilon >0$ such that $\nklt(X, \Gamma) \subseteq \nklt(X, \Delta+D)$.
Suppose that $\nklt(X, \Gamma) \subsetneq \nklt(X, \Delta+D)$.
For a log resolution $f: Y \to X$ of $(X, \Delta+D)$, we can write
$$
-K_Y = f^*(-(K_X+\Delta+D)) + F = f^*(-(K_X+\Gamma)) -\epsilon f^*D + \epsilon f^*B + F.
$$
Then there is a prime divisor $F_0$ such that $\mult_{F_0} F \geq 1$ and $\mult_{F_0}( -\epsilon f^*D + \epsilon f^*B + F )< 1$.
Since $\epsilon$ is sufficiently small, we can assume that $\mult_{F_0} F = 1$.
Let $f^*D = P+N$ be the divisorial Zariski decomposition. By Lemma \ref{lem-divzd} (2), $f^*B \geq N$. Since
$$
1>\mult_{F_0} (-\epsilon f^*D + \epsilon f^*B + F) = \mult_{F_0} (-\epsilon P + \epsilon (f^*B -N) + F) \geq -\epsilon \mult_{F_0} P + 1,
$$
we get $\mult_{F_0} P >0$. On the other hand, we have
$$
\oa(F_0, X, \Delta)= -\mult_{F_0}(-f^*D + F) - \mult_{F_0} N = \mult_{F_0} P - 1 > -1.
$$
However, $f(F_0) \subseteq \nklt(X, \Delta+D) = \pnklt(X, \Delta)$, so we get a contradiction. Thus $\nklt(X, \Gamma) = \nklt(X, \Delta+D)$.
Since $-(K_X + \Gamma) \sim_{\R} \epsilon A$ is ample, the connectedness of $\pnklt(X,\Delta)=\nklt(X, \Gamma)$ follows from the connectedness lemma of Shokurov and Koll\'{a}r on non-klt locus (\cite[Theorem 17.4]{Ko+}).
\end{proof}

We can easily construct a variety $X$ such that $-K_X$ is only pseudoeffective and $\pnklt(X, 0)$ is not connected.

\begin{example}
Let $S$ be an extremal rational elliptic surface with two singular fibers $F_1, F_2$ of the same type $\widetilde{D}_4$ (see e.g., \cite[Example 5.1]{HP1}). Each singular fiber $F_i$ has an irreducible component $G_i$ such that $\mult_{G_i} F_i=2$ for $i=1,2$.
Let $f: \widetilde{S} \to S$ be a blow-up at two points $p_1, p_2$, where each $p_i$ is a general point on $G_i$ for $i=1,2$. We can easily check that $\kappa(-K_{\widetilde{S}})=0$ and $\pnklt(\widetilde{S},0)=f^{-1}_* G_1 \cup f^{-1}_* G_2$. In particular, $\pnklt(\widetilde{S},0)$ is not connected.
\end{example}

Recall that by \cite[Theorem 1.2]{BP} if $(X,\Delta)$ is a pair with an effective $\Q$-divisor $\Delta$ on $X$ such that $-(K_X+\Delta)$ is big, then $X$ is rationally connected modulo $\nklt(X,\Delta) \cup \nnef(-(K_X+\Delta))$.
(In \cite{BP}, the notation $\nnef$ is used to mean $\B_-$, the restricted base locus.
However, $\nklt(X,\Delta) \cup \B_-(-(K_X+\Delta))=\nklt(X,\Delta) \cup \nnef(-(K_X+\Delta))$
because $\B_-(-(K_X+\Delta)$ and $\nnef(-(K_X+\Delta))$ can differ only in $\nklt(X,\Delta)$.
Our current usage of the notation $\nnef$ seems more common.)
By \cite[Theorem 1.2]{HM2}, if $f: X \to Y$ is a birational morphism such that $-(K_Y+\Delta)$ is big and semiample, then $X$ is rationally chain connected modulo $f^{-1}(\nklt(Y,\Delta))$. We remark that this locus $f^{-1}(\nklt(Y,\Delta))$ contains $\nklt(X, f^{-1}_*\Delta)\cup \nnef(-(K_X+f^{-1}_*\Delta))$.

The following example shows that there exists a variety $X$ having a divisor $\Delta$ such that $-(K_X+\Delta)$ is big
which is not rationally connected modulo any irreducible component of
$\nklt(X,\Delta)\cup\nnef(-(K_X+\Delta))$ not contained in $\pnklt(X,\Delta)$.
Thus Theorem \ref{main thrm2} improves the main result of \cite{BP}.

\begin{example}\label{improvebp}
Let $C$ be a smooth projective curve of genus $g > 1$ and $A$ a divisor of degree $e > 2g-2$ on $C$.
Consider the ruled surface $X:=\P(\mathcal{O}_C \oplus \mathcal{O}_C(-A))$ with the ruling $f : X \to C$.
Then $-K_X$ is big, and the negative part of the Zariski decomposition
$-K_X = P+N$ is given by $N=\left(1+\frac{2g-2}{e} \right)C_0$
where $C_0$ is a section of $f$ corresponding to the canonical projection
$\mathcal{O}_C(-A) \oplus \mathcal{O}_C \to \mathcal{O}_C(-A)$.
Let $\pi : \widetilde{X} \to X$ be the blow-up of $X$ at a point on $C_0$ with the exceptional divisor $E$.
The Zariski decomposition $-K_{\widetilde{X}}=P' + N'$ is given by $P'=\pi^*P$ and $N'=\pi^*N - E= \left(1+\frac{2g-2}{e} \right)\pi_*^{-1}C_0 + \frac{2g-2}{e}E$.
Note that $0 < \mult_E(N') < 1$.
Thus $\nklt(\widetilde{X},0)\cup \nnef(-K_{\widetilde{X}})$ consists of two components $E$ and $\pi_*^{-1}C_0$, and $\pnklt(\widetilde{X},0)$ consists of the single component $\pi_*^{-1}C_0$.
Clearly, $\widetilde{X}$ is not rationally connected modulo $E$, but $\widetilde{X}$ is rationally connected modulo $\pi_*^{-1}C_0$.
\end{example}

We give an example of the second case (2) of Theorem \ref{main thrm2}.

\begin{example}\label{secondcaseoccurs}
Let $X=\P^n \times A$ where $A$ is an abelian variety.
Then $-K_X$ is semiample so that $\pnklt(X, 0)=\emptyset$. However, $X$ is not rationally connected. We can take a non-zero effective divisor $\Delta \sim_{\Q} -K_X$ such that $(X, \Delta)$ is a klt Calabi-Yau pair. We can check that $X$ is rationally connected modulo every irreducible component of $\Delta$.
\end{example}

Now we prepare some ingredients for the proof of Theorem \ref{main thrm2}.

\begin{lemma}[cf. {\cite[Lemma 4.2]{HM2}}]\label{dominate-rcc}
For a normal projective variety $X$, let $f : Y \to X$ be a resolution, and let $\pi : Y \dashrightarrow Z$ be the MRC-fibration. If a subset $V$ of $Y$ dominates $Z$ via $\pi$, then $X$ is rationally connected modulo $f(V)$.
\end{lemma}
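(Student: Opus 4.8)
The plan is to reduce the assertion to a single general fiber of the MRC-fibration and to build the required rational curve inside that fiber. First I would put $\pi$ into convenient form: by \cite[Theorem IV.5.4]{K1} we may shrink $Z$ so that $\pi$ restricts to a proper surjective morphism $\pi^{\circ}:Y^{\circ}\to Z^{\circ}$ between dense open subsets, with $Y\setminus Y^{\circ}$ not dominating $Z$ (so that $\pi$ is almost holomorphic) and with every fiber of $\pi^{\circ}$ rationally chain connected; after shrinking $Z^{\circ}$ further, generic smoothness makes the fiber $F_{z}:=(\pi^{\circ})^{-1}(z)$ over a general $z$ a smooth projective variety, hence a smooth rationally connected variety by \cite[Corollary 1.5]{HM2}. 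Since $V$ has finitely many irreducible components and dominates $Z$ via $\pi$, one of them, say $W$, dominates $Z$; as $Y\setminus Y^{\circ}$ does not dominate $Z$, the open subset $W^{\circ}:=W\cap Y^{\circ}$ still dominates $Z^{\circ}$ via $\pi^{\circ}$, so by Chevalley's theorem its image contains a dense open $Z_{1}\subseteq Z^{\circ}$, which we may take inside the locus where $F_{z}$ is smooth and rationally connected. Let $C$ be the irreducible component of $\overline{f(V)}$ containing $\overline{f(W)}$.

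Next I would carry out the curve-building step. Since $f$ is proper birational, it is an isomorphism over a dense open subset $X_{0}\subseteq X$; choose a dense open $X_{1}\subseteq X_{0}$ small enough that, for every $x\in X_{1}$, the unique preimage $y:=f^{-1}(x)\in Y$ lies in $(\pi^{\circ})^{-1}(Z_{1})$ and is a general point of its fiber $F_{z}$, where $z:=\pi(y)$ (this is possible because $(\pi^{\circ})^{-1}(Z_{1})\to Z_{1}$ is dominant with irreducible general fibers). Fix such an $x$. Then $F_{z}$ is a smooth rationally connected variety, and $W^{\circ}\cap F_{z}\neq\emptyset$ since $W^{\circ}$ dominates $Z^{\circ}$ and $z\in Z_{1}\subseteq\pi^{\circ}(W^{\circ})$; pick $w\in W^{\circ}\cap F_{z}$. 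In a smooth rationally connected variety a suitable rational curve through a general point passes through any prescribed point (see \cite[Chapter IV]{K1}), so there is a rational curve $B\subseteq F_{z}$ through $y$ and $w$. Finally $f(B)$, being the image of $\P^{1}$, is either a point or a rational curve, and it cannot be a point, because $x=f(y)$ lies in $X_{0}$, forcing $f^{-1}(x)=\{y\}$, whereas $B$ is not reduced to a point. Hence $f(B)$ is a rational curve joining $x=f(y)$ to $f(w)\in f(W)\subseteq C$, which shows that $X$ is rationally connected modulo $f(V)$. (If $Z$ is a point, the same argument applies with $F_{z}=Y$; alternatively, $Y$ and hence $X$ is rationally connected while $f(V)\neq\emptyset$.)

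I expect the main obstacle to be organizational rather than conceptual: the delicate part is the nested choice of the dense open subsets $Y^{\circ}$, $(\pi^{\circ})^{-1}(Z_{1})$, $X_{1}$ so that the word ``general'' is preserved through all the restrictions, and in particular so that the chosen $y$ is genuinely a general point of its fiber $F_{z}$. The only inputs that are not purely formal are the existence of a single rational curve through a general point and an arbitrary point of a smooth rationally connected variety (which is classical, \cite[Chapter IV]{K1}), and the elementary fact that a proper birational morphism is an isomorphism over a dense open, used to guarantee that $f$ does not contract $B$.
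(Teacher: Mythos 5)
Your proposal is correct and follows essentially the same route as the paper: restrict to a general fiber of the MRC-fibration, which is a smooth rationally connected variety, join the (general) point $y=f^{-1}(x)$ to a point of $V$ lying in that fiber by a rational curve, and push the curve down by $f$, noting it is not contracted since $f$ is an isomorphism near $x$. The paper's proof is just a terser version of the same argument, and your extra care with the nested open sets and with identifying the component $C$ of $f(V)$ only makes explicit what the paper leaves implicit.
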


\begin{proof}
If $V = \emptyset$, then $Z$ must be a point so that $X$ is rationally connected. Assume that $V \neq \emptyset$.
We need to show that for a general point $x \in X$, there is a rational curve $R$ passing through $x$ and intersecting $f(V)$.
Let $y \in Y$ be a general point with $f(y)=x$. Since a general fiber of $\pi$ is a smooth rationally connected variety, there is a rational curve $R'$ passing through $y$ and intersecting $V$. Then the rational curve $R=f(R')$ passes through $x$ and intersects $f(V)$.
\end{proof}

The following lemma is essentially due to Campana (\cite[Theorem 4.13]{Ca}).

\begin{lemma}[{\cite[Corollary 2.2]{BP}}]\label{cam-weakpos}
Let $\pi : Y \to Z$ be a contraction of smooth projective varieties and $\Gamma$ an effective $\Q$-divisor on $Y$ such that $(Y, \Gamma)$ is lc on a general fiber of $\pi$. Let $W$ be a general fiber of $\pi$ and suppose that $\kappa(W, m(K_{Y}+\Gamma)|_W ) \geq 0$. Then for every ample $\Q$-divisor $H$ on $Z$, we have $\kappa(K_{Y/Z}+\Gamma + \pi^*H) > 0$.
\end{lemma}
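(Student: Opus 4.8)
The plan is to recognize this as the fibration-type positivity statement \cite[Corollary 2.2]{BP}, itself a consequence of Campana's \cite[Theorem 4.13]{Ca}, and to prove it through Viehweg-style weak positivity of relative log-pluricanonical sheaves extended to the lc setting. Since $\kappa$ is a birational invariant depending only on the $\Q$-linear class and unaffected by scaling, I would first normalize: replace $\Gamma$ by an integral multiple and take $H$ to be an integral effective ample divisor, set $F:=K_{Y/Z}+\Gamma$, and reduce the goal to showing that $h^{0}\big(Y,\, am(K_{Y/Z}+\Gamma+\pi^{*}H)\big)$ is unbounded as $a\to\infty$ for a single suitable $m$ — this is exactly $\kappa(Y,K_{Y/Z}+\Gamma+\pi^{*}H)\ge 1$, i.e.\ $>0$.

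Next I would set up the direct image sheaf on $Z$. For a general fibre $W$ of $\pi$, adjunction (the general fibre has trivial normal bundle) gives $F|_{W}=K_{W}+\Gamma|_{W}$, the pair $(W,\Gamma|_{W})$ is lc by hypothesis, and $\kappa\big(W,F|_{W}\big)\ge 0$. Consequently, for every sufficiently divisible $m>0$ the torsion-free sheaf $\mathcal F_{m}:=\pi_{*}\mathcal O_{Y}(mF)$ on $Z$ is nonzero, of rank $r:=h^{0}\big(W,mF|_{W}\big)\ge 1$.

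The substance of the argument is then weak positivity: because $(Y,\Gamma)$ is lc over the generic point of $Z$, Campana's orbifold version of Viehweg's theorem shows $\mathcal F_{m}$ is weakly positive over a dense open of $Z$. Feeding this into Viehweg's fibration argument — a nonzero weakly positive sheaf has pseudoeffective determinant, so $\det\mathcal F_{m}+rmH$ is big on $Z$, and the standard fibre-product/covering construction relating $\det\mathcal F_{m}$ to $\pi_{*}$ of a higher multiple of $F$ transfers this bigness upstairs — one obtains
\[
\kappa\big(Y,\, N(K_{Y/Z}+\Gamma+\pi^{*}H)\big)\;\ge\;\kappa\big(Z,\, \det\mathcal F_{m}+rmH\big)\;=\;\dim Z\;\ge\;1
\]
for $N\gg 0$, which is the required inequality $\kappa(Y,K_{Y/Z}+\Gamma+\pi^{*}H)>0$. (When $\dim Z=0$ there is nothing to add since $\pi^{*}H=0$; the statement is understood with $\dim Z\ge 1$, which is the only case used.)

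The hard part is precisely this weak positivity input in the \emph{lc} rather than klt generality: the hypothesis only makes $(Y,\Gamma)$ lc on the general fibre, so $\Gamma|_{W}$ may carry coefficient-one components and Viehweg's classical weak positivity theorem for klt pairs does not apply verbatim; one must invoke Campana's orbifold weak positivity (or run a careful fibrewise log resolution tracking the lc places). By contrast, the remaining steps — matching the weak-positivity locus with the locus of general fibres, the passage from weak positivity of $\mathcal F_{m}$ to pseudoeffectivity of $\det\mathcal F_{m}$ and thence to the pluricanonical comparison, and the reduction to integral data — are routine or formal.
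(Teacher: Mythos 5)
The paper does not prove this lemma at all: it is quoted verbatim as \cite[Corollary 2.2]{BP}, which in turn rests on Campana's orbifold additivity theorem \cite[Theorem 4.13]{Ca}, so there is no in-paper argument to compare against. Your outline reconstructs essentially the proof found in those sources --- nonvanishing of $\pi_*\mathcal O_Y(m(K_{Y/Z}+\Gamma))$ from $\kappa(W,(K_Y+\Gamma)|_W)\ge 0$, weak positivity of that direct image in the lc setting, bigness of $\det\mathcal F_m + rmH$ on $Z$, and Viehweg's easy-additivity transfer giving $\kappa(K_{Y/Z}+\Gamma+\pi^*H)\ge\dim Z\ge 1$ --- and you correctly isolate the only genuinely hard input, namely weak positivity for lc (rather than klt) pairs, which is exactly what Campana's theorem supplies and which your sketch, like the paper, ultimately defers to that reference. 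You are also right to note the implicit assumption $\dim Z\ge 1$, without which the conclusion fails; this is harmless since the lemma is only invoked for the MRC-fibration with $\dim Z>0$.
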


We now prove Theorem \ref{main thrm2} (1).
The proof is similar to that of \cite[Lemma 4.2]{BP}.

\begin{theorem}\label{thrm-RC via pnklt}
Let $(X, \Delta)$ be a pair such that $-(K_X+\Delta)$ is big. Then $X$ is rationally connected modulo  $\pnklt(X, \Delta)$.
\end{theorem}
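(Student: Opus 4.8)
The plan is to reduce the statement to the already-established result of Birkar–Park (\cite[Theorem 1.2]{BP}) together with Proposition \ref{prop-pnkltR}. By Proposition \ref{prop-pnkltR}, since $-(K_X+\Delta)$ is big, there is an effective $\R$-Cartier divisor $D$ with $D\sim_{\R}-(K_X+\Delta)$ and $\nklt(X,\Delta+D)=\pnklt(X,\Delta)$. The point is that $-(K_X+\Delta+D)\sim_{\R}0$, so the pair $(X,\Delta+D)$ is a (numerically) log Calabi–Yau pair, and now the rational connectedness statement for such pairs modulo their non-klt locus is precisely what the existing machinery delivers. Concretely, one perturbs: write $D\sim_{\R}A+B$ with $A$ ample and $B\geq 0$ effective, and for a sufficiently small rational $\epsilon>0$ set $\Gamma:=\Delta+(1-\epsilon)D+\epsilon B$, so that $-(K_X+\Gamma)\sim_{\R}\epsilon A$ is ample and, by exactly the argument already carried out in the proof of Proposition \ref{connedted-pnklt}, $\nklt(X,\Gamma)=\nklt(X,\Delta+D)=\pnklt(X,\Delta)$. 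Thus it suffices to show that $X$ is rationally connected modulo $\nklt(X,\Gamma)$ when $-(K_X+\Gamma)$ is ample, which is covered by \cite[Theorem 1.2]{BP} (whose conclusion is rational connectedness modulo $\nklt(X,\Gamma)\cup\nnef(-(K_X+\Gamma))$, and here $\nnef(-(K_X+\Gamma))=\emptyset$ since $-(K_X+\Gamma)$ is ample).

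If instead one wishes to give a self-contained argument in the spirit of \cite[Lemma 4.2]{BP} rather than quoting it as a black box, the steps would be as follows. Pass to a resolution $g:Y\to X$ of $(X,\Delta+D)$ and let $\pi:Y\dashrightarrow Z$ be the MRC-fibration; by Lemma \ref{dominate-rcc} it is enough to produce a subvariety of $Y$ dominating $Z$ that maps into $g^{-1}(\pnklt(X,\Delta))$, equivalently into $\operatorname{Supp}$ of the non-klt part of the log pullback. Since $X$ is uniruled (as $-(K_X+\Delta)$ is big), $Z\neq X$. Writing $K_Y+\Gamma_Y=g^*(K_X+\Gamma)+E$ with $\Gamma_Y,E\geq 0$ having no common components, one has $K_Y+\Gamma_Y-E\equiv -g^*(\epsilon A)$, which restricted to a general fiber $W$ of $\pi$ has $\kappa(W,m(K_Y+\Gamma_Y-E)|_W)\leq\kappa(W,-(\text{ample})|_W)$; on the other hand $W$ is rationally connected, so $-K_W$ is big and one needs the non-klt coefficients forced by negativity. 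The mechanism is Lemma \ref{cam-weakpos}: if $(Y,\Gamma_Y)$ were lc on $W$ with $K_Y+\Gamma_Y|_W$ pseudoeffective one would get positivity of $K_{Y/Z}+\Gamma_Y+\pi^*H$, contradicting the ampleness of $-g^*(K_X+\Gamma)$ downstairs. Running this contradiction forces a component of the non-klt locus of $(Y,\Gamma_Y)$ — equivalently of $\nklt(X,\Gamma)=\pnklt(X,\Delta)$ — to dominate $Z$, and then Lemma \ref{dominate-rcc} finishes.

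The main obstacle is the bookkeeping in the perturbation and in matching up the non-klt loci: one must check that shrinking $\epsilon$ does not lose or gain components of the non-klt locus, which is exactly the computation with multiplicities $\mult_{F_0}(-\epsilon f^*D+\epsilon f^*B+F)$ and the Zariski-decomposition inequality $f^*B\geq N$ appearing in Proposition \ref{connedted-pnklt}; the key inequality there, $\oa(F_0,X,\Delta)=\mult_{F_0}P-1>-1$ whenever $\mult_{F_0}P>0$, is what rules out spurious non-klt components outside $\pnklt(X,\Delta)$. A secondary technical point, if one runs the Campana-style argument directly rather than invoking \cite{BP}, is ensuring that the fiberwise non-lc locus genuinely propagates to a dominating family over $Z$ rather than sitting over a proper subset; this is handled by the general-fiber statement in Lemma \ref{cam-weakpos} combined with the fact that a general fiber of the MRC-fibration is smooth and rationally connected. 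Given that Proposition \ref{prop-pnkltR} has already been proved, the cleanest route is simply to cite \cite[Theorem 1.2]{BP} after the perturbation, so the real content has been front-loaded into Section \ref{sec-pnklr locus}.
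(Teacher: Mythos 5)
Your proposal is correct and follows essentially the same route as the paper: reduce via Proposition \ref{prop-pnkltQ}/\ref{prop-pnkltR} to the non-klt locus of $(X,\Delta+D)$, perturb with an ample summand of $D$, and then invoke the MRC-fibration machinery of Broustet--Pacienza. The only differences are cosmetic: the paper first passes to a $\Q$-divisor boundary via Corollary \ref{cor-r to q} (which you should do too before quoting \cite[Theorem 1.2]{BP}, as that result is stated for $\Q$-divisors) and then reruns the Campana-style argument directly through \cite[Theorem 3.5]{BP} and Lemma \ref{dominate-rcc} rather than citing \cite[Theorem 1.2]{BP} as a black box.
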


\begin{proof}
By Corollary \ref{cor-r to q}, we may assume that $\Delta$ is a $\Q$-divisor and $-(K_X+\Delta)$ is big $\Q$-Cartier. Using Proposition \ref{prop-pnkltQ}, we can take an effective $\Q$-Cartier divisor $D$ such that $D \sim_{\Q} -(K_X+\Delta)$ and $\nklt(X, \Delta+D)=\pnklt(X, \Delta)$.
Since $\Delta+D$ is big, we may write $\Delta+D \sim_{\Q} A+B$ for an ample $\Q$-divisor $A$ and an effective $\Q$-divisor $B$. Let $\Delta' := (1-\epsilon)(\Delta+D) + \epsilon A + \epsilon B$ for a sufficiently small rational number $\epsilon >0$. Then $K_X + \Delta' \sim_{\Q} 0$ and $\nklt(X, \Delta') \subseteq \nklt(X, \Delta+D)$.
For a log resolution $f: Y \to X$ of $(X, \Delta')$, we can write
$$
K_{Y} + \Gamma = f^*(K_X + \Delta') + E
$$
where $\Gamma$ and $E$ are effective divisors having no common components.
Note that since $E$ is $f$-exceptional, we have $0=\kappa(K_X+\Delta')=\kappa(K_Y+\Gamma)$.
Note also that $\nklt(X, \Delta') =f (\nklt(Y,\Gamma))$.
Now consider the MRC-fibration $\pi : Y \dashrightarrow Z$.
If $Z$ is a point, then $Y$ is rationally connected and so is $X$.
Let us assume that $\dim Z >0$.
Since $Z$ is not uniruled, it follows from \cite[Theorem 3.5]{BP} that $\nklt(Y, \Gamma)$ dominates $Z$.
Thus there exists an irreducible component $V'$ of $\pnklt(Y,\Gamma)$ which dominates $Z$.
Now Lemma \ref{dominate-rcc} implies that $X$ is rationally connected modulo $V=f(V')\subseteq\nklt(X, \Delta')$.
\end{proof}

\begin{corollary}
Let $(X,\Delta)$ be a potentially klt pair and $-K_X$ is big. Then $X$ is rationally connected.
\end{corollary}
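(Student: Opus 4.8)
The plan is to deduce this directly from Theorem \ref{thrm-RC via pnklt}, the only wrinkle being that the definition of a potentially klt pair only guarantees that $-(K_X+\Delta)$ is pseudoeffective, whereas Theorem \ref{thrm-RC via pnklt} requires bigness of the anticanonical class. I would get around this by replacing $\Delta$ with the zero divisor. Note first that the hypothesis ``$-K_X$ is big'' forces $K_X$ to be $\R$-Cartier, hence $\Delta=(K_X+\Delta)-K_X$ is also $\R$-Cartier and $(X,0)$ is a legitimate pair with $-(K_X+0)=-K_X$ big (in particular pseudoeffective), so all our machinery applies to it.

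Next I would check that $(X,0)$ is again potentially klt. Since $\Delta\ge 0$ is an effective $\R$-Cartier divisor with $-(K_X+0+\Delta)=-(K_X+\Delta)$ pseudoeffective, Lemma \ref{lem-oa>oa}(3) (applied with base pair $(X,0)$ and $\Delta'=\Delta$) gives $\pnklt(X,0)\subseteq\pnklt(X,\Delta)$. Because $(X,\Delta)$ is potentially klt, Lemma \ref{lem-P<1=klt}(2) yields $\pnklt(X,\Delta)=\emptyset$, whence $\pnklt(X,0)=\emptyset$, i.e.\ $(X,0)$ is potentially klt. Then, since $-(K_X+0)$ is big, Theorem \ref{thrm-RC via pnklt} applied to the pair $(X,0)$ shows that $X$ is rationally connected modulo $\pnklt(X,0)=\emptyset$, which by definition of ``rationally connected modulo $V$'' with $V=\emptyset$ means precisely that $X$ is rationally connected.

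There is essentially no hard step here; the only point to be careful about is not to feed $(X,\Delta)$ itself into Theorem \ref{thrm-RC via pnklt}, since $-(K_X+\Delta)$ need not be big, but rather to first pass to the boundary $0$. As an alternative I could instead invoke Theorem \ref{thrm-charft}, which already identifies such an $X$ as a variety of Fano type, and then cite that Fano type varieties are rationally connected (\cite{Z}, \cite{HM2}); however, routing the argument through Theorem \ref{thrm-RC via pnklt} keeps it self-contained within this section.
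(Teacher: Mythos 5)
Your proof is correct and follows essentially the same route as the paper: both arguments reduce to the boundary $0$ (the paper via Theorem \ref{thrm-charft}, whose relevant implication literally takes $\Gamma=0$ and invokes Lemma \ref{lem-oa>oa}) and then apply Theorem \ref{thrm-RC via pnklt} to a pair with big anticanonical divisor and empty potentially non-klt locus. The only difference is that the paper first passes to a small $\Q$-factorialization because Theorem \ref{thrm-charft} is stated for $\Q$-factorial varieties, whereas you bypass this by applying Lemma \ref{lem-oa>oa} directly after noting that $\Delta$ is $\R$-Cartier; this is a harmless simplification since Theorem \ref{thrm-RC via pnklt} does not assume $\Q$-factoriality.
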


\begin{proof}
Since $(X, \Delta)$ is a klt pair, we can take a small $\Q$-factorialization $q: X' \to X$. Note that $(X', q^{-1}_*\Delta)$ is a potentially klt pair and $-K_{X'}$ is big.
By Theorem \ref{thrm-charft}, we may pick an effective $\R$-divisor $\Gamma$ such that $-(K_{X'} + \Gamma)$ is big $\R$-Cartier and $\pnklt(X', \Gamma) = \emptyset$. By Theorem \ref{thrm-RC via pnklt}, $X'$ is rationally connected, so is $X$.
\end{proof}

It also follows from \cite[Theorem 1]{Z} or \cite[Theorem 1.2]{HM2} since $X$ is of Fano type by Theorem \ref{thrm-charft}.

\begin{corollary}\label{rccpnklt}
Let $(X, \Delta)$ be a pair such that $-(K_X+\Delta)$ is big.
If $\pnklt(X, \Delta)$ is rationally chain connected, then $X$ is also rationally chain connected.
\end{corollary}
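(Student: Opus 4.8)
The plan is to bootstrap from Theorem~\ref{thrm-RC via pnklt}, which already tells us that $X$ is rationally connected modulo $\pnklt(X,\Delta)$, and then to ``fill in'' the part inside $\pnklt(X,\Delta)$ using its assumed rational chain connectedness. First I would dispose of the trivial case: if $\pnklt(X,\Delta)=\emptyset$, then Theorem~\ref{thrm-RC via pnklt} says outright that $X$ is rationally connected, hence rationally chain connected, and there is nothing more to do. So from now on assume $\pnklt(X,\Delta)\neq\emptyset$.

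In that case the definition of ``rationally connected modulo $V$'' applied to Theorem~\ref{thrm-RC via pnklt} provides an irreducible component $C$ of $\pnklt(X,\Delta)$ such that every general point of $X$ lies on a rational curve meeting $C$. Given two general points $x_1,x_2\in X$, I would choose rational curves $R_1\ni x_1$ and $R_2\ni x_2$ with $R_i\cap C\neq\emptyset$ and pick points $c_i\in R_i\cap C\subseteq\pnklt(X,\Delta)$. Next I would connect $c_1$ and $c_2$ by a chain of rational curves lying inside $\pnklt(X,\Delta)$: by Corollary~\ref{cor-pnklt big closed}, $\pnklt(X,\Delta)$ is a Zariski closed subset of the proper variety $X$, hence proper, and a proper rationally chain connected variety has the property that \emph{any} two of its closed points are joined by a connected chain of rational curves contained in it (the standard strong form of rational chain connectedness for proper varieties, see \cite[IV.3.5]{K1}). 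Concatenating $R_1$, this chain from $c_1$ to $c_2$, and $R_2$ yields a chain of rational curves in $X$ joining $x_1$ and $x_2$; since $x_1,x_2$ were general, $X$ is rationally chain connected.

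The one place that requires genuine care, and which I expect to be the main obstacle, is precisely the passage from ``general points of $\pnklt(X,\Delta)$ are chain connected'' to ``\emph{all} closed points of $\pnklt(X,\Delta)$ are chain connected'': the points $c_1,c_2$ produced above need not be general in $C$, since a priori all the rational curves $R_x$ could meet $C$ in a small subset, so I cannot simply invoke the definition of rational chain connectedness and must use its upgraded form for proper varieties. Everything else is a purely formal concatenation of rational curves, and the possible reducibility or non-reducedness of $\pnklt(X,\Delta)$ is harmless because the entire argument takes place on the level of closed points of its underlying reduced scheme.
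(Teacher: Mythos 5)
Your argument is correct and is exactly the intended one: the paper's own proof is just ``Immediate from Theorem~\ref{thrm-RC via pnklt},'' and your write-up is the natural expansion of that, correctly concatenating the rational curves from general points of $X$ to a component of $\pnklt(X,\Delta)$ with chains inside $\pnklt(X,\Delta)$. The one subtlety you flag --- upgrading chain connectedness of the proper closed set $\pnklt(X,\Delta)$ from general pairs to arbitrary closed points via the standard properness argument --- is handled appropriately.
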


\begin{proof}
Immediate from Theorem \ref{thrm-RC via pnklt}.
\end{proof}

In particular, when $-K_X$ is $\Q$-Cartier and big, the rational chain connectedness of $\pnklt(X,0)$ implies the same property of $X$. In the surface case, the converse also holds.

\begin{proposition}\label{prop-rcc of pnklt&X}
Let $X$ be a normal projective $\Q$-Gorenstein surface such that $-K_X$ is big. Then $X$ is rationally chain connected if and only if $\pnklt(X, 0)$ is rationally chain connected.
\end{proposition}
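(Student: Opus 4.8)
The backward implication requires nothing new: if $\pnklt(X,0)$ is rationally chain connected, then so is $X$ by Corollary \ref{rccpnklt}. So the plan is to prove the forward implication, assuming $X$ is rationally chain connected, and I will first reduce it to a statement about single curves. By Corollary \ref{cor-pnklt big closed} and Proposition \ref{connedted-pnklt}, $\pnklt(X,0)$ is a connected closed set of dimension $\le 1$, and by Lemma \ref{lem-nklt+nnef=pnklt} it is contained in $\nklt(X,0)\cup\nnef(-K_X)$; since $X$ is a normal surface and $\Delta=0$, the set $\nklt(X,0)$ is finite. Writing $-K_X=P+N$ for the Zariski decomposition, one notes that the only divisorial valuation with center a prescribed curve $C$ is $\mathrm{ord}_C$, for which $\oa(\mathrm{ord}_C;X,0)=-\mathrm{mult}_C N$ (as $X$ is smooth at the generic point of $C$); hence the one–dimensional components of $\pnklt(X,0)$ are exactly the prime divisors $C$ with $\mathrm{mult}_C N\ge 1$. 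Thus $\pnklt(X,0)$ is a connected union of such curves together with finitely many points lying on them (or a single point, or empty), and to conclude it is rationally chain connected it suffices to show every curve $C$ with $\mathrm{mult}_C N\ge 1$ is a rational curve.

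Next I would pass to the minimal resolution $\tilde f:\tilde X\to X$, with discrepancy divisor $E_f\ge 0$. Then $-K_{\tilde X}=\tilde f^*(-K_X)+E_f$ is big, and since $\tilde f^*P$ is big, nef, and orthogonal to every $\tilde f$–exceptional prime divisor and to every $\tilde f^{-1}_* C_i$, the Hodge index theorem and Lemma \ref{lem-divzd} show that $-K_{\tilde X}=\tilde f^*P+(\tilde f^*N+E_f)$ is the Zariski decomposition; in particular $\tilde P:=P(-K_{\tilde X})=\tilde f^*P$ is big and nef, and $\tilde C:=\tilde f^{-1}_*C$ satisfies $\mathrm{mult}_{\tilde C}N(-K_{\tilde X})=\mathrm{mult}_C N\ge 1$. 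As $C$ is birational to $\tilde C$ it is enough to prove that $\tilde C$ is rational. Since $X$, hence $\tilde X$, is uniruled, we may assume $\tilde X\not\cong\P^2$ (otherwise $X\cong\P^2$ and $\pnklt(X,0)=\emptyset$), so $\tilde X$ admits a ruling $\pi:\tilde X\to B$ over a smooth curve with general fibre $F\cong\P^1$, $F^2=0$, all fibres being trees of $\P^1$'s. From $\tilde P\cdot\tilde C=0$, nefness of $\tilde P$, and $N(-K_{\tilde X})\ge\tilde C$ we get $0\le\tilde P\cdot F=2-N(-K_{\tilde X})\cdot F\le 2-\tilde C\cdot F$, so $\tilde C\cdot F\le 2$; and $\tilde C\cdot F=2$ would force $\tilde P\cdot F=0$, impossible since $\tilde P$ is big and $F^2=0$. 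Hence $\tilde C\cdot F\le 1$: either $\tilde C$ lies in a fibre, so it is a component of a tree of $\P^1$'s and is rational, or $\tilde C$ is a section of $\pi$, and then $\tilde C\cong B$, so we are done once $g(B)\ge 1$ is excluded.

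The heart of the argument is the remaining case, where $\tilde C$ is a section and $g(B)\ge 1$, and here I would derive a contradiction from the hypothesis that $X$ is rationally chain connected. Since $g(B)\ge 1$, every rational curve in $\tilde X$ lies in a fibre of $\pi$, so every rational curve in $X$ lies in $\tilde f(\pi^{-1}(b))$ for some $b\in B$. Joining two general points $x_1,x_2\in X$ with $\pi(\tilde f^{-1}(x_i))$ distinct by a chain of rational curves in $X$, the chain must pass from one fibre–image to another, so some $\tilde f$–exceptional curve $E_0$ meets two distinct fibres; therefore $\pi(E_0)=B$, so $E_0$ is non-rational, $E_0\cdot F\ge 1$, and, being an exceptional curve of the minimal resolution with $g(E_0)\ge 1$, it satisfies $a(E_0;X,0)\le -1$ (restrict $K_{\tilde X}+E_0$ to $E_0$ and use $a(E_j;X,0)\le 0$ for all $\tilde f$–exceptional $E_j$, together with $E_0^2<0$). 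Consequently $\mathrm{mult}_{E_0}N(-K_{\tilde X})\ge\mathrm{mult}_{E_0}E_f\ge 1$, so $N(-K_{\tilde X})\cdot F\ge(\tilde C\cdot F)+(E_0\cdot F)\ge 2$, which forces $\tilde P\cdot F\le 0$ and again contradicts the Hodge index theorem. Hence $g(B)=0$, $\tilde C$ is rational, and the proof is complete.

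I expect the last paragraph to be the main obstacle: turning "$X$ is rationally chain connected" into the existence of a non-rational $\tilde f$–exceptional curve $E_0$ dominating $B$ over which the fibres of $\pi$ are glued, and then extracting from $E_0$ a second prime divisor appearing in the negative part of $-K_{\tilde X}$ with coefficient at least $1$, so that the Hodge index estimate for $\tilde P\cdot F$ becomes violated. The earlier reductions (connectedness, the description of curve components of $\pnklt(X,0)$ via $\mathrm{mult}_C N$, and the computation of the Zariski decomposition of $-K_{\tilde X}$ on the minimal resolution) are routine given the results already established.
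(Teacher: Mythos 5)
Your proof is correct, and the hard step is handled by a genuinely different mechanism than in the paper. Both arguments agree on the skeleton: the backward implication is Corollary \ref{rccpnklt}; the forward one reduces, via connectedness (Proposition \ref{connedted-pnklt}) and the identification of the curve components of $\pnklt(X,0)$ with the prime divisors of coefficient $\geq 1$ in the negative part, to showing those curves are rational; and both pass to the minimal resolution and use a genus-zero fibration $\pi$ to see that such a curve is either a fibre component or a section. Where the paper then argues that the only possible non-rational component is a section $S$, that $S$ is contracted by the anticanonical morphism of the resolution, and that rational chain connectedness of $X$ forces $S$ to be contracted already by the minimal resolution, you instead extract from a chain of rational curves joining two general points lying over distinct points of $B$ a $\pi$-horizontal irreducible component $E_0$ of a connected $\tilde f$-exceptional fibre, prove $a(E_0;X,0)\le -1$ by adjunction, and reach the numerical contradiction $P(-K_{\tilde X})\cdot F\le 0$. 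Your route is longer but more self-contained: it avoids the anticanonical model entirely and replaces the paper's rather terse assertions (e.g.\ that the section is the only possible non-rational component and that it must be $g$-exceptional) by explicit intersection-theoretic estimates. The technical points you wave at are all fine: the identity $N(-K_{\tilde X})=N(\tilde f^*(-K_X))+E_f$ is exactly \cite[Lemma III.5.14]{Na}, already invoked in Proposition \ref{prop-invariant oa}, and it is what gives both $\mult_{\tilde C}N(-K_{\tilde X})\ge 1$ and $\mult_{E_0}N(-K_{\tilde X})\ge -a(E_0;X,0)\ge 1$; and $P(-K_{\tilde X})\cdot F>0$ because a big divisor meets every nonzero nef curve class positively.
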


\begin{proof}
The `if' direction is obvious by Corollary \ref{rccpnklt}.
To prove the `only if' direction, we assume that $X$ is rationally chain connected.
Let $g: Z \to X$ be the minimal resolution. Then the anticanonical morphism $h : Z \to Y$ factors through $X$, and the morphism $f: X \to Y$ with $h = f \circ g$ is also the anticanonical morphism.
We can assume that $Z \neq \P^2$. Then there is the canonical ruling $\pi : Z \to C$ to a smooth projective curve so that $Z$ is rationally chain connected modulo a section $S$ of $\pi$.
Note that $g(\pnklt(Z, 0))=\pnklt(X, 0)$. If $\pnklt(Z, 0)$ is rationally chain connected, then so is $\pnklt(X, 0)$. Now suppose that $\pnklt(Z, 0)$ is not rationally chain connected. Then $S$ is the only non-rational component of $\pnklt(Z, 0)$, but it is contracted by $h$.
Since $X$ is rationally chain connected, $S$ is also contracted by $g$.
Thus $\pnklt(X, 0)$ contains only rational curves.
By Proposition \ref{connedted-pnklt}, $\pnklt(X, 0)$ is rationally chain connected.
\end{proof}

Unfortunately, Proposition \ref{prop-rcc of pnklt&X} does not hold in higher dimensions by the following example.

\begin{example}\label{nonft}
We construct a smooth rational variety $X$ such that $-K_X$ is big and $\pnklt(X, 0)$ is a smooth elliptic curve. Consider a $2$-dimensional linear subspace $P$ in $\P^n$ for any integer $n \geq 3$, and take a smooth plane cubic curve $C \subseteq P \simeq \P^2$. Let $L$ be a hyperplane divisor of $\P^n$ and $\pi : X \rightarrow \P^n$ the blow-up at twelve general points $p_1, \ldots, p_{12}$ on $C$ in such a way that the line bundle $\mathcal{O}_C(p_1 + \cdots + p_{12} - 4L)$ is a non-torsion class in $\Pic^0(C)$ with exceptional divisors $E_1, \ldots, E_{12}$. Then we have
$$
-K_X = (n+1)\pi^{*}L - (n-1)E
$$
where $E:=E_1 + \cdots + E_{12}$.
First, we note that $4H-E$ is nef and big, but it is not semiample (see \cite[p.158]{posI}). Thus $X$ is not a Mori dream space, and in particular, $X$ is not of Fano type.
Now we claim that $-K_X$ is big and $\pnklt(X, 0)=C$ where we use the same notation for the strict transform of $C$.
To show the claim, consider a hyperplane $H$ and a cubic hypersurface $V$ in $\P^n$ containing $C$.
Since $-K_X \sim \pi^*2H + (n-1)(\pi^*H - E)$, it follows that $-K_X$ is big.
On the other hand, we have
$$
-K_X \sim (\pi^*V - E) + (n-2)(\pi^*H - E) \geq 0.
$$
By varying such hyperplanes and cubic hypersurfaces, we see that $\B(-K_X) \subseteq C$ so that $\nnef (-K_X) \subseteq C$. If $\nnef(-K_X) \neq C$, then $\nnef(-K_X)=\emptyset$ so that $\pnklt(X, 0)=\emptyset$. By Theorem \ref{thrm-charft}, $X$ is of Fano type, which is a contradiction. Thus $\nnef(-K_X)=C$. Now we can easily see that $\pnklt(X, 0)=C$.
\end{example}

Moreover, when $-K_X$ is only pseudoeffective, Proposition \ref{prop-rcc of pnklt&X} does not hold even in the surface case. For instance, if $f: \widetilde{S} \to S$ is a blow-up at a general point on a general fiber $C$ of a rational elliptic surface $S$, then $\pnklt(\widetilde{S}, 0)=f^{-1}_*C$ is not rational.

Finally, we prove Theorem \ref{main thrm2} (2). The proof is similar to \cite[Proof of Main Theorem]{Z2} and \cite[Proof of Theorem 1.6]{BP}.

\begin{theorem}\label{thrm-RC pseff}
Let $(X, \Delta)$ be a pair such that $-(K_X + \Delta)$ is pseudoeffective. Then either $X$ is rationally connected modulo $\pnklt(X, \Delta)$ or $\kappa(Z)=0$ where $\pi: X \dashrightarrow Z$ is the MRC-fibration of $X$.
For the latter case, $\pi$ is semistable in codimension two, and $X$ is rationally connected modulo every irreducible component of $\Delta$ if $\Delta \neq 0$.
\end{theorem}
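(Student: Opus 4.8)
The plan is to adapt the arguments of \cite[Proof of Main Theorem]{Z2} and \cite[Proof of Theorem 1.6]{BP}, the new input being that $\pnklt(X,\Delta)$ (rather than the larger locus $\nklt(X,\Delta)\cup\nnef(-(K_X+\Delta))$) is what obstructs rational connectedness, through Proposition \ref{pnklt-locus-pseff}. After reducing --- by Corollary \ref{cor-r to q} --- to the case where $\Delta$ is a $\Q$-divisor and $-(K_X+\Delta)$ is $\Q$-Cartier, I fix a resolution $f\colon Y\to X$ and blow up further so that the MRC fibration $\pi\colon Y\to Z$ is a morphism with $Y,Z$ smooth and $\pi_*\mathcal{O}_Y=\mathcal{O}_Z$; since $\kappa(Z)$ is a birational invariant this is harmless. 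If $\dim Z=0$ then $X$ is rationally connected and we are done, so assume $\dim Z>0$ and suppose, toward the dichotomy, that $X$ is \emph{not} rationally connected modulo $\pnklt(X,\Delta)$. By Lemma \ref{dominate-rcc} no subvariety of $Y$ contained in $f^{-1}(\pnklt(X,\Delta))$ dominates $Z$, so $\pi(f^{-1}(\pnklt(X,\Delta)))$ is a proper closed subset of $Z$ and, for general $z\in Z$, the fibre $W=\pi^{-1}(z)$ is a smooth rationally connected variety disjoint from $f^{-1}(\pnklt(X,\Delta))$.

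For each small rational $\epsilon>0$ I fix an ample $\Q$-divisor $A$ on $X$ and apply Proposition \ref{pnklt-locus-pseff} to $-(K_X+\Delta)+\epsilon A$, obtaining an effective $\Q$-Cartier divisor $D_\epsilon\sim_\Q -(K_X+\Delta)+\epsilon A$ with $\nklt(X,\Delta+D_\epsilon)\subseteq\pnklt(X,\Delta)$. On a log resolution I write $K_Y+\Gamma_\epsilon=f^*(K_X+\Delta+D_\epsilon)+E_\epsilon$, where $\Gamma_\epsilon\ge 0$ and $E_\epsilon\ge 0$ is $f$-exceptional with no common component with $\Gamma_\epsilon$; then $\nklt(Y,\Gamma_\epsilon)\subseteq f^{-1}(\pnklt(X,\Delta))$, so $(Y,\Gamma_\epsilon)$ is klt near $W$, and $(K_Y+\Gamma_\epsilon)|_W\sim_\Q\epsilon\,(f^*A)|_W+E_\epsilon|_W$ is big because $f$ is birational along a general $W$. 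Hence $\kappa\big(W,(K_Y+\Gamma_\epsilon)|_W\big)=\dim W\ge 0$, so Lemma \ref{cam-weakpos} applies to $\pi$ and $\Gamma_\epsilon$: for every ample $\Q$-divisor $H$ on $Z$ the divisor $K_{Y/Z}+\Gamma_\epsilon+\pi^*H$ has positive Iitaka dimension. As $K_{Y/Z}+\Gamma_\epsilon\sim_\Q \epsilon f^*A+E_\epsilon-\pi^*K_Z$, this produces an effective $\Q$-divisor $G_\epsilon\sim_\Q \epsilon f^*A+E_\epsilon+\pi^*(H-K_Z)$.

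Pushing $G_\epsilon$ forward by $f$ kills $E_\epsilon$, so $\epsilon A+f_*\pi^*(H-K_Z)\sim_\Q f_*G_\epsilon\ge 0$; letting $\epsilon\to 0$ shows $f_*\pi^*(H-K_Z)$ is pseudoeffective on $X$. Intersecting with general complete intersection curves on $X$ --- and, to reach arbitrary movable curve classes on $Z$, on suitable birational models of $X$ dominating birational models of $Z$ --- and invoking the duality between the pseudoeffective cone and the cone of movable curves, one deduces that $H-K_Z$ is pseudoeffective on $Z$ for every ample $H$, hence that $-K_Z$ is pseudoeffective. On the other hand $Z$ is not uniruled by \cite[Corollary 1.4]{GHS}, so $K_Z$ is pseudoeffective by the theorem of Boucksom, Demailly, P\u{a}un and Peternell. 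Since both $K_Z$ and $-K_Z$ are pseudoeffective, $K_Z\equiv 0$, and therefore $\kappa(Z)=0$. This establishes the dichotomy.

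Finally, suppose $\kappa(Z)=0$ and $\Delta\ne 0$. Then $X$ is uniruled: otherwise $K_X$ is pseudoeffective, which together with $-(K_X+\Delta)$ pseudoeffective forces $K_X+\Delta\equiv 0$ and then $\Delta\equiv 0$, contradicting $\Delta\ne 0$; so $\dim Z<\dim X$. Since $K_Z\equiv 0$, the negative part of $K_{Y/Z}+\Gamma_\epsilon+\pi^*H$ is $f$-exceptional, and a closer analysis of $G_\epsilon$ along the generic points of the prime divisors of $Z$, i.e.\ of the fibres of $\pi$ in codimension one, shows --- as in \cite{Z2} --- that $\pi$ is semistable in codimension two and that no $\pi$-vertical, non-$f$-exceptional divisor (in particular the strict transform of any $\pi$-vertical component of $\Delta$) can appear in $\Gamma_\epsilon$; hence every irreducible component of $\Delta$ dominates $Z$, and Lemma \ref{dominate-rcc} yields that $X$ is rationally connected modulo any such component. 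The genuine obstacle is the descent in the third paragraph: making the passage from pseudoeffectivity on $X$ to pseudoeffectivity of $H-K_Z$ on $Z$ uniform in $\epsilon$, which requires controlling the $f$-exceptional terms $E_\epsilon$ (only their horizontal part is automatically bounded; the remaining part is $\pi$-vertical precisely because its centres lie in $\pnklt(X,\Delta)$, which does not dominate $Z$) and choosing the auxiliary moving curves compatibly with all birational models of $Z$; the rest is bookkeeping with discrepancies and the divisorial Zariski decomposition.
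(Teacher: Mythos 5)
Your skeleton is the paper's: perturb by ample divisors tending to $0$, apply Proposition \ref{pnklt-locus-pseff} to get boundaries whose non-klt loci sit inside $\pnklt(X,\Delta)$, feed them into Campana's weak positivity (Lemma \ref{cam-weakpos}), and play the output against the non-uniruledness of $Z$. But the final descent --- which you yourself flag as ``the genuine obstacle'' --- is a genuine gap, and the statement you are steering toward is false. You aim to show that $H-K_Z$ is pseudoeffective for every ample $H$, hence that $-K_Z$ is pseudoeffective and $K_Z\equiv 0$. The $Z$ in the construction is merely \emph{some} smooth birational model of the MRC-quotient (obtained by blowing up so that $\pi$ becomes a morphism), and for such a model $K_Z\equiv 0$ can fail even when $\kappa(Z)=0$: if $Z$ is a K3 surface blown up at a point, then $-K_Z=-E$ is not pseudoeffective. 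Correspondingly the method cannot deliver what you ask of it: BDPP duality requires $(H-K_Z)\cdot\gamma\ge 0$ for \emph{all} movable classes $\gamma$ on $Z$, whereas intersecting $G_\epsilon$ with (strict transforms of) general complete intersection curves of $X$ controls only the single class $\pi_*C$. The paper extracts exactly the weaker, correct conclusion: for curves $C$ avoiding the exceptional locus one gets $\pi_*C\cdot K_Z\le C\cdot A_i'\to 0$, while $\pi_*C\cdot K_Z\ge 0$ since $Z$ is not uniruled; hence $K_Z$ meets a covering family trivially, the positive part of the divisorial Zariski decomposition of $K_Z$ is numerically trivial by \cite[Theorem 9.8]{BDPP}, and $\kappa(Z)=0$ follows from \cite[Corollary V.4.9]{Na}. (Also, your opening reduction via Corollary \ref{cor-r to q} is not available --- that corollary assumes $-(K_X+\Delta)$ is big --- but it is also unnecessary, since Proposition \ref{pnklt-locus-pseff} already handles $\R$-divisors.)

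The second half of the statement --- semistability of $\pi$ in codimension two and the fact that every component of $\Delta$ dominates $Z$ --- is dispatched in your write-up by ``a closer analysis of $G_\epsilon$ \ldots shows, as in \cite{Z2}.'' That is where the actual work lies and where the paper's proof necessarily departs from your streamlined version: one performs semistable reduction in codimension two (a finite cover $p:\widetilde Z\to Z$ with induced fibration $\widetilde\pi:\widetilde Y\to \widetilde Z$), isolates the effective $\widetilde\pi$-vertical divisor $V$ in the comparison of $K_{\widetilde Y/\widetilde Z}+\widetilde\Gamma_i'$ with the pullback, checks that $V$ is independent of $i$, and then lets $A_i'\to 0$ in the inequality $C\cdot\pi^*K_Z\le C\cdot(A_i'-q_*V)$ to force $(f\circ q)_*V=0$. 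It is precisely this vanishing that shows no component of $\Delta$ is $\pi$-vertical and that $\pi$ is semistable in codimension two; without it neither assertion is established. In short: right strategy, but the two load-bearing steps (the passage from weak positivity on $Y$ to a statement about $K_Z$, and the vertical analysis via semistable reduction) are respectively aimed at a false target and left unproved.
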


\begin{proof}
Let $f : Y \to X$ be a log resolution of $(X, \Delta)$ and $\pi : Y \dashrightarrow Z$ the MRC-fibration.
Possibly by taking further blow-ups, we may assume that $Z$ is smooth and $\pi : Y \to Z$ is a morphism.
We can easily take a sequence $\{ A_i \}$ of ample $\R$-divisors such that $-(K_X+\Delta)+A_i$ is big $\Q$-Cartier for every $i$ and $\lim_{i \to \infty} A_i = 0$.
For each $i$, by Proposition \ref{pnklt-locus-pseff}, there is an effective $\Q$-Cartier divisor $D_i$ such that $D_i \sim_{\Q} -(K_X + \Delta) + A_i$ and $\nklt(X, \Delta+D_i) \subseteq \pnklt(X, \Delta)$.
We now write
$$
K_Y + \Gamma_i = f^*(K_X+\Delta+D_i)+E_i
$$
where $\Gamma_i$ and $E_i$ are effective divisors having no common components.
Note that $\Gamma_i \geq f^{-1}_*\Delta$ and $E_i$ is $f$-exceptional.
For an ample $\Q$-divisor $H$ on $Z$, we can take an ample $\Q$-divisor $A$ on $X$ such that $f^*A - \pi^*H$ is again an ample $\Q$-divisor on $Y$. Choose an effective ample $\Q$-divisor $A'$ on $Y$ such that $A' \sim_{\Q} f^*A - \pi^*H$ and $\nklt(Y, \Gamma_i) = \nklt(Y, \Gamma_i +\frac{1}{i}A')$ for all $i$.
Let $\Gamma_i':=\Gamma_i + \frac{1}{i}A'$ and $A_i':=A_i + \frac{1}{i}A$.
We still have $\lim_{i \to \infty} A_i' = 0$.

Note that for each $i$,
$$
f(\nklt(Y, \Gamma_i'))=f(\nklt(Y, \Gamma_i))=\nklt(X, \Delta+D_i)\subseteq\pnklt(X, \Delta).
$$
If $\nklt(Y, \Gamma_i')$ dominates $Z$ or $Z$ is a point,
then $X$ is rationally connected modulo $\pnklt(X, \Delta)$ by Lemma \ref{dominate-rcc}.
Thus we assume that $\nklt(Y, \Gamma_i')$ does not dominate $Z$ and $\dim Z >0$.
We first note that $(Y, \Gamma_i')$ is klt on a general fiber of $\pi$.
Furthermore, by \cite[Corollary 1.4]{GHS}, $Z$ is not uniruled.

Now we prove that $\kappa(Z) =0$ and
if $\Delta \neq 0$, then every component of $f^{-1}_*\Delta$ dominates $Z$.
Using the semistable reduction theorem, covering trick, and the flattening of $\pi$, we can find a finite morphism $p: \widetilde{Z} \to Z$ and the induced morphism $\widetilde{\pi} : \widetilde{Y} \to \widetilde{Z}$ from a resolution $\widetilde{Y} \to Y \times_Z \widetilde{Z}$ such that it is semistable in codimension two. Let $q: \widetilde{Y} \to Y$ be the induced morphism (see \cite[Proof of Proposition 1]{Z}).
$$
\xymatrix{
X& Y \ar[l]_{f} \ar[d]_{\pi}& \widetilde{Y} \ar[l]_q \ar[d]^{\widetilde{\pi}} \\
&Z&\widetilde{Z} \ar[l]_p .
}
$$
We have
$$
K_{Y/Z} + \Gamma_i' \sim_{\Q} f^*A_i' +E_i - \pi^*(K_Z + \frac{1}{i}H).
$$
Then by \cite[Proof of Proposition 1]{Z}, we obtain
$$
K_{\widetilde{Y}/\widetilde{Z}} + \widetilde{\Gamma}_i' \sim_{\Q} \widetilde{E}_i - \widetilde{\pi}^*p^*(K_Z +\frac{1}{i} H) + q^*f^*A_i' - V_i
$$
where
\begin{enumerate}
\item $\widetilde{E}_i$ is $f \circ q$-exceptional,
\item $V_i$ is an effective divisor which is $\widetilde{\pi}$-vertical,
\item If an irreducible component $F$ of $\widetilde{E}_i$ is $\widetilde{\pi}$-horizontal, then $\mult_{F} \widetilde{E}_i \geq 0$,
\item $(\widetilde{Y}, \widetilde{\Gamma}_i')$ is klt on a general fiber of $\widetilde{\pi}$,
\item $p^*H = \widetilde{H}$ is ample, and
\item If a divisor $G$ on $\widetilde{Y}$ such that the codimension $\widetilde{\pi}(G)$ is at least 2, then $G$ is $f \circ q$-exceptional.
\end{enumerate}
Since $(\widetilde{Y}, \widetilde{\Gamma}_i' + V_i)$ is klt on a general fiber of $\widetilde{\pi}$, we can assume that $V_i$ contains only $\widetilde{\pi}$-vertical irreducible components of $(f \circ q)_*^{-1}(\Delta)$. Thus $V_i$ does not depend on $i$, so we put $V:=V_i$.
We rewrite
$$
K_{\widetilde{Y}/\widetilde{Z}} + \widetilde{\Gamma}_i' \sim_{\Q} \widetilde{E}_i - \widetilde{\pi}^*p^*(K_Z +\frac{1}{i} H) + q^*f^*A_i' - V.
$$

If we denote by $W$ a general fiber of $\widetilde{\pi}$,
we obtain $(K_{\widetilde{Y}} + \widetilde{\Gamma}_i + V)|_W \sim_{\Q} \widetilde{E}_i + q^*f^* A_i'$ which is effective.
By Lemma \ref{cam-weakpos}, we have $h^0(K_{\widetilde{Y}/\widetilde{Z}}+\widetilde{\Gamma}_i' + V + \frac{1}{i}\widetilde{\pi}^*\widetilde{H}) > 0$, thus $\widetilde{E} - \widetilde{\pi}^* p^* K_Z + q^* f^* A_i' - V$ is effective. We can choose a family of general complete intersection curves $\widetilde{C}$ on $\widetilde{Y}$ such that $\widetilde{C}$ does not intersect with the exceptional locus of $f \circ q : \widetilde{Y} \to X$. Then we get
$$
\widetilde{C} \cdot \widetilde{\pi}^* p^* K_Z \leq \widetilde{C} \cdot (q^*f^* A_i' - V).
$$
If $C \cdot \pi'^* p^* K_Z <0$, then by \cite[Corollary 0.3]{BDPP}, $Z$ is uniruled, which is a contradiction.
Thus $C' \cdot \pi'^* p^* K_Z =0$.
There exists a covering family of curves $C$ on $Y$ such that $C \cdot \pi^* K_Z \leq C \cdot (A_i' - q_* V)$.
As $i \to \infty$ (so that $A_i' \to 0$), we see that $C \cdot \pi^*K_Z=0$, $(f \circ q)_* V=0$, and the MRC-quotient $X \dashrightarrow Z$ is semistable in codimension two.
If  $\Delta \neq 0$, then since $V$ cannot contain any irreducible component of $(f\circ q)_*^{-1}(\Delta)$, every irreducible component of $\Delta$ dominates $Z$. Now by considering the divisorial Zariski decomposition of $K_Z$ and by applying \cite[Theorem 9.8]{BDPP} and \cite[Corollary V.4.9]{Na} as in \cite[Proof of Main Theorem]{Z2} or \cite[Proof of Theorem 1.6]{BP}, we conclude that $\kappa(Z)=0$.
\end{proof}

\begin{remark}
In Theorem \ref{thrm-RC pseff}, we do not assume that $X$ is uniruled. However, if $\Delta \neq 0$ and $-(K_X+\Delta)$ is pseudoeffective, then $X$ is uniruled by \cite[Corollary 0.3]{BDPP}.
\end{remark}

\section*{Acknowledgements}
We would like to thank professors Vyacheslav V. Shokurov for valuable suggestions, Florin Ambro for interesting discussions, and Yoshinori Gongyo for useful comments.
We wish to express our deep gratitude to Dr. Zhengyu Hu for kindly pointing out a mistake in previous version of the proof of Proposition \ref{prop-pnkltQ}.
S. Choi was supported by IBS-R003-D1.



\begin{thebibliography}{ELMNP1}


\bibitem[BCHM]{BCHM} C. Birkar, P. Cascini, C. Hacon, and J. McKernan, \textit{Existence of minimal models for varieties of log general type}, J. Amer. Math. Soc. \textbf{23} (2010), 405-468.

\bibitem[BBP]{BBP} S. Boucksom, A. Broustet, and G. Pacienza,
\textit{Uniruledness of  stable base loci of adjoint linear systems via Mori Theory},
Math.Z. \textbf{275} (2013), 499-507.

\bibitem[BDPP]{BDPP} S. Boucksom, J.-P. Demailly, M. P\u{a}un, and T. Peternell, \textit{The pseudo-effective cone of a compact Kahler manifold and varieties of negative Kodaira dimension}, J. Algebraic Geom. \textbf{22} (2013), 201-248.

\bibitem[BP]{BP} A. Broustet and G. Pacienza, \textit{Rationally connectedness modulo the non-nef locus},
Comment. Math. Helv. \textbf{86} (2011), 593-607.

\bibitem[B]{B} M. Brown, \textit{Singularities of Cox rings of Fano varieties}, J. Math. Pures Appl. (9) \textbf{99} (2013), 655-667.

\bibitem[C1]{C} F. Campana, \textit{Connexit\'{e} rationnelle des vari\'{e}t\'{e}s de Fano}, Ann. Sci. \'{E}cole Norm. Sup. (4) \textbf{25} (1992), 539-545.

\bibitem[C2]{Ca} F. Campana, \textit{Orbifolds, special varieties and classification theory}, Ann. Inst. Fourier (Grenoble) \textbf{54} (2004), 499-630.

\bibitem[CG]{CG} P. Cascini and Y. Gongyo, \textit{On the anti-canonical ring and varieties of Fano type},  Saitama Math J. \textbf{30} (2013), 27-38.

\bibitem[CHP]{CHP} S. Choi, D. Hwang, and J. Park, \textit{Factorization of anticanonical maps of Fano type varieties}, to appear in Int. Math. Res. Not.

\bibitem[Cu]{Cu} S.D. Cutkosky, \textit{Zariski decomposition of divisors on algebraic varieties}, Duke Math. J. \textbf{53} (1986), 148-156.


\bibitem[ELMNP]{ELMNP2} L. Ein, R. Lazarsfeld, M. Musta\c{t}\u{a}, M. Nakamaye, and M. Popa,
\textit{Asymptotic invariants of base loci}, Ann. Inst. Fourier, \textbf{56} (2006), 1701-1734.

\bibitem[GOST]{GOST} Y. Gongyo, S. Okawa, A. Sannai, and S. Takagi, \textit{Characterization of varieties of Fano type via singularities of Cox rings},  J. Algebraic Geom. \textbf{24} (2015), 159-182.

\bibitem[GHS]{GHS} T. Graber, J. Harris, and J. Starr, \textit{Families of rationally connected varieties}, J. Amer.
Math. Soc. \textbf{16} (2003), 57-67.

\bibitem[HM]{HM2} C. Hacon and J. McKernan, \textit{On Shokurov's rational connectedness conjecture}, Duke Math. J. \textbf{138} (2007), 119-136.

\bibitem[HP1]{HP1} D. Hwang and J. Park, \textit{Cox rings of rational surfaces and redundant blow-ups}, preprint, to appear in Trans. Amer. Math. Soc.

\bibitem[HP2]{HP2} D. Hwang and J. Park, \textit{Characterization of log del Pezzo pairs via anticanonical models},  Math. Z. \textbf{280} (2015), 211-229.

\bibitem[K]{K1} J. Koll\'{a}r, \textit{Rational curves on algebraic varieties},  Ergebnisse der Mathematik und ihrer Grenzgebiete. 3. Folge. A Series of Modern Surveys in Mathematics \textbf{32}. Springer-Verlag, Berlin, (1996).


\bibitem[KMM]{KMM} J. Koll\'{a}r, Y. Miyaoka, and S. Mori, \textit{Rational connectedness and boundedness of Fano manifolds}, J. Differential Geom. \textbf{36} (1992), 765-779.

\bibitem[KM]{KM} J. Koll\'{a}r and S. Mori, \textit{Birational geometry of algebraic varieties}, Cambridge Tracts in Math. \textbf{134}, Cambridge Univ. Press, Cambridge (1998).

\bibitem[K+]{Ko+} J. Koll\'{a}r, et al. \textit{Flips and abundance for algebraic threefolds}, A summer
seminar at the University of Utah, Salt Lake City, 1991, Ast\'{e}risque \textbf{211} (1992).


\bibitem[La1]{posI} R. Lazarsfeld, \textit{Positivity in algebraic geometry I. Classical Setting: line bundles and linear series}, A Series of Modern Surveys in Math. \textbf{48}, Springer-Verlag, Berlin, (2004).

\bibitem[La2]{posII} R. Lazarsfeld, \textit{Positivity in algebraic geometry II. Positivity for vector bundles and multiplier ideals}, A Series of Modern Surveys in Math. \textbf{49}, Springer-Verlag, Berlin, (2004).


\bibitem[Le]{John} J. Lesieutre, \textit{The diminished base locus is not always closed}, to appear in Compositio Math.

\bibitem[M]{M} S. Mori, \textit{Classification of higher-dimensional varieties}, Algebraic Geometry, Bowdoin, 1985, Proc. Symp. Pure Math. \textbf{46} Amer. Math. Soc., Providence, RI, 269-332.

\bibitem[Na]{Na} N. Nakayama, \textit{Zariski-decomposition and abundance}, MSJ Memoirs \textbf{14}, Mathematical Society of Japan, Tokyo (2004).

\bibitem[Ni]{Nik} V.V. Nikulin, \textit{A remark on algebraic surfaces with polyhedral Mori cone}, Nagoya Math. J. \textbf{157} (2000), 73-92.

\bibitem[P]{P} Y. Prokhorov,
\textit{On the Zariski decomposition problem}, Tr. Mat. Inst. Steklova \textbf{240} (2003) no. Biratsion. Geom. Linein. Sist. Konechno Porozhdennye Algebry, 43-72 (Russian, with Russian summary); English transl., Proc. Steklov Inst. Math. \textbf{1 (240)} (2003), 37-65.


\bibitem[SS]{SS} K. Schwede and K.E. Smith, \textit{Globally F-regular and log Fano varieties}, Adv. Math. \textbf{224} (2010), 863-894.

\bibitem[S]{S} V.V. Shokurov, \textit{Prelimiting flips}, Tr. Mat. Inst. Steklova \textbf{240} (2003) no. Biratsion. Geom. Linein. Sist. Konechno Porozhdennye Algebry, 82-219 (Russian, with Russian summary); English transl., Proc. Steklov Inst. Math. \textbf{1 (240)} (2003), 75-213.

\bibitem[Z1]{Z2} Q. Zhang, \textit{On projective varieties with nef anticanonical divisors}, Math. Ann. \textbf{332} (2005), 697-703.

\bibitem[Z2]{Z} Q. Zhang, \textit{Rational connectedness of log $\Q$-Fano varieties}, J. Reine Angew. Math. \textbf{590} (2006), 131-142.

\end{thebibliography}
\end{document}